\DeclareMathAlphabet{\mathpzc}{OT1}{pzc}{m}{it}
\numberwithin{equation}{section}
\begin{document}

\allowdisplaybreaks

\theoremstyle{plain}

\newtheorem{theorem}{Theorem}[section]
\newtheorem{lemma}[theorem]{Lemma}
\newtheorem{example}[theorem]{Example}
\newtheorem{proposition}[theorem]{Proposition}
\newtheorem{corollary}[theorem]{Corollary}
\newtheorem{definition}[theorem]{Definition}
\newtheorem{Ass}[theorem]{Assumption}
\newtheorem{condition}[theorem]{Condition}
\theoremstyle{definition}
\newtheorem{remark}[theorem]{Remark}
\newtheorem{SA}[theorem]{Standing Assumption}
\newtheorem*{discussion}{Discussion}

\newcommand{\of}{[\hspace{-0.06cm}[}
\newcommand{\gs}{]\hspace{-0.06cm}]}

\newcommand\llambda{{\mathchoice
		{\lambda\mkern-4.5mu{\raisebox{.4ex}{\scriptsize$\backslash$}}}
		{\lambda\mkern-4.83mu{\raisebox{.4ex}{\scriptsize$\backslash$}}}
		{\lambda\mkern-4.5mu{\raisebox{.2ex}{\footnotesize$\scriptscriptstyle\backslash$}}}
		{\lambda\mkern-5.0mu{\raisebox{.2ex}{\tiny$\scriptscriptstyle\backslash$}}}}}

\newcommand{\1}{\mathds{1}}

\newcommand{\F}{\mathbf{F}}
\newcommand{\G}{\mathbf{G}}

\newcommand{\B}{\mathbf{B}}

\newcommand{\M}{\mathcal{M}}

\newcommand{\la}{\langle}
\newcommand{\ra}{\rangle}

\newcommand{\lle}{\langle\hspace{-0.085cm}\langle}
\newcommand{\rre}{\rangle\hspace{-0.085cm}\rangle}
\newcommand{\blle}{\Big\langle\hspace{-0.155cm}\Big\langle}
\newcommand{\brre}{\Big\rangle\hspace{-0.155cm}\Big\rangle}

\newcommand{\X}{\mathsf{X}}

\newcommand{\tr}{\operatorname{tr}}
\newcommand{\N}{{\mathbb{N}}}
\newcommand{\cadlag}{c\`adl\`ag }
\newcommand{\on}{\operatorname}
\newcommand{\oP}{\overline{P}}
\newcommand{\oQ}{\overline{Q}}
\newcommand{\oO}{\mathcal{O}}
\newcommand{\D}{D(\mathbb{R}_+; \mathbb{R})}
\newcommand{\cT}{\mathcal{T}}
\newcommand{\cE}{\mathcal{E}}
\newcommand{\cC}{\mathcal{C}}
\newcommand{\usc}{\textit{USC}}
\newcommand{\cI}{\mathcal{I}}
\newcommand{\cB}{\mathcal{B}}
\newcommand{\C}{\mathsf{C}}
\newcommand{\cK}{\mathcal{K}}
\newcommand{\oconv}{\overline{\operatorname{co}}\hspace{0.1cm}}
\renewcommand{\B}{\mathbb{B}}
\newcommand{\m}{\mathbb{M}}
\newcommand{\K}{K}

\renewcommand{\epsilon}{\varepsilon}

\newcommand{\fPs}{\mathfrak{P}_{\textup{sem}}}
\newcommand{\fPss}{\mathfrak{P}_{\textup{sem}}^{\textup{sp}}}
\newcommand{\fSss}{\mathcal{S}^{\textup{sp}}}
\newcommand{\fPas}{\mathfrak{P}^{\textup{ac}}_{\textup{sp\hspace{0.04cm}sem}}}
\newcommand{\fSac}{\mathcal{S}^{\textup{sp}}_{\textup{ac}}}
\newcommand{\rrarrow}{\twoheadrightarrow}
\newcommand{\cA}{\mathcal{C}}
\newcommand{\cR}{\mathcal{R}}
\newcommand{\cQ}{\mathcal{Q}}
\newcommand{\cF}{\mathcal{F}}
\newcommand{\bth}{\overset{\leftarrow}\theta}
\renewcommand{\th}{\theta}
\newcommand{\A}{\mathsf{A}}
\newcommand{\z}{\mathfrak{z}}
\newcommand{\p}{\mathsf{P}}
\newcommand{\q}{\mathsf{Q}}
\renewcommand{\r}{\mathsf{R}}

\newcommand{\bR}{\mathbb{R}}
\newcommand{\nnabla}{\nabla}
\newcommand{\f}{\mathfrak{f}}
\newcommand{\g}{\mathfrak{g}}
\newcommand{\Y}{\mathfrak{X}}
\newcommand{\bK}{\mathbb{K}}
\newcommand{\bC}{\mathbb{C}}
\newcommand{\tn}{|\!|\!|}
\newcommand{\bx}{\mathsf{x}}
\newcommand{\bX}{\mathsf{X}}
\newcommand{\cU}{\mathcal{U}}

\renewcommand{\emptyset}{\varnothing}

\makeatletter
\@namedef{subjclassname@2020}{%
	\textup{2020} Mathematics Subject Classification}
\makeatother

 \title{Stochastic Processes under Parameter Uncertainty} 
\author[D. Criens]{David Criens}
\address{Albert-Ludwigs University of Freiburg, Ernst-Zermelo-Str. 1, 79104 Freiburg, Germany}
\email{david.criens@stochastik.uni-freiburg.de}

\keywords{
	nonlinear stochastic process; martingale problem; stochastic partial differential equations; sublinear expectation; nonlinear expectation; Knightian uncertainty}

\subjclass[2020]{60G65, 60G44, 60G07, 60H15, 60J25}

\thanks{The author acknowledges financial support from the DFG project SCHM 2160/15-1.}
\date{\today}

\maketitle

\begin{abstract}
In this paper we study a family of nonlinear (conditional) expectations that can be understood as a stochastic process with uncertain parameters.
We develop a general framework which can be seen as a version of the martingale problem method of Stroock and Varadhan with parameter uncertainty.
To illustrate our methodology, we explain how it can be used to model nonlinear L\'evy processes in the sense of Neufeld and Nutz, and we introduce the new class of stochastic partial differential equations under parameter uncertainty. Moreover, we study properties of the nonlinear expectations. We prove the dynamic programming principle, i.e., the tower property, and we establish conditions for the (strong) \(\usc_b\)--Feller property and a strong Markov selection principle. 
\end{abstract}


\section{Introduction}
We study conditional nonlinear expectations on the path space \(\Omega\) of continuous or \cadlag functions from \(\bR_+\) into a Polish space \(F\). More specifically, we are interested in nonlinear expectations of the form
\begin{align} \label{eq: upper exp}
\mathcal{E}_t (\psi) (\omega) = \sup_{P \in \cC(t, \omega)} E^P \big[ \psi \big],
\end{align}
where \(\psi\) is a suitable real-valued function on \(\Omega\) and \(\cC(t, \omega)\) is a set of laws of stochastic processes whose paths coincide with \(\omega \in \Omega\) till time \(t \in \bR_+\).
We think of the nonlinear conditional expectation \(\cE\) as a \emph{nonlinear stochastic process} or a {\em stochastic process under parameter uncertainty}.

The systematic study of nonlinear stochastic processes started with the seminal work of Peng \cite{peng2007g, peng2008multi} on the \(G\)-Brownian motion. 
More recently, larger classes of nonlinear semimartingales have been investigated in \cite{CN22a, CN22b, CN22c, fadina2019affine, hol16, hu2021g, K19, neufeld2017nonlinear}. 
At this point we also highlight the articles \cite{ElKa15, neufeld2014measurability, NVH} where abstract tools for the study were developed.
Next to the measure theoretic approach based on the nonlinear expectation \eqref{eq: upper exp}, nonlinear Markov processes have also been constructed by analytic methods via nonlinear semigroups, see \cite{denk2020semigroup,NMC,NR}. This construction is based on fundamental ideas of Nisio \cite{nisio}. 
In the recent paper \cite{K21}, it was shown that the two approaches are equivalent for the class of nonlinear L\'evy processes from \cite{denk2020semigroup,neufeld2017nonlinear}.
A key property of the construction via \eqref{eq: upper exp} is its relation to a class of stochastic processes given through the set~\(\cC\). This connection opens the door to analyze properties of \(\mathcal{E}\) with powerful tools from probability theory such as stochastic calculus. 

In this paper, we propose a new framework for the set \(\cC\) which extends the nonlinear expectation approach to nonlinear stochastic processes beyond semimartingales. 
The framework is inspired by the martingale problem method of Stroock and Varadhan~\cite{stroock2007multidimensional}. Consider the family 
\begin{align} \label{eq: test processes}
f (X) - \int_0^\cdot g (X_s) ds, \quad (f, g) \in A \subset C_b (F; \bR) \times C_b (F; \bR),
\end{align}
of test processes, where \(X\) is the coordinate process on \(\Omega\). The set \(A\) of test functions is often called a pregenerator. A probability measure \(P\) is said to be a solution to the martingale problem associated to \(A\) if all test processes in \eqref{eq: test processes} are \(P\)-martingales. Our idea is to incorporate uncertainty to the pregenerator \(A\).

Let \(U\) be a countable index set and let \(\{Y^u \colon u \in U\}\) be a family of \cadlag processes on~\(\Omega\), which serve as test processes for the martingale problem under uncertainty.  
If \(P\) is a probability measure such that \(Y^u\) is a \emph{special} \(P\)-semimartingale, then there exists a \(P\)-unique predictable \cadlag process \(\A^P (Y^u)\) of finite variation, starting in zero, such that \(Y^u - Y^u_0 - \A^P (Y^u)\) is a \(P\)-local martingale. In case the process in \eqref{eq: test processes} is a (local) \(P\)-martingale, the process \(f (X)\) is a special semimartingale and \(\A^P (f (X) ) = \int_0^\cdot g (X_s) ds\). Motivated by this observation, we define \(\cC\) to be the set of all probability measures \(P\) under that each test process \(Y^u\) is a special \(P\)-semimartingale with absolutely continuous compensator \(\A^P (Y^u)\) such that \((\llambda \otimes P)\)-a.e.
\[
\big( d \A^P (Y^u) / d \llambda \big)_{u \in U} \in \Theta.
\]
Here, \(\Theta\) is a time and path-dependent set-valued mapping on \(\bR_+ \times \Omega\), which captures the uncertainty. 

Our framework is tailor made for adding uncertainty to any class of stochastic processes which can be characterized by a martingale problem. For instance, this is the case for semimartingales (with absolutely continuous characteristics), solutions to stochastic partial differential equations and many Markov processes. In particular, it is possible to recover nonlinear semimartingale settings which are built from suitably parameterized absolutely continuous semimartingale characteristics.
To illustrate this, we explain how nonlinear L\'evy processes, in the sense of \cite{neufeld2017nonlinear}, can be modeled with our framework. 
Further, we introduce a novel class of nonlinear Markov processes which are not necessarily semimartingales: the class of \emph{nonlinear (in the sense of uncertainty) stochastic partial differential equations (NSPDE)}. 
We think this class is of specific interest and deserve further investigation.

The first main result in this paper is the \emph{dynamic programming principle (DPP)} for the nonlinear expectation \eqref{eq: upper exp}, i.e., the tower property
\[
\mathcal{E}_s (\mathcal{E}_t (\psi)) (\omega) = \mathcal{E}_s (\psi) (\omega), \quad \omega \in \Omega, \ \ s \leq t.
\]
In case the uncertainty set \(\Theta\) has a Markovian structure in the sense that \(\Theta (t, \omega)\) depends on \((t, \omega)\) only through the value \(\omega (t-)\), the DPP induces a \emph{nonlinear Markov property} given by
\begin{align} \label{eq: non MP}
\cE^x ( \cE^{X_t} (\psi (X_s))) = \cE^x (\psi (X_{t + s})), \qquad \cE^x (\psi) \triangleq \cE_0 (\psi) (\omega) \big|_{\omega (0) = x}.
\end{align}
As in the theory of (linear) Markov processes, there is a strong link to semigroups.
Indeed, in this Markovian case, the nonlinear Markov property ensures the semigroup property \( T_t T_s = T_{t + s}\) of the family \((T_t)_{t \geq 0}\) defined by 
\begin{align*} 
T_t (\psi)(x) \triangleq \cE^x (\psi (X_t)),
\end{align*}
where \(\psi\) runs through the class of bounded upper semianalytic functions. It is an important question when the semigroup \((T_t)_{t \geq 0}\) preserves some regularity. 
For a continuous path setting, we provide general conditions for the \emph{\(\usc_b\)--Feller property} of the semigroup \((T_t)_{t \geq 0}\), which means that it is a sublinear Markovian semigroup on the space of bounded upper semicontinuous functions. Further, we establish a \emph{strong Markov selection principle}, i.e., we prove that, for every bounded upper semicontinuous function \(\psi \colon F \to \bR\) and any time \(t > 0\), there exists a (time inhomogeneous) strong Markov family \(\{P_{(s, x)} \colon (s, x) \in \bR_+ \times F\}\) such that \(P_{0, x} \in \cC(0, x)\) and \[T_t (\psi) (x) = E^{P_{(0, x)}} \big[ \psi (X_t) \big].\] We stress that the nonlinear structure of our setting is reflected by the fact that the strong Markov selection \(\{P_{(s, x)} \colon (s, x) \in \bR_+ \times F\}\) depends on the input elements \(\psi\) and \(t\).

Thereafter, we investigate the Markovian class of NSPDEs in more detail.
We establish continuity and linear growth conditions for the \(\usc_b\)--Feller property and the strong Markov selection principle. Moreover, for the subclass of infinite-dimensional nonlinear Cauchy problems with drift uncertainty, we derive conditions for the \emph{strong} \(\usc_b\)--Feller property, which means that \(T_t (\usc_b) \subset C_b\) for all \(t > 0\). The strong \(\usc_b\)--Feller property can be seen as a \emph{smoothing property} of the sublinear semigroup \((T_t)_{t \geq 0}\). To the best of our knowledge, such an effect was first discovered in \cite{CN22a} for nonlinear one-dimensional diffusions.
We also emphasis that the strong \(\usc_b\)--Feller property entails the \(C_b\)--Feller property, i.e., \(T_t (C_b) \subset C_b\) for all \(t \in \bR_+\). This property is considered to be of fundamental importance for the study of sublinear Markovian semigroups via pointwise generators, see \cite[Section 2.4]{CN22b} for some comments. 

Let us now comment on some technical aspects of our proofs.
To establish the DPP, we invoke a general theorem from \cite{ElKa15} which requires that we check a measurable graph condition and stability under conditioning and concatenation. Hereby, we benefit from technical ideas developed in \cite{CN22a, neufeld2017nonlinear}.
In the context of classical optimal control, the relation of a value function and a nonlinear semigroup was discovered in \cite{nisio75}. For a nonlinear Markov process framework, the Markov property \eqref{eq: non MP} appeared, to the best of our knowledge, for the first time in the thesis \cite{hol16} for nonlinear Markovian semimartingales. We adapt the proof from \cite{hol16} to our setting.
For the \(\usc_b\)--Feller property, we rely on the theory of correspondences and, for the strong Markov selection principle, we invoke a technique from \cite{nicole1987compactification,hausmann86} for controlled diffusions, which is based on ideas of Krylov \cite{krylov1973selection} and Stroock and Varadhan \cite{stroock2007multidimensional} about Markovian selection. Again, we also benefit from technical ideas developed in \cite{CN22b} for one-dimensional nonlinear diffusions.
The strong \(\usc_b\)--Feller property for nonlinear Cauchy problems is proved by means of a strong Feller selection principle, which adapts ideas from \cite{CN22b, CN22c,stroock2007multidimensional} for finite-dimensional (nonlinear) diffusions to an infinite-dimensional non-semimartingale setting.

We end this section with a summary of the structure of this paper.
In Section \ref{sec: 1}, we introduce our framework and formulate the DPP. Thereafter, in Section~\ref{sec: 3.2}, we comment on nonlinear L\'evy processes and we introduce the new class of NSPDEs. 
We discuss nonlinear Markov processes in Section~\ref{sec: 2}. In particular, in Section~\ref{sec: 3.3}, we establish the \(\usc_b\)--Feller property and, in Section \ref{sec: 3.4}, we provide the strong Markov selection principle. In Section \ref{sec: NSPDE}, these results are tailored to the special case of NSPDEs  and the strong \(\usc_b\)--Feller property for nonlinear Cauchy problems with drift uncertainty is established. 
The remaining sections of this paper are devoted to the proofs of our results. The location of each proof is indicated before the statement of the respective result.  


\section{Nonlinear Processes and the Dynamic Programming Principle} \label{sec: 1}
\subsection{The Ingredients}\label{sec: setting}
Let \(F\) be a Polish space and define $\Omega$ to be either the space of all \cadlag functions \(\bR_+ \to F\) or the space of all continuous functions \(\mathbb{R}_+ \to F\) endowed with the Skorokhod \(J_1\) topology.\footnote{When restricted to the space of continuous functions \(\bR_+ \to F\), the Skorokhod \(J_1\) topology coincides with the local uniform topology.}
The canonical process on $\Omega$ is denoted by \(X\), i.e., \(X_t (\omega) = \omega (t)\) for \(\omega \in \Omega\) and \(t \in \mathbb{R}_+\). 
It is well-known that \(\mathcal{F} \triangleq \mathcal{B}(\Omega) = \sigma (X_t, t \geq 0)\).
We define $\F \triangleq (\mathcal{F}_t)_{t \geq 0}$ to be the canonical filtration generated by $X$, i.e., \(\mathcal{F}_t \triangleq \sigma (X_s, s \leq t)\) for \(t \in \mathbb{R}_+\). 
To lighten our notation, for two stopping times \(S\) and \(T\), we also define the stochastic interval
\[
\of S, T \of \hspace{0.1cm} := \{ (t,\omega) \in \bR_+ \times \Omega \colon S(\omega) \leq t < T(\omega) \}.
\]
The stochastic intervals \( \gs S, T \of, \of S, T \gs, \gs S, T \gs  \) are defined accordingly.
In particular, \(\of 0, \infty\of \hspace{0.1cm} = \bR_+ \times \Omega\).
The set of all probability measures on \((\Omega, \mathcal{F})\) is denoted by \(\mathfrak{P}(\Omega)\) and endowed with the usual topology of convergence in distribution, i.e., the weak topology. The shift operator on \(\Omega\) is denoted by \(\theta = (\theta_t)_{t \geq 0}\), i.e., \(\theta_t (\omega) = \omega (\, \cdot + t)\) for all \(\omega \in \Omega\) and~\(t \in \mathbb{R}_+\). Let \(\bK\) be either the real line \(\bR\) or the complex plane \(\bC\).
Further, let \(U\) be a countable index set and, for every \(u \in U\), let \(Y^u\) be a \(\bK\)-valued \cadlag \(\F\)-adapted process on \(\Omega\) such that \(Y^u_{\cdot + t} = Y^u \circ \theta_t\) for all \(t \in \mathbb{R}_+\). 
We endow \(\bK^U\) with the product Euclidean topology.
Finally, let \(\Theta \colon \mathbb{R}_+ \times \Omega \twoheadrightarrow \bK^U\) be a correspondence, i.e., a set-valued mapping.
\begin{SA} \label{SA: meas gr}
The correspondence \(\Theta\) has a measurable graph, i.e., the graph
\[
\on{gr} \Theta = \big\{ (t, \omega, g) \in \mathbb{R}_+ \times \Omega \times \bK^U \colon g \in \Theta (t, \omega) \big\}
\]
is Borel. 
\end{SA}

Recall that a \(\bK\)-valued \cadlag adapted process \(Y = (Y_t)_{t \geq 0}\) is said to be a \emph{special semimartingale} if \(Y = Y_0 + M + A\), where \(M\) is a local martingale and \(A\) is a predictable process of (locally) finite variation, both starting in zero. The decomposition is unique up to indistinguishability.
A \(\bK\)-valued \cadlag process \(Y\) is called a \emph{special semimartingale after a time \(t^* \in \mathbb{R}_+\)} if the process \(Y_{\cdot + t^*} = (Y_{t + t^*})_{t \geq 0}\) is a special semimartingale for the right-continuous natural filtration of \(X_{\cdot + t^*} = (X_{t + t^*})_{t \geq 0}\).
For \(P \in \mathfrak{P}(\Omega)\), the set of special semimartingales after time \(t^*\) on the probability space \((\Omega, \cF, P)\) is denoted by \(\fSss(t^*, P)\). We also write \(\fSss (P) \triangleq \fSss (0, P)\).
For a given measure \(P\) and a given process \(Y \in \fSss (t^*, P)\), denote by \(\A^P (Y_{\cdot + t^*})\) the predictable part of (locally) finite variation in the semimartingale decomposition of \(Y_{\cdot + t^*}\), and denote by \(\fSac (t^*, P)\) the set of all \(Y \in \fSss(t^*, P)\) such that \(\A^P (Y_{\cdot + t^*})\) is absolutely continuous w.r.t. to the Lebesgue measure~\(\llambda\). Again, we write \(\fSac (P) \triangleq \fSac (0, P)\). For \(\omega, \omega' \in \Omega\) and \(t \in \mathbb{R}_+\), we define the concatenation
\[
\omega \otimes_t \omega' \triangleq  \omega \1_{[ 0, t)} + (\omega (t) + \omega' - \omega' (t))\1_{[t, \infty)},
\]
and, finally, for $(t,\omega) \in \of 0, \infty\of$, we define $\cA(t,\omega) \subset \mathfrak{P}(\Omega)$ by
\begin{align*}
\cA(t,\omega) \triangleq \big\{ P \in \mathfrak{P} (\Omega) \colon P&(X = \omega \text{ on } [0, t]) = 1, \forall_{u \in U} \ Y^u \in \fSac (t, P), \\&
 (\llambda \otimes P)\text{-a.e. } (d \A^P (Y^u_{\cdot + t})/d \llambda)_{u \in U} \in \Theta (\, \cdot + t, \omega \otimes_t X) \big\}.
\end{align*}
\begin{SA} \label{SA: non empty}
\(\cA (t, \omega) \not = \emptyset\) for all \((t, \omega) \in \of 0, \infty\of\).
\end{SA}

\begin{remark}
	\quad
	\begin{enumerate}
		\item[\textup{(i)}] For every \((t, \omega) \in \of 0, \infty\of\), the set $\cA(t, \omega)$ depends only on the path $\omega$ up to time $t$.
	\item[\textup{(ii)}] Notice that
	\begin{align*}
	\cA(t,\omega) = \big\{ P \in  \mathfrak{P} (\Omega) \colon P&(X = \omega \text{ on } [0, t]) = 1, \forall_{u \in U} \ Y^u \in \fSac (t, P), \\& 
	(\llambda \otimes P)\text{-a.e. } (d \A^P (Y^u_{\cdot + t})/d \llambda)_{u \in U} \in \Theta (\, \cdot + t, X) \big\}.
	\end{align*}
	We define \(\cC (t, \omega)\) with \(\Theta (\, \cdot + t, \omega \otimes_t X)\), instead of \(\Theta (\, \cdot + t, X)\), to prevent confusion about measurability. Namely, both \((d \A^P (Y^u_{\cdot + t})/d \llambda)_{u \in U}\) and \(\Theta (\, \cdot + t, \omega \otimes_t X)\) are measurable w.r.t. \(\sigma (X_s, s \geq t)\), while \(\Theta (\, \cdot + t, X)\) might also depend on~\(X_s, s < t\).
	\end{enumerate}
\end{remark}

\begin{discussion}[Relation to Martingale Problems]
The idea behind the set \(\cC\) is inspired by the fact that many classes of stochastic processes can be modeled via martingale problems. Take a set \(A \subset C_b(F; \bR) \times B(F; \bR)\), which is often called \emph{pregenerator}. Here, \(B(F; \bR)\) denotes the set of all bounded Borel functions \(F \to \bR\). Following the monograph \cite{EK}, a probability measure \(P\) on \((\Omega, \cF)\) is said to solve the martingale problem associated to \(A\) if all processes of the form
\begin{align}\label{eq: test martingale}
f (X) - \int_0^\cdot g (X_s) ds, \quad (f, g) \subset A, 
\end{align}
are \(P\)-martingales. To see the connection to the set \(\cC\), suppose that \(P\) solves the martingale problem associated to \(A\). Then, for every \((f, g) \in A\), the process \(f (X)\) is a special \(P\)-semimartingale, as it decomposes into the \(P\)-martingale given in \eqref{eq: test martingale} and the continuous (hence, predictable) finite variation process
\(
\int_0^\cdot g (X_s) ds,
\)
which, in particular, means that \(d \A^P ( f (X) ) / d \llambda = g (X)\). Roughly speaking, the set \(\cC\) mimics the idea of the martingale problem but it allows some part of the pregenerator, namely the density of the compensator, to take values in the set \(\Theta\), which incorporates uncertainty. 
For the important special case where \(\Theta\) is generated by a controlled coefficient, this interpretation is made explicit by the following proposition.

\begin{proposition} \label{prop: structure of R}
Let \(G\) be a metrizable Souslin space\footnote{That is a metrizable space that is the continuous image of a Polish space. Any analytic subspace of a Polish space is a Souslin space.} and assume that there exists a predictable Carath\'eodory function \(\z = (\z^u)_{u \in U} \colon G \times \of 0, \infty\of \, \to \mathbb{K}^U\), i.e., \(\z\) is continuous in the first and predictable in the second variable, such that 
\[
\Theta (t, \omega) = \big\{ \z (g, t, \omega) \colon g \in G \big\}, \qquad (t, \omega) \in \of 0, \infty\of.
\]
Here, we still assume that Standing Assumption~\ref{SA: meas gr} holds for \(\Theta\), i.e., that \(\on{gr} \Theta\) is measurable.
For \(t \in \bR_+\), let \(\mathscr{P}_t\) be the predictable \(\sigma\)-field on \(\of 0, \infty\of\) corresponding to the filtration generated by \(X_{\cdot + t}\). 
Then, for every \((t, \omega) \in \of 0, \infty\of\) and \(P \in \cC(t, \omega)\), there exists a \(\mathscr{P}_{t}\)-measurable function \(\g \colon \of 0, \infty\of\, \to G\) such that, for all \(u \in U\), the process 
\[
Y^u_{\cdot +t} - Y^u_{t} - \int_0^\cdot \z^u (\g (s, X), s + t, \omega \otimes_{t} X) ds 
\]
is a \(P\)-local martingale for the right-continuous natural filtration of \(X_{\cdot + t}\). 
\end{proposition}
\begin{proof}
	For \((s, \alpha) \in \of 0, \infty\of\), we set 
	\(
	\alpha^{s - } \triangleq \alpha \1_{[0, s)} + \alpha (s-) \1_{[s, \infty)}
	\) and
		\begin{align*}
			H \triangleq \Big\{ (s, \alpha) \in \of 0, \infty \of \colon ( s + t, \omega \otimes_t \alpha^{(s + t)-}, ( d\A^P_{s} (Y^u_{\cdot + t}) (\alpha)/ d \llambda )_{u \in U}) \not \in \on{gr} \Theta \Big\}. 
		\end{align*}
	By Standing Assumption~\ref{SA: meas gr}, the graph \(\on{gr} \Theta\) is measurable and therefore, \(H\in \mathcal{B}(\of 0, \infty\of)\). Further, we get from \cite[Theorem~IV.97]{DM78} that \(H \in \mathscr{P}_t\), that the function \((g, s, \alpha) \mapsto \z (g, s + t, \omega \otimes_t \alpha)\) is continuous in the \(G\)-variable and \(\mathscr{P}_t\)-measurable in the \(\of 0, \infty\of\)-variable and that
	\begin{align} \label{eq: H eq}
		H = \Big\{ (s, \alpha) \in \of 0, \infty \of \colon ( s + t, \omega \otimes_t \alpha, ( d\A^P_{s} (Y^u_{\cdot + t}) (\alpha)/ d \llambda )_{u \in U}) \not \in \on{gr} \Theta \Big\}.
	\end{align}
		For some arbitrary, but fixed, \(g_0 \in G\), we set 
		\[
		\pi (s, \alpha) \triangleq \begin{cases} \z (g_0, s + t, \omega \otimes_t \alpha) ,& \text{if } (s, \alpha) \in H,\\
			( d\A^P_{s} (Y^u_{\cdot + t}) (\alpha)/ d \llambda )_{u \in U},& \text{if } (s, \alpha) \not \in H, \end{cases}
		\]
		which is a \(\mathscr{P}_t\)-measurable map such that \(\pi (s, \alpha) \in \Theta (s + t, \omega \otimes_t \alpha)\) for all \((s, \alpha) \in \of 0, \infty\of\).
		As \(\llambda \otimes P\) is \(\sigma\)-finite on \((\of 0, \infty\of, \mathscr{P}_t)\) and \(G\) is a metrizable Souslin space, 
		the measurable implicit function theorem \cite[Theorem 7.2]{himmelberg} yields the existence of a \(\mathscr{P}_t\)-measurable function \(\g \colon \of 0, \infty\of \, \to G\) such that \(
	\pi (s, \alpha) = \z (\g (s, \alpha), s + t, \omega \otimes_t \alpha)\) for \((\llambda \otimes P)\)-a.a. \((s, \alpha) \in \of 0, \infty\of\). As \(P \in \cC(t, \omega)\), by virtue of \eqref{eq: H eq}, we have \((\llambda \otimes P)\)-a.e. \(\pi = (d\A^P (Y^u_{\cdot + t})/d \llambda)_{u \in U}\) and 
	the claim follows. 
\end{proof} 

	For general martingale problems, the pregenerator \(A\) is not necessarily a countable set. In the diffusion-type setting of Stroock and Varadhan \cite{stroock2007multidimensional}, for example, the pregenertor \(A\) is given by 
	\[
	A = \Big\{ (f, g) \colon f \in C^2_c (\mathbb{R}^d; \bR), \ g = \langle b, \nabla f \rangle + \tfrac{1}{2} \on{tr} \big[ \sigma \sigma^* \nabla^2 f \big] \Big\}, 
	\]
	where \(b \colon \bR^d \to \bR^d\) and \(\sigma \colon \bR^d \to \bR^{d \times r}\) are Borel measurable coefficients. In the definition of the set~\(\cC\) we take only countably many test functions into account. In typical situations this is no restriction, as it is often possible to pass to a countable subset of the pregenerator that determines the same martingale problem. 
	For example, in the diffusion-type setting explained above, it is sufficient to consider the set 
	\[
		A = \Big\{ (f, g) \colon f (x) = x^k_i x_j^k, \ k = 0, 1, \, i, j = 1, \dots, d, \ \ g = \langle b, \nabla f \rangle + \tfrac{1}{2} \on{tr} \big[ \sigma \sigma^* \nabla^2 f \big] \Big\}, 
	\]
	cf. \cite[Proposition~5.4.6]{KaraShre}.
	We also refer to \cite[Proposition~4.3.1]{EK} for an abstract result in this direction. In Section~\ref{sec: nonlinear levy} below, we will also discuss a way to characterize the class of nonlinear L\'evy processes as introduced in \cite{neufeld2017nonlinear} with a countable pregenerator under parameter uncertainty. 
	
	Let us also comment on the question to what extend the pregenerator has to determine the martingale problem uniquely. In the classical theory on Feller--Dynkin processes, the (pre)generators characterize the law of the processes in a unique manner, cf., for example, \cite[Theorem 3.33]{liggett10}.
	In the classical theory on martingale problems (as in \cite{EK,stroock2007multidimensional}, for instance), such a uniqueness property is an important question rather than intrinsically given. In particular, there is interest in martingale problems that are not well-posed (i.e., do not have unique solutions for each deterministic initial value). Indeed, by the strong Markov selection principle (see Chapter~12 in \cite{stroock2007multidimensional}), many non-well-posed martingale problems give rise to strong Markov families which can be selected from the set of solutions. 
	Our approach is fully independent of a uniqueness property, i.e., we can also introduce uncertainty to martingale problems that have infinitely many solutions (by convexity, a martingale problem has none, one or infinitely many solutions). 
	In fact, considering martingale problems with infinitely many solutions can lead to interesting effects. We explain an example of such an effect in Remark \ref{rem: SV example} below. 

\end{discussion}


\subsection{Dynamic Programming Principle} \label{sec: DPP}
We now define a nonlinear conditional expectation and we provide the corresponding dynamic programming principle (DPP), i.e., the tower property of the nonlinear conditional expectation.
Suppose that \(\psi \colon \Omega \to [- \infty, \infty]\) is an upper semianalytic function, i.e., the set \(\{\omega \in \Omega \colon \psi (\omega) > c\}\) is analytic for every \(c \in \mathbb{R}\),
and define the \emph{value function} \(v\) by
\[
v (t, \omega) \triangleq \sup_{P \in \cA (t, \omega)} E^P \big[ \psi \big], \quad (t, \omega) \in \of 0, \infty\of.
\]
The following theorem is proved in Section \ref{sec: pf DPP}.
\begin{theorem}[Dynamic Programming Principle] \label{theo: DPP}
The value function \(v\) is upper semianalytic. Moreover, for every \((t, \omega) \in \of 0, \infty\of \) and every stopping time \(\tau\) with \(t \leq \tau < \infty\), we have 
\begin{align} \label{eq: DPP}
v(t, \omega) = \sup_{P \in \cA (t, \omega)} E^P \big[ v (\tau, X) \big]. 
\end{align}
\end{theorem}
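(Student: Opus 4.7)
The plan is to apply the abstract dynamic programming principle of El~Karoui and Tan~\cite{ElKa15}, which simultaneously yields the upper semianalyticity of the value function $v$ and the tower property \eqref{eq: DPP}, provided that the family $\{\cA(t,\omega)\}_{(t,\omega) \in \of 0,\infty\of}$ satisfies three structural conditions: \emph{(i)} the graph $\{(t,\omega,P) : P \in \cA(t,\omega)\} \subset \bR_+ \times \Omega \times \mathfrak{P}(\Omega)$ is analytic; \emph{(ii)} stability under conditioning: for every $P \in \cA(t,\omega)$ and every $\F$-stopping time $\tau \geq t$, a regular conditional probability distribution $(P_{\omega'})_{\omega' \in \Omega}$ of $P$ given $\cF_\tau$ satisfies $P_{\omega'} \in \cA(\tau(\omega'),\omega')$ for $P$-a.e.~$\omega'$; \emph{(iii)} stability under concatenation: the pasting of $P$ with any $\cF_\tau$-measurable family of kernels $\omega' \mapsto Q_{\omega'}$ with $Q_{\omega'} \in \cA(\tau(\omega'),\omega')$ returns an element of $\cA(t,\omega)$. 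Standing Assumption~\ref{SA: non empty} provides non-emptiness of the sup domains.

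For the graph condition I would decompose the three defining constraints of $\cA$. The initial-path restriction $P(X=\omega \text{ on } [0,t]) = 1$ is Borel in $(t,\omega,P)$. The predictable compensator map $P \mapsto \A^P(Y^u_{\cdot+t})$ can be shown to be jointly measurable using the machinery of \cite{neufeld2014measurability}, which in turn makes the assertion ``$Y^u \in \fSac(t,P)$'' analytic in $P$ uniformly in $(t,\omega)$. Finally, Standing Assumption~\ref{SA: meas gr} renders $\on{gr}\Theta$ Borel, so the $(\llambda \otimes P)$-a.e. inclusion $(d\A^P(Y^u_{\cdot+t})/d\llambda)_{u \in U} \in \Theta(\cdot+t, \omega \otimes_t X)$ defines an analytic set via a projection-of-Borel argument on $\bR_+ \times \Omega \times \mathfrak{P}(\Omega)$.

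Stability under conditioning rests on the classical observation that the predictable compensator is preserved under regular conditional probabilities at stopping times: if $Y^u \in \fSac(t,P)$, then for $P$-a.e.~$\omega'$ one has $Y^u \in \fSac(\tau(\omega'), P_{\omega'})$ with density $d\A^{P_{\omega'}}(Y^u_{\cdot+\tau(\omega')})/d\llambda$ equal to the shifted restriction of its $P$-counterpart up to a null set. The space-homogeneity $Y^u_{\cdot+t} = Y^u \circ \theta_t$ imposed on the test processes is used precisely here to make the compensator covariant with time shifts. A Fubini argument then transfers the $\Theta$-inclusion from $P$ to the regular conditional kernels, where one exploits that $\Theta(\cdot+\tau(\omega'), \omega' \otimes_{\tau(\omega')} X)$ depends only on the portion of $\omega'$ before $\tau(\omega')$.

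Stability under concatenation is the most delicate step. Path consistency on $[0,t]$ is trivially inherited from $P$. For the semimartingale structure, one verifies that under the pasted measure $R$ the process $Y^u$ carries on $[t,\tau]$ the compensator of $Y^u$ under $P$, while after $\tau$ it inherits that of the continuation kernels $Q_{\omega'}$; uniqueness of the predictable finite-variation part then forces the piecewise construction to be well-defined and absolutely continuous, with the $\Theta$-inclusion holding $(\llambda \otimes R)$-a.e. The main obstacle lies precisely here: one must establish predictability of the concatenated compensator across the random time $\tau$ and arrange measurability of $\omega' \mapsto Q_{\omega'}$ so that the Fubini-style construction of $R$ is well-posed. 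The design choice of $\Theta(\cdot+t, \omega \otimes_t X)$ rather than $\Theta(\cdot+t, X)$ in the definition of $\cA$, explained in the remark following Standing Assumption~\ref{SA: non empty}, was made precisely to avoid such measurability pathologies at the pasting time. Once \emph{(i)}--\emph{(iii)} are in place, the El~Karoui--Tan theorem applies verbatim and delivers Theorem~\ref{theo: DPP}.
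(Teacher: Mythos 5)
Your proposal follows essentially the same route as the paper: the proof in Section \ref{sec: pf DPP} likewise reduces Theorem \ref{theo: DPP} to \cite[Theorem 2.1]{ElKa15} by verifying the measurable graph condition (via the measurability results of \cite{neufeld2014measurability} for semimartingale characteristics together with Standing Assumption \ref{SA: meas gr}), stability under conditioning (via the Stroock--Varadhan-type preservation of the special semimartingale decomposition under regular conditional probabilities and a Fubini argument for the \(\Theta\)-inclusion), and stability under pasting (via the piecewise identification of the compensator before and after \(\tau\) and uniqueness of the predictable finite-variation part). The only cosmetic difference is that the paper actually shows the graph is Borel, noting in a footnote that analyticity, as in your sketch, would already suffice.
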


The value function \(v\) induces a nonlinear expectation \(\mathcal{E}\) via the formula
\[
\mathcal{E}_t (\psi) (\omega) \triangleq v (t, \omega), \qquad (t, \omega) \in \of 0, \infty\of,
\]
and the DPP provides its tower property, i.e., \eqref{eq: DPP} means that 
\[
\mathcal{E}_t (\psi) = \mathcal{E}_t (\mathcal{E}_\tau (\psi)), \quad t \leq \tau \leq \infty.
\]
By the pathwise structure of this property, it follows also immediately that 
\[
\mathcal{E}_{\rho} (\psi) = \mathcal{E}_{\rho} (\mathcal{E}_\tau (\psi))
\]
for all finite stopping times \(0 \leq \rho \leq \tau < \infty\).

For \(x \in F\), we define 
\begin{align*}
\mathcal{R} (x) \triangleq \big\{ P \in \mathfrak{P} (\Omega) \colon &P(X_0 = x) = 1, \forall_{u \in U} \ Y^u \in \fSac (P), \\& \hspace{1cm}
 (\llambda \otimes P)\text{-a.e. } (d\A^P (Y^u)/d \llambda)_{u \in U} \in \Theta \big\},
\end{align*}
and 
\[
\mathcal{E}^x (\psi) \triangleq \sup_{P \in \cR (x)} E^P \big[ \psi \big].
\]
Thinking of classical martingale problems, we interpret \(\{\mathcal{E}^x \colon x \in F\}\) as a \emph{nonlinear stochastic process}. In Section \ref{sec: 2}, we specify our setting to a Markovian situation and we establish more properties of \(\{\cE^x \colon x \in F\}\) to justify our interpretation.
Before we start this program, let us discuss some important examples.

\section{Examples} \label{sec: 3.2}
In this section we explain how our framework can be used to model \emph{nonlinear L\'evy processes} in the sense of \cite{neufeld2017nonlinear} and we introduce some new classes of nonlinear processes, namely the class of \emph{nonlinear Markov chains} and the class of \emph{nonlinear (in the sense of uncertainty) stochastic partial differential equations~(NSPDEs)}. 

\subsection{Nonlinear L\'evy processes} \label{sec: nonlinear levy}
Nonlinear L\'evy processes have been introduced in \cite{neufeld2017nonlinear} via an uncertain set of L\'evy--Khinchine triplets. We now explain how such nonlinear processes can be constructed in our framework. Let us emphasis that the discussion can be extended to more general classes of semimartingales under parameter uncertainty. 

To fix the basic setting, we consider \(F \triangleq \mathbb{R}^d, \mathbb{K} \triangleq \mathbb{C}, U \triangleq \mathbb{Q}^d\) and we take \(\Omega\) to be the space of all \cadlag functions \(\bR_+ \to F = \bR^d\). 
Let \(\mathbb{S}^d\) be the space of symmetric non-negative definite real-valued \(d \times d\) matrices. Further, let \(\mathcal{L}\) be the space of all L\'evy measure on \(\bR^d\), i.e., the space of all measures \(\K\) on \((\bR^d, \mathcal{B}(\bR^d))\) such that 
\[\K (\{0\}) = 0 \ \text{ and } \ \int (1 \wedge \|x\|^2) \K (dx) < \infty.\] 
As in \cite{neufeld2014measurability,neufeld2017nonlinear}, we endow \(\mathcal{L}\) with the weakest topology under which all maps 
\[\K \mapsto \int f(x) (1 \wedge \|x\|^2) \K (dx), \quad f \in C_b (\bR^d; \bR),\] 
are continuous. By Lemma \ref{lem: Levy polish} below, the space \(\mathcal{L}\) is Polish.
Let \(\widetilde{\Theta}\subset \bR^d \times \mathbb{S}^d \times \mathcal{L}\) be an analytic subset, where the product space is endowed with the product topology (and \(\mathbb{S}^d\) and \(\bR^d\) are endowed with their usual topologies), such that
\begin{align} \label{eq: bound Levy triplet}
\sup_{ (b, c, \K) \in \widetilde{\Theta}}  \Big( \|b\| + \|c\| + \int ( 1 \wedge \|x\|^2) \K (dx) \Big) < \infty.
\end{align}
For every \(u \in \mathbb{Q}^d\), we set \(Y^u \triangleq e^{i \langle u,  X\rangle}\) and we define
\[
A^u (b, c, \K) \triangleq - i \langle u, b \rangle + \frac{1}{2} \langle u, c u\rangle - \int \big( e^{i \langle u, x\rangle} - 1 - i \langle u, h (x)\rangle \big) \K (dx),
\]
and 
\begin{align*}
\Theta (t, \omega) \triangleq \big\{ \big(e^{i \langle u, \omega (t-)\rangle} A^u (b, c, \K)\big)_{u \in \mathbb{Q}^d} \colon (b, c, \K) \in \widetilde{\Theta}\big\}, \quad (t, \omega) \in \of 0, \infty\of,
\end{align*}
where \(h \colon \bR^d \to \bR^d\) is a fixed continuous truncation function and \(i\) denotes the imaginary number. We also assume that Standing Assumption~\ref{SA: meas gr} holds, which is for instance the case when \(\widetilde{\Theta}\) is compact (by \cite[Lemma~2.12]{CN22a}).

We now relate this setting to those from \cite{neufeld2017nonlinear}. First, for a given \(P \in \cR (x_0)\), we show that the coordinate process \(X\) is an \(\mathbb{R}^d\)-valued \(P\)-semimartingale whose characteristics \((B^P, C^P, \nu^P)\) are absolutely continuous with densities \((b^P, c^P, \K^P)\) such that \((\llambda \otimes P)\)-a.e. \((b^P, c^P, \K^P) \in \widetilde{\Theta}\). 
Take \(P \in \cR (x_0)\). For every \(k \in \{1, \dots, d\}\) and \(n \in \mathbb{N}\), the process \(\sin ( X^{(k)} / n)\) is a \(P\)-semimartingale. Here, \(X^{(k)}\) denotes the \(k\)-th coordinate of \(X\).
There exists a function \(f \in C^2 (\bR; \bR)\) such that \(f ( \sin (x) ) = x\) for all \(|x| \leq 1/2\). Hence, for \[\tau_n \triangleq \inf \big\{t \geq 0 \colon |X^{(k)}_t| > n / 2\big\},\] the process \(X^{(k)} = n f (\sin (X^{(k)} / n))\) is a \(P\)-semimartingale on \(\of 0, \tau_n \of\). Consequently, by part c) of \cite[Proposition I.4.25]{JS}, it follows that \(X^{(k)}\) is a \(P\)-semimartingale.
Of course, this means that \(X\) is an \(\bR^d\)-valued \(P\)-semimartingale.
For every \(u \in \bR^d\), by Lemma~\ref{lem: continuity in L} below, the map 
\begin{align*} 
\bR^d \times \mathbb{S}^d \times \mathcal{L} \ni (b, c, \K) \mapsto A^u (b, c, \K) \in \mathbb{C}
\end{align*}
is continuous. 
Using this observation, we deduce from Proposition~\ref{prop: structure of R} that there exists a predictable function \(\g = \g (P) \colon \of 0, \infty\of \, \to \widetilde{\Theta}\) such that 
\begin{align*}
e^{i \langle u, X\rangle} - e^{i \langle u, x_0 \rangle} - \int_0^\cdot e^{i \langle u, X_s \rangle} A^u ( \g (s, X) ) ds, \quad u \in \mathbb{Q}^d, 
\end{align*}
are \(P\)-local martingales. 
For every \(u \in \mathbb{Q}^d\), we define 
\[
\mathfrak{A}^P (u) \triangleq - i \langle u, B^P \rangle + \tfrac{1}{2} \langle u, C^P u \rangle - \int \big( e^{i \langle u, x\rangle} - 1- i \langle u, h(x)\rangle \big) \nu^P([0, \cdot \hspace{0.05cm}] \times dx).
\]
By virtue of \cite[Theorem II.2.42]{JS}, we get that \(P\)-a.s.
\begin{align} \label{eq: equality for rationals}
 \mathfrak{A}^P (u) = \int_0^\cdot A^u ( \g (s, X) ) ds, \quad u \in \mathbb{Q}^d.
\end{align}
It is clear that \(P\)-a.s. the map \(u \mapsto \mathfrak{A}^P_t (u)\) is continuous for every \(t \in \bR_+\). Further, thanks to \eqref{eq: bound Levy triplet}, the dominated convergence theorem implies that \(u \mapsto \int_0^t A^u (\g (s, X)) ds\) is continuous for every \(t \in \bR_+\). Hence, the equality in \eqref{eq: equality for rationals} even holds for all \(u \in \bR^d\) and we may use the uniqueness lemma of Gnedenko and Kolmogorov (\cite[Lemma~II.2.44]{JS}) to conclude that 
\[
P\text{-a.s. } (B^P, C^P, \nu^P) \ll \llambda \text{ with \((\llambda \otimes P)\)-a.e. } (b^P, c^P, F^P) = \g (\, \cdot\,, X).
\]
This completes the proof of the first direction.

Conversely, if \(P\) is the law of a semimartingale (for its canonical filtration), starting at \(x_0 \in \mathbb{R}^d\), with absolutely continuous characteristics whose densities \((b^P, c^P, F^P)\) are such that \((\llambda \otimes P)\)-a.e. \((b^P, c^P, F^P) \in \widetilde{\Theta}\), then \cite[Theorem II.2.42]{JS} yields that \(P \in \cR(x_0)\).

\subsection{Nonlinear Markov Chains} \label{sec: nonlinear MC}
A (continuous-time) Markov chain is a strong Markov process on some countable discrete state space \(F\). Typically, Markov chains are modeled via \(Q\)-matrices, which encode their infinitesimal dynamics. In particular, provided the Markov chain is a Feller--Dynkin process, the \(Q\)-matrix corresponds to the generator of the chain (see \cite{ReuterRiley}). In the following we adapt the underlying martingale problem (see, e.g., \cite[Theorem IV.20.6]{RogersWilliams} or \cite[Lemma 2.4]{YinZhang}) to a nonlinear setting. For simplicity, we will restrict our attention to nonlinear Markov chains with a \emph{finite} state space~\(F\).

Let \(U \triangleq F \triangleq \{1, \dots, N\}\) with \(N \in \mathbb{N}\), let \(\Omega\) be the space of all \cadlag functions \(\bR_+ \to F\), and set \( Y^u = \1_{\{X = u\}}\) for \(u \in U\). 
Denote by \(\mathbb{M}_q\) the set of all \(Q\)-matrices on~\(F\), i.e., the set of all matrices \(Q = (q (k, n))_{k, n = 1}^N \subset \bR^{N \times N}\) such that \(q (k, n) \geq 0\) for all \(k \not = n\) and \(Q \1= 0\).
Let \(G\) be a set, let \(Q = (q (k, n))_{k, n = 1}^N \colon G \to \mathbb{M}_q\) be a map and define, for~\((t, \omega) \in \of 0, \infty\of\), 
\[
\Theta (t, \omega) \triangleq \big\{ (q (\omega (t-), u) (g) )_{u \in U} \colon g \in G\big\}.
\]
In case \(G\) is the singleton \(\{g_0\}\), then \(\cR(x_0)\) is also a singleton whose only element is the law of the continuous-time Markov chain with starting value \(x_0\) and \(Q\)-matrix \(Q (g_0)\), see, e.g., \cite[Lemma 2.4]{YinZhang}.
By virtue of Proposition \ref{prop: structure of R}, this observation confirms our interpretation as a nonlinear Markov chain, i.e., a Markov chain with uncertain \(Q\)-matrix.

\subsection{Nonlinear SPDEs} \label{sec: nonlinear SPDE}
Let \((H_1, \|\cdot\|_{H_1}, \langle \, \cdot, \cdot \,\rangle_{H_1})\) and \((H_2, \|\cdot\|_{H_2}, \langle \, \cdot, \cdot\, \rangle_{H_2})\) be two separable Hilbert spaces over the real numbers, and let \((A, D(A))\) be the generator of a \(C_0\)-semigroup on \(H_2\). In the following we introduce drift and volatility uncertainty to the class of (semilinear) stochastic partial differential equations (SPDEs) of the type 
\[
d Y_t = A Y_t dt + \mu (Y_t) dt + \sigma (Y_t) d W_t, 
\]
where \(W\) is a cylindrical Brownian motion over \(H_1\) and \(\mu\) and \(\sigma\) are suitable measurable coefficients. 
To define our setting, we rely on the cylindrical martingale problem associated to semilinear SPDEs, see, e.g., \cite{criens20,criens21} for more details.

We denote the set of linear bounded operators from \(H_1\) into \(H_2\) by \(L (H_1, H_2)\).
Let \(F \triangleq H_2\), let \(\Omega\) be the space of all continuous functions \(\bR_+ \to F\), take a set \(G\) and let \(\mu \colon G \times H_2 \to H_2\) and \(\sigma \colon G \times H_2 \to L (H_1, H_2)\) be functions. The adjoint of \((A, D(A))\) is denoted by \((A^*, D(A^*))\).
By virtue of \cite[Lemma 7.3]{criens22}, there exists a countable set \(D \subset D (A^*) \subset H_2\) such that, for every \(y \in D(A^*)\), there exists a sequence \((y_n)_{n = 1}^\infty \subset D\) such that \(y_n \to y\) and \(A^* y_n \to A^* y\). In other words, \(D\) is a countable subset of \(D(A^*)\) which is dense in \(D(A^*)\) for the graph norm.
We set \(f_i (x) \triangleq x^i\) for \(x \in \bR\) and \(i = 1,2\), \[U \triangleq \big\{ (H_2 \ni x \mapsto f_i (\langle y, x \rangle_{H_2} )) \colon y \in D, i = 1, 2\big\}, \qquad Y^u \triangleq u (X),\ u \in U,\] and 
\begin{align*}
\mathscr{L}^{u, g} (x) \triangleq \big( \langle A^* y, x \rangle_{H_2} + \langle y, \mu (g, x) \rangle_{H_2} \big) f_i' ( \langle y, x\rangle_{H_2} ) + \tfrac{1}{2} \| \sigma^* (g, x) y \|^2_{H_1} f''_i (\langle y, x\rangle_{H_2}), 
\end{align*}
for \((x, g)\in H_2 \times G\) and \(u = f_i (\langle y, \cdot\, \rangle_{H_2}) \in U\). Finally, for \((t, \omega) \in \of 0, \infty\of\), we define  
\[
\Theta (t, \omega) \triangleq \big\{ ( \mathscr{L}^{u, g} ( \omega (t) ) )_{u \in U} \colon g \in G \big\}.
\]
In Lemma \ref{lem: SPDE representation} below, we show that, under suitable assumptions on the coefficients and the parameter space \(G\), any \(P \in \cR(x_0)\) is the law of a mild solution processes (see \cite[Section 6.1]{DaPrato}) to an SPDE of the form 
\[
d Y_t = A Y_t dt + \mu (\g (t, Y), Y_t) dt + \sigma (\g (t, Y), Y_t) d W_t, \quad Y_0 = x_0, 
\]
where \(\g = \g (P) \colon \of 0, \infty\of \hspace{0.1cm} \to G\) is a predictable function.
This observation confirms our interpretation as an SPDE with uncertain coefficients.


\section{Nonlinear Markov Processes} \label{sec: 2}
In this section we investigate a class of nonlinear Markov processes. That is, we study the family \(\{\mathcal{E}^x \colon x \in F\}\) under the following standing assumption.
\begin{SA} \label{SA: markov}
For every \((t, \omega) \in \of 0, \infty\of\), the set \(\Theta (t, \omega)\) depends on \((t, \omega)\) only through \(\omega (t-)\), i.e., \(\Theta (t, \omega) \equiv \Theta (\omega (t-))\), where \(\Theta \colon F \twoheadrightarrow \mathbb{K}^U\).
\end{SA}
The program of this section is the following.
First, we explain the Markov property of \(\{\mathcal{E}^x \colon x \in F\}\), which further leads to a sublinear Markovian semigroup \((T_t)_{t \geq 0}\) on the cone of bounded upper semianalytic functions. Afterwards, we establish conditions for \((T_t)_{t \geq 0}\) to be a sublinear Markovian semigroup on the cone of bounded upper semicontinuous functions and we derive a strong Markov selection principle.

\subsection{The nonlinear Markov property}
The following proposition provides the Markov property of \(\{\cE^x \colon x \in F\}\). The proof is given in Section \ref{sec: pf NMP}.
\begin{proposition} \label{prop: markov property}
For every upper semianalytic function \( \psi \colon \Omega \to [- \infty, \infty] \), the equality
\[
\cE^x( \psi \circ \theta_t) = \cE^x ( \cE^{X_t} (\psi))
\]
holds for every \((t, x) \in \bR_+ \times F\).
\end{proposition}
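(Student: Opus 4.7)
The plan is to derive Proposition~\ref{prop: markov property} directly from the DPP of Theorem~\ref{theo: DPP}. Fix an upper semianalytic $\psi$, set $\varphi \triangleq \psi \circ \theta_t$, and denote by $v(s, \omega) \triangleq \sup_{P \in \cA(s, \omega)} E^P[\varphi]$ the associated value function on $\of 0, \infty \of$. At $s = 0$ one has $\cA(0, \omega) = \cR(\omega(0))$, since the concatenation collapses ($\omega \otimes_0 X = X$ whenever $P(X_0 = \omega(0)) = 1$). Applying the DPP with the constant stopping time $\tau \equiv t$ therefore gives
\[
\cE^x(\psi \circ \theta_t) \;=\; v(0, \omega)\big|_{\omega(0) = x} \;=\; \sup_{P \in \cR(x)} E^P\big[ v(t, X) \big].
\]
The proposition will follow once I establish the \emph{spatial identity} $v(t, \omega') = \cE^{\omega'(t)}(\psi)$ for every $\omega' \in \Omega$, for then the display above reads $\sup_{P \in \cR(x)} E^P[\cE^{X_t}(\psi)] = \cE^x(\cE^{X_t}(\psi))$ by the very definition of $\cE^x$.

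To prove the spatial identity I would exhibit a bijection $\Phi \colon \cA(t, \omega') \to \cR(\omega'(t))$ under which $E^P[\psi \circ \theta_t] = E^{\Phi(P)}[\psi]$. Forward: for $P \in \cA(t, \omega')$, set $\Phi(P) \triangleq P \circ \theta_t^{-1}$. Since $P(X_t = \omega'(t)) = 1$, one has $\Phi(P)(X_0 = \omega'(t)) = 1$. The standing shift identity $Y^u \circ \theta_t = Y^u_{\cdot + t}$ transfers the special semimartingale property and its absolutely continuous compensator from $Y^u_{\cdot + t}$ under $P$ to $Y^u$ under $\Phi(P)$. Because $P(X = \omega' \text{ on } [0,t]) = 1$ implies $\omega' \otimes_t X = X$ $P$-a.s.\ (Remark 2.2(ii)), the $\cA(t, \omega')$-constraint reduces to $(d\A^P(Y^u_{\cdot + t})/d\llambda)_{u \in U} \in \Theta(\cdot + t, X)$; by Standing Assumption~\ref{SA: markov} this equals $\Theta(X((\cdot + t)-))$, which after push-forward by $\theta_t$ becomes $\Theta(X(\cdot-))$, i.e., precisely the condition defining $\cR(\omega'(t))$.

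Backward: given $Q \in \cR(\omega'(t))$, define $\Phi^{-1}(Q)$ as the push-forward of $Q$ under the map $Y \mapsto \omega' \1_{[0, t)} + Y(\cdot - t)\1_{[t, \infty)}$, which yields a cadlag (resp.\ continuous) path thanks to $Q(Y_0 = \omega'(t)) = 1$. Running the compensator computation in reverse, again using Standing Assumption~\ref{SA: markov}, shows $\Phi^{-1}(Q) \in \cA(t, \omega')$, and by construction $E^{\Phi^{-1}(Q)}[\psi \circ \theta_t] = E^Q[\psi]$. One further needs the map $\omega \mapsto \cE^{\omega(t)}(\psi)$ to be upper semianalytic so that $\cE^x(\cE^{X_t}(\psi))$ is well-defined as a supremum over $\cR(x)$; this follows from Theorem~\ref{theo: DPP} applied once with $\varphi = \psi$ (which yields upper semianalyticity of $v(0, \cdot)$, depending on $\omega$ only through $\omega(0)$) composed with the continuous evaluation $X_t$.

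The main obstacle is the bookkeeping in the compensator transfer under $\theta_t$: one must verify rigorously that the $P$-compensator of $Y^u_{\cdot + t}$ with respect to the right-continuous natural filtration of $X_{\cdot + t}$ corresponds, after push-forward by $\theta_t$, to the $\Phi(P)$-compensator of $Y^u$ with respect to the right-continuous natural filtration of $X$, and that Standing Assumption~\ref{SA: markov} is precisely what interchanges the two set-valued density conditions. Beyond this careful accounting no deeper machinery is needed, since the Markovian form $\Theta(t, \omega) \equiv \Theta(\omega(t-))$ is tailor-made to make the shift operation preserve the defining constraints of $\cA$ and $\cR$.
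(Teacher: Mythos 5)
Your proposal is correct and follows essentially the same route as the paper: the DPP of Theorem~\ref{theo: DPP} with the deterministic time $\tau\equiv t$, combined with the spatial identity $\cE_t(\psi\circ\theta_t)(\omega)=\cE^{\omega(t)}(\psi)$, which the paper packages as Lemma~\ref{lem: needed for MP} in the form $\cK(0,\omega(t))=\{P_t\colon P\in\cC(t,\omega)\}$, your forward map being $P\mapsto P_t$ and your backward map being $Q\mapsto\delta_{\omega}\otimes_t Q^t$. The compensator bookkeeping you flag as the main remaining obstacle is exactly what the paper resolves via the transfer result of Lemma~\ref{lem: jacod restatements} together with Lemmata~\ref{lem: implication c^*}, \ref{lem: p^t} and \ref{lem: iwie Markov}.
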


To the best of our knowledge, in the context of nonlinear stochastic processes, the nonlinear Markov property was first observed in \cite[Lemma 4.32]{hol16} for nonlinear Markovian semimartingales, see also \cite[Proposition~2.8]{CN22b} for a setting with continuous paths.

Similar to the fact that linear Markov processes have a canonical relation to linear semigroups, nonlinear Markov processes can be related to nonlinear semigroups.

\begin{definition} \label{def: nonlinear MSG}
Let \( \mathcal{H} \) be a convex cone of functions \( f \colon F \to \bR \) containing all constant functions.
A family of sublinear operators \( T_t \colon \mathcal{H} \to \mathcal{H}, \hspace{0,05cm} t \in \bR_+,\) is called a \emph{sublinear Markovian semigroup} on \( \mathcal{H} \) if it satisfies the following properties:
\begin{enumerate} [leftmargin=1cm]
    \item[\textup{(i)}] \( (T_t)_{t \geq 0} \) has the semigroup property, i.e.,
          \( T_s T_t = T_{s+t} \) for all \(s, t \in \bR_+ \) and
          \( T_0 = \on{id} \);
    \item[\textup{(ii)}] \( T_t \) is monotone for each \( t \in \bR_+\), i.e., 
    \( f, g \in \mathcal{H} \) with \( f \leq g \) implies \(T_t f \leq T_t g \);
    \item[\textup{(iii)}] \( T_t \) preserves constants for each  \( t \in \bR_+\), i.e.,
    \( T_t(c) = c \) for each \( c \in \bR  \).
\end{enumerate}
\end{definition}
For a bounded upper semianalytic function \(\psi \colon F \to \bR, x \in F\) and \(t \in \mathbb{R}_+\), we set 
\begin{equation} \label{eq: semigroup}
    T_t(\psi)(x) \triangleq \cE^x(\psi(X_t)) = \sup_{P \in \cR(x)} E^P \big[ \psi(X_t) \big].
\end{equation}
The following proposition should be compared to \cite[Remark 4.33]{hol16} and \cite[Proposition~2.9]{CN22b}, where it has been established for nonlinear Markovian semimartingale frameworks. Its proof can be found in Section \ref{sec: pf SG}.
\begin{proposition} \label{prop: semigroup}
The family \( (T_t)_{t \geq 0} \) defines a sublinear Markovian semigroup on the set of bounded upper semianalytic functions from \(F\) into \(\bR\).
\end{proposition}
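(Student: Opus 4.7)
The plan is to verify the three axioms (i)--(iii) of Definition~\ref{def: nonlinear MSG} together with sublinearity and, crucially, the fact that $T_t$ maps the cone $\mathcal{H}$ of bounded upper semianalytic functions $F \to \bR$ into itself. Most properties will follow essentially immediately from the definition of $T_t$ as a supremum of linear expectations, and the semigroup property will be a direct consequence of the nonlinear Markov property established in Proposition~\ref{prop: markov property}.

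First, I would verify that $T_t \psi \in \mathcal{H}$ for every $\psi \in \mathcal{H}$. Boundedness is immediate, since $|T_t(\psi)(x)| \leq \|\psi\|_\infty$. For upper semianalyticity, the map $X_t \colon \Omega \to F$ is Borel, so $\omega \mapsto \psi(X_t(\omega))$ is upper semianalytic on $\Omega$ because preimages of analytic sets under Borel maps are analytic. Applying Theorem~\ref{theo: DPP} to this payoff shows that the associated value function $v$ is upper semianalytic on $\of 0, \infty \of$. Since $\cC(0, \omega)$ depends only on $\omega(0)$, Standing Assumption~\ref{SA: markov} gives $\cC(0, \omega) = \cR(\omega(0))$, and hence $T_t(\psi)(x) = v(0, c_x)$ where $c_x \in \Omega$ denotes the constant path at $x$. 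As the embedding $F \ni x \mapsto c_x \in \Omega$ is continuous, the composition $x \mapsto v(0, c_x)$ is upper semianalytic on $F$.

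With well-definedness in place, monotonicity and preservation of constants are immediate: $\psi_1 \leq \psi_2$ yields $E^P[\psi_1(X_t)] \leq E^P[\psi_2(X_t)]$ for every $P$, while $T_t(c)(x) = c$ follows because $\cR(x)$ is nonempty by Standing Assumption~\ref{SA: non empty}. Positive homogeneity and subadditivity are inherited from the corresponding properties of a supremum of linear expectations, giving sublinearity of each $T_t$. For the semigroup property, $T_0 = \on{id}$ follows from $P(X_0 = x) = 1$ for $P \in \cR(x)$. For $s, t \geq 0$, I would apply Proposition~\ref{prop: markov property} to the upper semianalytic functional $\psi(\omega) \triangleq \tilde\psi(\omega(s))$ with $\tilde\psi \in \mathcal{H}$. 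Then $\psi \circ \theta_t = \tilde\psi(X_{s+t})$, so $\cE^x(\psi \circ \theta_t) = T_{s+t}(\tilde\psi)(x)$, while $\cE^{X_t}(\psi) = T_s(\tilde\psi)(X_t)$ pointwise, and therefore $\cE^x(\cE^{X_t}(\psi)) = T_t(T_s(\tilde\psi))(x)$. Equating both sides via Proposition~\ref{prop: markov property} produces $T_{s+t} = T_t T_s$.

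I do not anticipate any substantial obstacle, since Theorem~\ref{theo: DPP} and Proposition~\ref{prop: markov property} do the heavy lifting. The only care needed lies in executing the steps in the right order: one must verify $T_s(\tilde\psi) \in \mathcal{H}$ before the iteration $T_t(T_s(\tilde\psi))$ is even meaningful, which is precisely what the first step supplies. This ordering also makes clear why the Markovian reduction $\cC(0, \omega) = \cR(\omega(0))$ is essential---it is what transports the upper semianalyticity output of the DPP from $\Omega$ back to~$F$.
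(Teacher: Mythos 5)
Your proposal is correct, and the treatment of monotonicity, constant preservation, sublinearity, and the semigroup identity via Proposition \ref{prop: markov property} coincides with the paper's proof (which simply declares (ii) and (iii) trivial and derives (i) from the nonlinear Markov property). Where you genuinely diverge is in the well-definedness step, i.e.\ the upper semianalyticity of \(x \mapsto T_t(\psi)(x)\). The paper argues this directly: it introduces the auxiliary function \(\phi(x,\omega) = \psi(\omega(t))\1_{\{\omega(0)=x\}} + (-\infty)\1_{\{\omega(0)\neq x\}}\), which is jointly upper semianalytic by \cite[Lemma 7.30]{bershre}, shows that the union \(\cK\) of all sets \(\cC(0,\omega)\) is analytic as a projection of the Borel graph from Corollary \ref{coro: MGC}, writes \(T_t(\psi)(x) = \sup_{P\in\cK} E^P[\phi(x,X)]\), and invokes \cite[Propositions 7.47 and 7.48]{bershre}. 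You instead reuse the first assertion of Theorem \ref{theo: DPP} (upper semianalyticity of the value function on \(\of 0,\infty\of\)), identify \(\cC(0,\omega)=\cR(\omega(0))\) under Standing Assumption \ref{SA: markov}, and pull the value function back to \(F\) along the Borel embedding \(x\mapsto c_x\) by constant paths. Both routes are legitimate and ultimately rest on the same Bertsekas--Shreve machinery (the DPP's measurability claim is itself proved that way via the measurable graph condition); yours is shorter because it leverages an already-established result, while the paper's is self-contained at that point and does not pass through the DPP. Your closing remark about the ordering --- establishing \(T_s(\tilde\psi)\in\mathcal{H}\) before forming \(T_t(T_s(\tilde\psi))\) --- is exactly the right point of care.
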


		Linear semigroups with suitable regularity properties are well-known to be uniquely characterized by their (infinitesimal) generators. 
		For nonlinear L\'evy processes, a unique characterization of their nonlinear semigroups via (pointwise) generators has been established in \cite[Proposition~6.5]{K21}. 
		It is very interesting to prove such a result also for other classes of nonlinear processes such as NSPDEs. 
		We leave this question for future investigations. 

In the following section we establish a regularity preservation property of \((T_t)_{t \geq 0}\) in a continuous path setting. More precisely, we extend Proposition~\ref{prop: semigroup} via conditions for \((T_t)_{t \geq 0}\) to be a sublinear Markovian semigroup on the space of bounded upper semicontinuous functions from \(F\) into \(\bR\).

\subsection{The \(\usc_b\)--Feller Property} \label{sec: 3.3}

It is natural to ask whether the sublinear Markovian semigroup \((T_t)_{t \geq 0}\) preserves some regularity. 
In case \(F\) is endowed with the discrete topology, as it is the case for the class of nonlinear Markov chains from Section \ref{sec: nonlinear MC}, it is trivial that \(T_t\) is a selfmap on the space of bounded continuous functions for every \(t \in \bR_+\). In general, however, such a preservation property is non-trivial to establish. In the following we provide general conditions for a type of regularity preservation property of nonlinear stochastic processes with \emph{continuous} paths, namely the so-called \emph{\(\usc_b\)--Feller property}. Before we formulate our conditions, let us introduce a last bit of notation. We fix an \(\bR_+\)-valued continuous adapted process \(L = (L_t)_{t \geq 0}\) on \((\Omega, \cF, \mathbf{F})\) and, for \(M > 0\), we set 
\[
\rho_M (\omega) \triangleq \inf \{t \geq 0 \colon L_t \geq M\} \wedge M, \quad \omega \in \Omega.
\]
When \(\Omega\) is the Wiener space of continuous paths, which will be assumed below, a typical choice for \(L\) could be \(d_F (X, y_0)\), where \(d_F\) is a metric on \(F\) and \(y_0 \in F\) is an arbitrary reference point.
\begin{condition} \label{cond: sammel}
\quad 
\begin{enumerate}[leftmargin=1cm]
\item[\textup{(i)}] The underlying path space \(\Omega\) is the space of all continuous functions \(\bR_+ \to F\), for every \(u \in U\), the process \(Y^u\) has continuous paths and \(\Omega \ni \omega \mapsto Y^u(\omega) \in C(\bR_+; \mathbb{K})\) and \(\Omega \ni \omega \mapsto L (\omega) \in C(\bR_+; \bR)\) are continuous (where the image and the inverse image spaces are endowed with the local uniform topology). Furthermore, for every \(u \in U, M > 0\) and any compact set \(K \subset F\), the process \(Y^u_{\cdot \wedge \rho_M} \1_{\{X_0 \in K\}}\)
is bounded.
\item[\textup{(ii)}] The correspondence \(\Theta \colon F \twoheadrightarrow \bK^U\) is convex-valued.
\item[\textup{(iii)}] For every \(M> 0\), there exists a family \(\{K^{u, M} \colon u \in U\} \subset \mathbb{K}\) of bounded sets such that \(\Theta (\omega (t)) \subset \prod_{u \in U} K^{u, M}\) for all \((t, \omega)\in  \ \gs 0, \rho_M\of\). Moreover, for every compact set \(K \subset F\) and every \(T > 0\), 
\begin{align} \label{eq: moment bound relax}
\lim_{M\to \infty} \sup_{x \in K} \sup_{P \in \cR(x)} P \Big( \sup_{s \in [0, T]} L_s \geq M \Big) = 0.
\end{align}
\item[\textup{(iv)}] For every \(\omega \in \Omega\), the correspondence \(\bR_+ \ni t \mapsto \Theta (\omega (t))\) is upper hemicontinuous and compact-valued, and, for every \(t \in \bR_+\) and \(m \in \mathbb{N}\), the correspondence
\[\omega \mapsto \oconv \Theta ([t, t + 1/m], \omega) \triangleq \oconv \Big[\bigcup_{s \in [t, t + 1/m]} \Theta (\omega (s)) \Big]\] 
is upper hemicontinuous and compact-valued. Here, \(\oconv \hspace{-0.1cm}\) denotes the closure of the convex hull.
\item[\textup{(v)}] For every compact set \(K \subset F\), the set \(\bigcup_{x \in K} \cR (x) \subset \mathfrak{P}(\Omega)\) is relatively compact.
\end{enumerate}
\end{condition}

For nonlinear one-dimensional diffusions, a version of the following theorem was established in~\cite{CN22b}. Moreover, for nonlinear semimartingales \emph{with jumps} some conditions were proved in \cite[Theorem~4.41, Lemma~4.42]{hol16}.\footnote{It seems that there is a gap in the proof of \cite[Lemma 4.42]{hol16}. Indeed, it is claimed that the map \(\omega \mapsto \omega (t)\) is upper semicontinuous on the Skorokhod space. However, this is not the case. Indeed, by linearity, upper semicontinuity would already imply continuity, which is false. }
	 In the context of controlled diffusions, upper semicontinuity of the value function has been established in \cite{nicole1987compactification}.
We provide a result which reaches beyond semimartingale settings. 
Its proof can be found in Section \ref{sec: pf USC FP}.

\begin{theorem} \label{thm: USC Feller property}
Suppose that Condition \ref{cond: sammel} holds. Then, \((T_t)_{t \geq 0}\) is a sublinear Markovian semigroup on the space \(\usc_b (F; \bR)\) of bounded upper semicontinuous functions~\(F \to \mathbb{R}\).
\end{theorem}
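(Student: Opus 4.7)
The plan is to show that $T_t \psi \in \usc_b(F; \bR)$ for every $\psi \in \usc_b(F; \bR)$ and $t \geq 0$. Boundedness is immediate from $|T_t \psi| \leq \|\psi\|_\infty$, so the real task is upper semicontinuity in $x$. I would deduce it from a Berge-type maximum principle: it suffices to show (A) that $\mathfrak{P}(\Omega) \ni P \mapsto E^P[\psi(X_t)]$ is upper semicontinuous, and (B) that the correspondence $F \ni x \twoheadrightarrow \cR(x) \subset \mathfrak{P}(\Omega)$ is upper hemicontinuous with nonempty compact values. Claim (A) is routine: by Condition \ref{cond: sammel} (i), $X_t$ is continuous on $\Omega$, so $\psi \circ X_t$ is bounded upper semicontinuous on $\Omega$, and the portmanteau theorem yields (A). For (B), nonemptiness of $\cR(x)$ is Standing Assumption \ref{SA: non empty}. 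The heart of the argument is the following stability lemma: \emph{if $x_n \to x$ in $F$ and $P_n \in \cR(x_n)$ converges weakly to $P$, then $P \in \cR(x)$.} Granted this, (B) follows: applied to the compact set $\{x\} \cup \{x_n\}$, Condition \ref{cond: sammel} (v) gives relative compactness of $\bigcup_n \cR(x_n)$, and the stability lemma promotes weak subsequential limits to elements of $\cR(x)$, yielding upper hemicontinuity; the same argument with a constant sequence gives closedness, hence compactness, of $\cR(x)$.

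To prove the stability lemma, $P(X_0 = x) = 1$ is immediate from weak convergence and continuity of $\omega \mapsto \omega(0)$. The substantive task is to verify that $Y^u \in \fSac(P)$ for every $u \in U$ with $(d\A^P(Y^u)/d\llambda)_{u \in U} \in \Theta(X)$ holding $(\llambda \otimes P)$-a.e. I would localize at $\rho_M$: Condition \ref{cond: sammel} (iii) bounds the compensator densities $(\zeta^{u, P_n})_{u \in U}$ into $\prod_u K^{u, M}$ on $[0, \rho_M]$, and Condition \ref{cond: sammel} (i) bounds $Y^u_{\cdot \wedge \rho_M}$ on $\{X_0 \in K\}$. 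Hence, for each $u$, $M^{P_n, u} \triangleq Y^u_{\cdot \wedge \rho_M} - Y^u_0 - \A^{P_n}(Y^u_{\cdot \wedge \rho_M})$ is a bounded $P_n$-martingale with equi-Lipschitz finite-variation part. The joint laws of $\big(X, (\A^{P_n}(Y^u_{\cdot \wedge \rho_M}))_{u \in U}\big)$ on $\Omega \times C(\bR_+; \bK^U)$ are tight by Ascoli--Prohorov combined with Condition \ref{cond: sammel} (v) and the countability of $U$. Passing to a weakly convergent subsequence and invoking the Skorokhod representation, the limit of each finite-variation component becomes $\int_0^\cdot \zeta^u_s ds$ for some bounded density $\zeta^u$; passing to the limit in the identity $E^{P_n}[(M^{P_n, u}_t - M^{P_n, u}_s)\phi] = 0$ against bounded continuous $\cF_s$-measurable $\phi$ shows that $Y^u_{\cdot \wedge \rho_M} - Y^u_0 - \int_0^{\cdot \wedge \rho_M} \zeta^u_s ds$ is a $P$-martingale. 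Finally, sending $M \to \infty$, justified by the $L^q$ bound in Condition \ref{cond: sammel} (iii) via Markov's inequality, extends the conclusion to unstopped processes.

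The main obstacle is verifying the pointwise density constraint $(\zeta^u_s)_{u \in U} \in \Theta(X_s)$ for $(\llambda \otimes P)$-a.e.\ $(s, \omega)$, for which Conditions \ref{cond: sammel} (ii) and (iv) are tailor-made. The idea is that for small $h > 0$,
\[
\frac{1}{h} \int_s^{s+h} \big(\zeta^{u, P_n}_r\big)_{u \in U} dr \in \oconv \Theta\big(X([s, s+h])\big)
\]
by convexity of $\Theta$; the upper hemicontinuity of $t \mapsto \Theta(\omega(t))$ and compactness of $\oconv \Theta(X([s, s+h]))$ from Condition \ref{cond: sammel} (iv) then allow one to take the double limit $n \to \infty$ and $h \downarrow 0$, constraining the limiting tuple $(\zeta^u_s)_{u \in U}$ to lie in $\Theta(X_s)$. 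Making this averaging-and-compactness argument fully rigorous, in particular controlling the $(\llambda \otimes P)$-null exceptional sets uniformly through the double limit, is the classical technical crux of martingale problem stability and the step where the most care is required.
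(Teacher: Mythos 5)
Your proposal follows essentially the same route as the paper: the theorem is reduced via Berge's maximum theorem to upper hemicontinuity and compactness of \(x \mapsto \cR(x)\), and the latter rests on exactly the stability/closedness lemma you isolate (Proposition \ref{prop: closedness} in the paper), proved by the same tightness--Skorokhod--averaging scheme using parts (i)--(v) of Condition \ref{cond: sammel}. The only step you elide beyond the crux you flag yourself is that the limiting density \(\zeta^u\) lives on the enlarged product space and is not adapted to the canonical filtration of \(X\), so identifying \(\A^P(Y^u)\) requires a dual predictable projection followed by one more application of the convexity of \(\Theta\); this is handled in Step 4 of the paper's proof of Proposition \ref{prop: closedness} and fits within your framework.
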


\begin{remark} \label{rem: SV example}
	It is interesting to note that Condition \ref{cond: sammel} is not sufficient for the \(C_b\)--Feller property of \((T_t)_{t \geq 0}\), i.e., it does not imply \(T_t (C_b( F; \bR )) \subset C_b (F; \bR )\) for all \(t > 0\). 
	A counterexample is given in \cite[Exercise~12.4.2]{stroock2007multidimensional}. 
		In fact, the counterexample reveals an interesting effect when passing from linear to nonlinear settings. To explain this, we recall the example.
			Let \(b \colon \bR \to \bR\) be a bounded continuous function such that \(b (x) = \on{sgn} (x) \sqrt{|x|}\) for \(|x| \leq 1\) and which is continuously differentiable off \((-1, 1)\). For \(x \in \bR\), define
			\[
			\cR (x) \triangleq \Big\{ \delta_{g} \colon g \in C^1 (\bR_+; \bR),\ d g (t) = b (g (t)) dt, \, g (0) = x \Big\} \subset \mathfrak{P}(C(\bR_+; \bR)).
			\]
			In other words, the set \(\cR(x)\) contains all laws of solutions to the (deterministic) ordinary differential equation 
			\[
			d Y_t = b (Y_t) d t, \quad Y_0 = x.
			\]
			Notice that this is a special case of the NSPDE framework discussed in Section~\ref{sec: nonlinear SPDE}. We say that a family \((P_x)_{x \in \bR} \subset \mathfrak{P}(C(\bR_+; \bR))\) is \(C_b\)--Feller if the map \(x \mapsto E^{P_x} [ f (X_t) ]\) is continuous for every \(f \in C_b (\bR; \bR)\) and \(t > 0\).
			According to \cite[Exercise~12.4.2]{stroock2007multidimensional}, it is not possible to select a \(C_b\)--Feller family \((P_x)_{x \in \mathbb{R}}\) such that \(P_x \in \cR (x)\). By the linearity of the expectation map \(P \mapsto E^P[\, \cdot\,]\), it is easy to see that a family \((P_x)_{x \in \mathbb{R}}\) of probability measures is a \(C_b\)--Feller family if and only if it is a \(\usc_b\)--Feller family in the sense that \(x \mapsto E^{P_x} [ f (X_t) ]\) is upper semicontinuous for every \(f \in \usc_b (\bR; \bR)\) and \(t > 0\). Thus, \cite[Exercise~12.4.2]{stroock2007multidimensional} also tells us that it is not possible to select a \(\usc_b\)--Feller family \((P_x)_{x \in \mathbb{R}}\) such that \(P_x \in \cR (x)\). 
			However, Theorem~\ref{thm: USC Feller property} (or Corollary~\ref{coro:SMSP NSPDE} below) shows that the nonlinear semigroup 
			\[
			T_t (\psi) (x) = \sup_{P \in \cR (x)} E^P \big[ \psi (X_t) \big]
			\]
			has the \(\usc_b\)--Feller property (in the sense that \((T_t)_{t \geq 0}\) is a nonlinear semigroup on the cone \(\usc_b(\bR; \bR)\)). Broadly speaking, passing from the linear to the nonlinear setting provides a smoothing effect in the sense that the nonlinear semigroup has the \(\usc_b\)--Feller property although it is not possible to select a linear semigroup with this property from the uncertainty set.
\end{remark}
In Section \ref{sec: NSPDE} below, we return to the class of NSPDEs from Section \ref{sec: nonlinear SPDE} and we provide some general parametric conditions which imply Condition \ref{cond: sammel}. From a practical point of view, (i) -- (iv) from Condition \ref{cond: sammel} are mainly structural assumptions, while checking part (v) requires an argument.

\subsection{The strong Markov selection principle} \label{sec: 3.4}

For a probability measure \(P\) on \((\Omega, \mathcal{F})\), a kernel \(\Omega \ni \omega \mapsto Q_\omega \in \mathfrak{P}(\Omega)\), and a finite stopping time \(\tau\), we define the pasting measure
\begin{align} \label{eq: pasting measure}
(P \otimes_\tau Q) (A) \triangleq \iint \1_A (\omega \otimes_{\tau(\omega)} \omega') Q_\omega (d \omega') P(d \omega)
\end{align}
for all \(A \in \cF\).

\begin{definition}[Time inhomogeneous Markov Family]
	A family \(\{P_{(s, x)} \colon (s, x) \in \bR_+ \times F\} \subset \mathfrak{P}(\Omega)\) is said to be a \emph{strong Markov family} if \((t, x) \mapsto P_{(t, x)}\) is Borel and the strong Markov property holds, i.e., for every \((s, x) \in \bR_+ \times F\) and every finite stopping time \(\tau \geq s\),
	\[
	P_{(s, x)} (\, \cdot\, | \cF_\tau) (\omega) = \omega \otimes_{\tau (\omega)} P_{(\tau (\omega), \omega (\tau (\omega)))}
	\]
	for \(P_{(s, x)}\)-a.a. \(\omega \in \Omega\).
\end{definition}

We introduce a correspondence \(\mathcal{K} \colon \bR_+ \times F \twoheadrightarrow \mathfrak{P}(\Omega)\) by 
\begin{align*}
\cK (t, x) \triangleq \Big\{ P \in \mathfrak{P} (\Omega) \colon P(X &= x \text{ on } [0, t]) = 1, \forall_{u \in U} \ Y^u \in \fSac (t, P), \\& \qquad
(\llambda \otimes P)\text{-a.e. } (d \A^P (Y^u_{\cdot + t})/d \llambda)_{u \in U} \in \Theta (\, \cdot + t, X) \Big\},
\end{align*}
where \((t, x) \in \bR_+ \times F\). The following theorem is proved in Section \ref{sec: pf MSP}.

\begin{theorem}[Strong Markov Selection Principle]  \label{theo: strong Markov selection}
	Suppose that Condition~\ref{cond: sammel} holds.
	For every \(\psi \in \usc_b(\bR; F)\) and every \(t > 0\), there exists a strong Markov family \(\{P_{(s, x)} \colon\) \((s, x) \in \bR_+ \times F\}\) such that, for all \((s, x)\in \bR_+ \times F\), \(P_{(s, x)} \in \cK (s, x)\) and 
	\[
	E^{P_{ (s, x) }} \big[ \phi (X_t) \big] = \sup_{P \in \cK (s, x)} E^P \big[ \phi (X_t) \big].
	\]
	In particular, for every \(x \in F\), 
	\[
	T_t (\psi) (x) = E^{P_{(0, x)}} \big[ \psi (X_t) \big]. 
	\]
\end{theorem}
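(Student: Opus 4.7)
The plan is to follow the classical Markov selection technique of Krylov and Stroock--Varadhan, adapted to controlled martingale problems in the spirit of \cite{hausmann86, nicole1987compactification}. The strategy is to iteratively shrink the correspondence $\cK(s,x)$ by countably many optimality constraints, preserving at each step two key stability properties -- stability under conditioning and stability under concatenation -- so that the ultimate singleton selection is automatically strongly Markov.

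First I would establish four properties of $\cK$: (a) the graph of $\cK$ is Borel in $\bR_+ \times F \times \mathfrak{P}(\Omega)$, which follows from Standing Assumption \ref{SA: meas gr} and the measurability arguments underlying Theorem \ref{theo: DPP}; (b) for each compact $K \subset F$ and each $s \in \bR_+$ the set $\bigcup_{x \in K} \cK(s,x)$ is compact, with relative compactness given by Condition \ref{cond: sammel}(v) and closedness following from a standard martingale-problem stability argument using Condition \ref{cond: sammel}(i),(iii),(iv); (c) stability under conditioning, i.e.\ $P(\cdot \mid \cF_\tau)(\omega) \in \cK(\tau(\omega), \omega(\tau(\omega)))$ for $P$-a.a.\ $\omega$; and (d) stability under concatenation, i.e.\ $P \otimes_\tau Q \in \cK(s,x)$ whenever $P \in \cK(s,x)$ and $\omega \mapsto Q_\omega$ is a Borel kernel with $Q_\omega \in \cK(\tau(\omega),\omega(\tau(\omega)))$. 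Properties (c) and (d) are the same stability properties that underpin the proof of the DPP (Theorem \ref{theo: DPP}) and they transfer to $\cK$ with minor modifications, using that the defining condition ``$Y^u \in \fSac(\cdot,P)$ with compensator density valued in $\Theta$'' is preserved under restriction to regular conditional probabilities and under pasting.

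Given (a)--(d), the selection is built as follows. Fix $\psi \in \usc_b(F;\bR)$ and $t > 0$. Choose a countable family $\{f_n\}_{n \geq 1} \subset C_b(F;\bR)$ and rationals $\{t_n\}_{n \geq 1}$ such that the functionals $\Phi_n(P) \triangleq E^P[f_n(X_{t_n})]$ separate points of $\mathfrak{P}(\Omega)$, and set $\Phi_0(P) \triangleq E^P[\psi(X_t)]$. Define recursively
\[
\cK_0 \triangleq \cK, \qquad \cK_{n+1}(s,x) \triangleq \Big\{ P \in \cK_n(s,x) : \Phi_n(P) = \sup_{Q \in \cK_n(s,x)} \Phi_n(Q) \Big\}.
\]
Upper semicontinuity of $\Phi_0$ (via upper semicontinuity of $\psi$ and continuity of evaluation at $t$ on the path space) and continuity of $\Phi_n$ for $n \geq 1$, combined with the compactness in (b), force every $\cK_n(s,x)$ to be nonempty and compact; an induction keeps the graph Borel and preserves (c) and (d). The limit $\cK_\infty(s,x) \triangleq \bigcap_n \cK_n(s,x)$ is therefore a nonempty compact set and, by the separation property of $\{\Phi_n\}$, a singleton $\{P_{(s,x)}\}$. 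Borel measurability of $(s,x) \mapsto P_{(s,x)}$ follows from the Borel graph of $\cK_\infty$ via the Kuratowski--Ryll-Nardzewski selection theorem, while the strong Markov property follows because (c) and (d) for $\cK_\infty$ force the identity $P_{(s,x)}(\cdot \mid \cF_\tau)(\omega) = P_{(\tau(\omega), \omega(\tau(\omega)))}$ for any finite stopping time $\tau \geq s$. The final displayed identity $T_t(\psi)(x) = E^{P_{(0,x)}}[\psi(X_t)]$ is then immediate from $P_{(0,x)} \in \cK_1(0,x)$ together with the definition of $T_t$ in \eqref{eq: semigroup}.

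The main technical obstacle is carrying the stability properties (c) and (d) through the iterative refinement. Concretely, one must show that the optimality constraint for $\Phi_n$ over $\cK_n$ admits a DPP-type reformulation with respect to the stopping times used in the conditioning/pasting operations, so that measures obtained by pasting optimal kernels can be recognized as still optimal. This conditional reformulation is exactly the device Krylov introduces for his original diffusion selection, but its adaptation here -- in particular establishing that the sup-functional $(s,x) \mapsto \sup_{Q \in \cK_n(s,x)} \Phi_n(Q)$ is Borel measurable and that the absolutely continuous compensator condition built into $\cK$ is compatible with the pasting operation $P \otimes_\tau Q$ -- demands the most care, and is where the technical assumptions in Condition \ref{cond: sammel} (especially the compactness in (iv) and the moment bound in (iii)) are used to their fullest.
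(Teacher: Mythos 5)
Your overall strategy is exactly the paper's: refine $\cK$ by a countable sequence of arg-max constraints (first for $\psi(X_t)$, then for auxiliary functionals), keep measurability, compactness and the two stability properties (conditioning and pasting) through each refinement via a lemma of the type of Lemma \ref{lem: U to U*}, and read off the strong Markov property of the singleton limit from stability under conditioning. However, there is one genuine gap at the step where you conclude that $\cK_\infty(s,x)$ is a singleton. You choose functionals $\Phi_n(P)=E^P[f_n(X_{t_n})]$ with $f_n\in C_b(F;\bR)$ and claim they ``separate points of $\mathfrak{P}(\Omega)$''. No countable (indeed, no) family of functionals depending only on one-dimensional time marginals can separate points of $\mathfrak{P}(\Omega)$: two distinct laws on path space can share all one-dimensional marginals. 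So the separation property you invoke is false, and with it the asserted singleton-valuedness does not follow as stated. (Replacing the $\Phi_n$ by functionals of finite-dimensional cylinders would restore separation but would destroy the compatibility with the conditioning/pasting operations that the refinement scheme relies on, so that is not a repair.)

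The missing ingredient is the induction that the paper carries out in the proof of Theorem \ref{theo: strong Markov selection}: after the refinements one only knows that any $P,Q\in\cU_\infty(t,x)$ satisfy $P\circ X_s^{-1}=Q\circ X_s^{-1}$ for all $s$, and one must upgrade this to equality of all finite-dimensional distributions by induction over the number of time points. This step uses stability under conditioning of $\cU_\infty$ \emph{together with} the convexity and compactness of its values: one conditions on $\mathcal{G}_n=\sigma(X_{t_1},\dots,X_{t_n})$ (not on $\cF_{t_n}$), and the identity \eqref{eq: convex computation} expresses $\delta_{\omega(t_n)}\otimes_{t_n}P(\cdot\,|\,\mathcal{G}_n)(\omega)$ as a mixture of elements of $\cU_\infty(t_n,\omega(t_n))$, so convexity and closedness of the values are needed to conclude that this conditional law again lies in $\cU_\infty(t_n,\omega(t_n))$, after which the induction base (equality of one-dimensional marginals) applies. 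Your proposal never records convexity of $\cK$ (the paper's Lemma \ref{lem: r^* compact}, which rests on Condition \ref{cond: sammel}(ii)) nor the need to propagate convexity through the refinements, and without this the uniqueness — and hence the very definition of the selection — is not established. Apart from this, your outline of the preparatory properties (Borel graph, compactness via Condition \ref{cond: sammel}(v) plus closedness, stability under conditioning and pasting) and of the final identification $T_t(\psi)(x)=E^{P_{(0,x)}}[\psi(X_t)]$ matches the paper.
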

In a relaxed framework for finite-dimensional controlled diffusions, a strong Markov selection principle has been established in \cite{nicole1987compactification}.
A strong Markov selection principle for nonlinear one-dimensional diffusions was proved in \cite{CN22b}. Theorem \ref{theo: strong Markov selection} covers the result from \cite{CN22b} as a special case (see Section~\ref{sec: NSPDE} below).

In the next section we tailor the Theorems \ref{thm: USC Feller property} and \ref{theo: strong Markov selection} to the NSPDE setting from Section~\ref{sec: nonlinear SPDE}. Furthermore, we are going to discuss the strong \(\usc_b\)--Feller property, which provides a smoothing effect.

\section{NSPDEs: \(\usc_b\)--Feller Properties and Strong Markov Selections} \label{sec: NSPDE}
We return to the class of nonlinear SPDEs from Section \ref{sec: nonlinear SPDE} and provide explicit conditions (in terms of the drift and diffusion coefficients) for the \(\usc_b\)--Feller property and the strong Markov selection principle. 

We recall the precise setting.
Let \((H_1, \|\cdot\|_{H_1}, \langle\, \cdot, \cdot\, \rangle_{H_1})\) and \((H_2, \|\cdot\|_{H_2}, \langle\, \cdot, \cdot\,\rangle_{H_2})\) be two separable Hilbert spaces over the real numbers, and let \((A, D(A))\) be the generator of a \(C_0\)-semigroup \((S_t)_{t \geq 0}\) on the Hilbert space \(H_2\).
Moreover, let \(\Omega\) be the space of all continuous functions \(\bR_+ \to F\), let \(G\) be a topological space and let \(\mu \colon G \times H_2 \to H_2\) and \(\sigma \colon G \times H_2 \to L (H_1, H_2)\) be Borel functions, where, for the latter, we mean that \(\sigma h_1\) is a Borel function for every \(h_1 \in H_1\).
Further, let \(D \subset D (A^*)\) be a countable set such that for every \(y \in D(A^*)\) there exists a sequence \((y_n)_{n = 1}^\infty \subset D\) such that \(y_n \to y\) and \(A^* y_n \to A^* y\). 
Recall that such a set exists by \cite[Lemma 7.3]{criens22}.
We set \(f_i (x) \triangleq x^i\) for \(x \in \bR\) and \(i = 1,2\), 
\[U \triangleq \big \{ (H_2 \ni x \mapsto f_i (\langle y, x \rangle_{H_2} )) \colon y \in D, i = 1, 2 \big\}, \qquad Y^u \triangleq u (X),\ u \in U,\] 
and 
\begin{align*}
\mathscr{L}^{u, g} (x) \triangleq \big( \langle A^* y, x \rangle_{H_2} + \langle y, \mu (g, x) \rangle_{H_2} \big) f_i' ( \langle y, x\rangle_{H_2} ) + \tfrac{1}{2} \| \sigma^* (g, x) y \|^2_{H_1} f''_i (\langle y, x\rangle_{H_2}),
\end{align*}
for \((x, g)\in H_2\times G\) and \(u = f_i (\langle y, \cdot\, \rangle_{H_2}) \in U\). 
Finally, for \(x \in H_2\) and \((t, \omega) \in \of 0, \infty\of\), we define  
\[
\Theta (x) \triangleq \big\{ ( \mathscr{L}^{u, g} ( x ) )_{u \in U} \colon g \in G \big\}, \qquad \Theta (t, \omega) \triangleq \Theta (\omega (t)).
\]
We denote the operator norm on \(L (H_1, H_2)\) by \(\|\cdot\|_{L (H_1, H_2)}\) and the Hilbert--Schmidt norm by \(\|\cdot \|_{L_2(H_1, H_2)}\).
\begin{condition} \label{cond: main SPDE}
	\quad 
	\begin{enumerate} [leftmargin=1cm]
		\item[\textup{(i)}]
		\(G\) is a compact metrizable space.
		\item[\textup{(ii)}] 
		For every \(x \in H_2\), the set  
		\(\{ (\mu (g, x), (\sigma \sigma^*)(g, x)) \colon g \in G\}
		\)
		is convex.
		\item[\textup{(iii)}]
		For every \(y \in D\), the functions
		\(\langle y, \mu \rangle_{H_2}\) and \(\|\sigma^* y\|_{H_1}\) are continuous.
		\item[\textup{(iv)}]
		There exists a constant \(\C> 0\) such that 
		\begin{align*}
		\|\mu (g, x)\|_{H_2} + \|\sigma (g, x)\|_{L (H_1, H_2)} \leq \C (1 + \|x\|_{H_2})
		\end{align*}
		for all \((g, x) \in G \times H_2\).
		\item[\textup{(v)}]
		The semigroup \(S\) is compact, i.e., \(S_t\) is compact for every \(t > 0\), and there exists an \(\alpha \in (0, 1/2)\) and a Borel function \(\f \colon (0, \infty) \to [0, \infty]\) such that
		\[
		\int_0^T \Big[ \frac{\f (s)}{s^\alpha} \Big]^2 ds < \infty, \quad \forall \hspace{0.025cm}T > 0, 
		\]
		and, for all \(t > 0, x \in H_2\) and \(g \in G\), 
		\[
		\|S_t \sigma (g, x)\|_{L_2 (H_1, H_2)} \leq \f (t) (1 + \|x\|_{H_2}).
		\]
	\end{enumerate}
\end{condition}
\begin{lemma} 
	If Condition \ref{cond: main SPDE} holds, then both Standing Assumptions \ref{SA: meas gr} and \ref{SA: non empty} are satisfied.
 \end{lemma}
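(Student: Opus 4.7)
The plan is to verify the two standing assumptions separately, with the bulk of the work concentrated on the non-emptiness claim.

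For the measurable graph condition (Standing Assumption \ref{SA: meas gr}), I would first note that $\Theta(t, \omega) = \Theta(\omega(t))$ depends on $(t, \omega)$ only through the continuous evaluation $\omega \mapsto \omega(t)$, so it suffices to show that the correspondence $H_2 \ni x \twoheadrightarrow \Theta(x) \subset \bR^U$ has a Borel graph. For each $u = f_i(\langle y, \cdot\rangle_{H_2}) \in U$, condition~(iii) makes the maps $(g, x) \mapsto \langle y, \mu(g, x)\rangle_{H_2}$ and $(g, x) \mapsto \|\sigma^*(g, x) y\|_{H_1}$ jointly continuous on $G \times H_2$, and since $f_i', f_i''$ are continuous, so is the whole map $(g, x) \mapsto \mathscr{L}^{u, g}(x)$. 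Countability of $U$ then yields joint continuity of $\Phi \colon G \times H_2 \to \bR^U$, $\Phi(g, x) \triangleq (\mathscr{L}^{u, g}(x))_{u \in U}$, in the product topology. Compactness of $G$ (condition~(i)) makes $x \twoheadrightarrow \Theta(x) = \Phi(G \times \{x\})$ a compact-valued upper hemicontinuous correspondence, and Lemma~2.12 of \cite{CN22a} (already invoked in the paper for the spin-system and controlled-coefficient settings) then gives that its graph is Borel.

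For non-emptiness (Standing Assumption \ref{SA: non empty}), the strategy is to reduce to weak existence for semilinear SPDEs and to translate the resulting law into an element of $\cC(t, \omega)$. Fix $(t, \omega) \in \of 0, \infty \of$ and an arbitrary $g_0 \in G$, and consider the SPDE
\[
d Y_s = A Y_s\, ds + \mu(g_0, Y_s)\, ds + \sigma(g_0, Y_s)\, dW_s, \qquad Y_0 = \omega(t),
\]
on some filtered probability space carrying a cylindrical Brownian motion over $H_1$. Under linear growth~(iv), compactness of the semigroup $S$, and the stochastic integrability $\int_0^T (\f(s)/s^\alpha)^2\, ds < \infty$ from~(v), weak existence of a mild solution follows from a Galerkin or Lipschitz approximation combined with a tightness argument: the factorisation method of Da Prato--Kwapie\'n--Zabczyk turns the integrability hypothesis into H\"older-type regularity of the approximating stochastic convolutions, yielding tightness in $C(\bR_+; H_2)$, and continuity of $\mu(g_0, \cdot), \sigma(g_0, \cdot)$ in $x$ (a consequence of~(iii) combined with continuity of inner-product evaluations) allows passage to the limit. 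Letting $P^*$ denote the resulting law on $C(\bR_+; H_2)$, I would then define $P$ on $\Omega$ by forcing the path to coincide with $\omega$ on $[0, t]$ and with a suitable translate of $Y$ afterwards (via $\omega \otimes_t \cdot$). An infinite-dimensional It\^o formula applied to the test functions $u \in U$ identifies the compensator density of $Y^u = u(X)$ after time $t$ with $\mathscr{L}^{u, g_0}(X)$, so the density tuple lies in $\Theta(X)$ by construction and $P \in \cC(t, \omega)$.

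The main obstacle is the weak existence of mild solutions in the absence of Lipschitz regularity: contraction-based strong existence theorems are not applicable, and one must exploit the compactness of $S$ together with the stochastic integrability condition in~(v) to get compactness of the approximants and a weak limit point. The author has presumably either invoked or developed such a weak existence theorem in earlier work (e.g., \cite{criens20, criens21, criens22}), and the argument here will most likely just quote it. A secondary technical point is the careful identification of $P^*$ as a solution of the cylindrical martingale problem associated with $U$; here the specific choice of test functions as polynomials of linear functionals evaluated at elements of $D(A^*)$ is essential, since it makes It\^o's formula directly applicable in the mild sense and yields the precise compensator identity needed to place the density tuple inside $\Theta(X)$.
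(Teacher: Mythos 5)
Your proposal is correct and follows essentially the same route as the paper: the measurable graph condition is obtained from compactness of \(G\), joint continuity of \((g,x)\mapsto(\mathscr{L}^{u,g}(x))_{u\in U}\), and \cite[Lemma 2.12]{CN22a}, while non-emptiness is reduced to weak existence of mild solutions under the linear growth and compact-semigroup/factorisation hypotheses (the paper simply quotes \cite[Theorem 2.5]{criens22}) followed by the time-shift and path-concatenation step, which the paper carries out via Lemmata \ref{lem: p^t} and \ref{lem: iwie Markov}. No gaps.
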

\begin{proof}
	Thanks to (i) and (iii) of Condition \ref{cond: main SPDE}, Standing Assumption \ref{SA: meas gr} is implied by \cite[Lemma 2.12]{CN22a}.
	Moreover, thanks to (iii) -- (v) from Condition \ref{cond: main SPDE}, Standing Assumption~\ref{SA: non empty} follows from \cite[Theorem 2.5]{criens22} and Lemmata \ref{lem: p^t} and \ref{lem: iwie Markov} below.
\end{proof}

The following proposition is proved in Section \ref{sec: pf NSPDE}.
\begin{proposition} \label{prop: conds hold for SPDE}
Condition \ref{cond: main SPDE} implies Condition \ref{cond: sammel} with \(L \equiv \|X\|_{H_2}\).
\end{proposition}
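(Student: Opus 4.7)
The plan is to verify parts (i)–(v) of Condition~\ref{cond: sammel} one by one, using the explicit polynomial form of the test functions \(u = f_i(\langle y, \cdot\rangle_{H_2})\). Parts (i)–(iv) reduce to largely structural computations; part (v), the tightness of \(\bigcup_{x \in K}\cR(x)\), is the main analytic obstacle and will be handled via the mild solution representation together with the Da~Prato–Kwapień–Zabczyk factorization and the compactness of \((S_t)_{t > 0}\).

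For (i), continuity of each \(u\) and of the norm \(\|\cdot\|_{H_2}\) on \(H_2\) gives continuity of \(\omega \mapsto Y^u(\omega)\) and \(\omega \mapsto L(\omega)\) from \(\Omega = C(\bR_+; H_2)\) into \(C(\bR_+; \bK)\) and \(C(\bR_+; \bR)\) respectively, and \(\|X_{\cdot \wedge \rho_M}\|_{H_2} \leq M\) directly implies \(|Y^u_{\cdot \wedge \rho_M}| \leq (\|y\|_{H_2} M)^i\). For (ii), at fixed \(x\) the quantities \(\langle y, \mu(g, x)\rangle_{H_2}\) and \(\|\sigma^*(g, x) y\|_{H_1}^2 = \langle y, (\sigma\sigma^*)(g, x) y\rangle_{H_2}\) are affine in \((\mu(g, x), (\sigma\sigma^*)(g, x))\), so each \(\mathscr{L}^{u, g}(x)\) is affine in this pair and convexity in Condition~\ref{cond: main SPDE}(ii) transfers to \(\Theta(x)\). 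The first half of (iii) holds because on \(\gs 0, \rho_M \of\) we have \(\|\omega(t)\|_{H_2} < M\), and Condition~\ref{cond: main SPDE}(iv) bounds each coordinate \(\mathscr{L}^{u, g}(\omega(t))\) uniformly in \(g \in G\) by a constant \(K^{u, M}\). For (iv), the joint continuity in Condition~\ref{cond: main SPDE}(iii) makes \((g, t) \mapsto (\mathscr{L}^{u, g}(\omega(t)))_{u \in U}\) continuous into \(\bR^U\); compactness of \(G\) (Condition~\ref{cond: main SPDE}(i)) then yields compactness and upper hemicontinuity of \(t \mapsto \Theta(\omega(t))\), and the closed convex hull of the compact set \(\Theta(\omega([t, t+1]))\) is compact in the Fréchet space \(\bR^U\) by a standard Mazur-type argument.

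It remains to establish the moment estimate in the second half of (iii) and the tightness asserted in (v). By Lemma~\ref{lem: SPDE representation}, every \(P \in \cR(x)\) is the law of a mild solution
\[
Y_t = S_t x + \int_0^t S_{t-s}\, \mu(\g(s, Y), Y_s)\, ds + \int_0^t S_{t-s}\, \sigma(\g(s, Y), Y_s)\, dW_s
\]
for some predictable selector \(\g \colon \of 0, \infty\of \to G\). The linear growth from Condition~\ref{cond: main SPDE}(iv) combined with the stochastic-convolution estimate based on \(\int_0^T (\f(s)/s^\alpha)^2\, ds < \infty\) (Condition~\ref{cond: main SPDE}(v)) yields, via factorization and Grönwall, a uniform bound
\[
\sup_{x \in K}\, \sup_{P \in \cR(x)} E^P\Big[\sup_{s \leq T}\|X_s\|_{H_2}^q\Big] < \infty
\]
for every bounded \(K \subset H_2\) and every \(q \geq 2\), which settles the second half of (iii). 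For (v), the compactness of \(S_t\) for \(t > 0\) makes the drift convolution a compact operation on the bounded set of integrands provided by the moment estimate, while the factorization representation of the stochastic convolution, combined with the same moment bounds, provides a Hölder-type modulus suitable for the Aldous/Kolmogorov tightness criterion in \(C(\bR_+; H_2)\). Since all constants depend only on \(\C, \f, \alpha\) and \(\|x\|_{H_2}\), the estimates are insensitive to the choice of selector \(\g\) and uniform over \(x \in K\), yielding the desired relative compactness. The hard part is exactly this last step: ensuring that the factorization-based moduli of continuity are genuinely uniform over the predictable selectors \(\g\) arising across all \(P \in \bigcup_{x \in K} \cR(x)\).
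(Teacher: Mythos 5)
Your treatment of parts (i)--(iv) and of the moment bound in (iii) matches the paper's: continuity of the test maps, affinity of \(\mathscr{L}^{u,g}(x)\) in \((\mu(g,x),(\sigma\sigma^*)(g,x))\), the linear growth bound on \(\gs 0,\rho_M\of\), compactness of the closed convex hull in the Fr\'echet space \(\bR^U\), and the mild-solution representation plus factorization plus Gr\"onwall for the \(L^p\)-estimate are exactly the steps the paper carries out. The worry you raise at the end about uniformity over the selectors \(\g\) is in fact a non-issue: the selector enters only through the \(L^p([0,T];H_2)\)-norms of the integrands \(\mu(\g(s,X),X_s)\) and \(\sigma(\g(s,X),X_s)\), and these are controlled uniformly by the linear growth assumption together with the moment bound, independently of which \(\g\) realizes a given \(P\).

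The genuine gap is in your argument for part (v), specifically in how you handle the stochastic convolution. You propose to extract from the factorization representation ``a H\"older-type modulus suitable for the Aldous/Kolmogorov tightness criterion in \(C(\bR_+;H_2)\).'' In an infinite-dimensional Hilbert space this is not enough: relative compactness in \(C([0,T];H_2)\) requires, in addition to an equicontinuity estimate, that the time-\(t\) marginals concentrate on \emph{compact} subsets of \(H_2\), and bounded sets in \(H_2\) are not relatively compact. A Kolmogorov-type increment bound delivers only equicontinuity and boundedness. The correct mechanism --- and the one the paper uses, following Gatarek--Goldys --- is that the factorization operator \(R_\alpha\colon L^p([0,T];H_2)\to C([0,T];H_2)\) is itself a \emph{compact} operator whenever the semigroup \(S\) is compact (the same fact you already invoke for the drift convolution via \(R_1\)). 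Writing the stochastic convolution as \(\tfrac{\sin(\pi\alpha)}{\pi}R_\alpha(Y)\) with \(Y\) lying in an \(L^p\)-ball with high probability (by the moment bounds and Chebyshev), one concludes that the paths lie with probability at least \(1-\varepsilon\) in a fixed relatively compact subset \(G_R\subset C([0,T];H_2)\); one also needs the separate observation that \(\{t\mapsto S_t x_0 : x_0\in K\}\) is relatively compact (compactness of \(S_t\) plus Arzel\`a--Ascoli), which your sketch omits. With these two corrections your argument closes; without them, the appeal to a modulus-of-continuity criterion does not yield tightness on \(C(\bR_+;H_2)\).
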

Thanks to Proposition \ref{prop: conds hold for SPDE}, the Theorems \ref{thm: USC Feller property} and \ref{theo: strong Markov selection} yield the following result.
\begin{corollary} \label{coro:SMSP NSPDE}
	Suppose that Condition~\ref{cond: main SPDE} holds.
	Then, \((T_t)_{t \geq 0}\) is a sublinear Markovian semigroup on \(\usc_b(H_2; \bR)\) and, for every \(\psi \in \usc_b(H_2; \bR)\) and every \(t > 0\), there exists a strong Markov family \(\{P_{(s, x)} \colon  (s, x) \in \bR_+ \times H_2\}\) such that, for every \((s, x) \in \bR_+ \times H_2\), \(P_{(s, x)} \in \cK (s, x)\) and 
		\[
	E^{P_{ (s, x) }} \big[ \phi (X_t) \big] = \sup_{P \in \cK (s, x)} E^P \big[ \phi (X_t) \big].
	\]
	In particular, \(T_t (\psi) (x) = E^{P_{(0, x)}}[\psi (X_t)]\) for every \(x \in H_2\).
\end{corollary}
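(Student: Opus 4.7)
The plan is to derive Corollary \ref{coro:SMSP NSPDE} as a direct consequence of the two general results of Sections \ref{sec: 3.3} and \ref{sec: 3.4}, with Proposition \ref{prop: conds hold for SPDE} serving as the bridge between the NSPDE hypothesis Condition \ref{cond: main SPDE} and the abstract Condition \ref{cond: sammel}. Concretely, once Condition \ref{cond: sammel} is known to hold (with the choice $L \equiv \|X\|_{H_2}$), both Theorem \ref{thm: USC Feller property} and Theorem \ref{theo: strong Markov selection} become directly applicable to the NSPDE setup, and the corollary is an immediate translation.

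First I would note that the NSPDE framework sits inside the general Markovian framework of Section \ref{sec: 2}: Standing Assumption \ref{SA: markov} holds by the very definition $\Theta(t, \omega) \triangleq \Theta(\omega(t))$ in Section \ref{sec: NSPDE}, and the remaining two Standing Assumptions \ref{SA: meas gr} and \ref{SA: non empty} have just been verified in the lemma preceding Proposition \ref{prop: conds hold for SPDE}. Thus the nonlinear semigroup $(T_t)_{t \geq 0}$ defined in \eqref{eq: semigroup} is legitimately available, and Proposition \ref{prop: conds hold for SPDE} supplies Condition \ref{cond: sammel}. Applying Theorem \ref{thm: USC Feller property} then yields the first conclusion, namely that $(T_t)_{t \geq 0}$ preserves $\usc_b(H_2; \bR)$.

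For the second conclusion, I would invoke Theorem \ref{theo: strong Markov selection} for each fixed $\psi \in \usc_b(H_2; \bR)$ and each $t > 0$. This produces a strong Markov family $\{P_{(s, x)} \colon (s, x) \in \bR_+ \times H_2\}$ with $P_{(s, x)} \in \cK(s, x)$ and the optimality
\[
E^{P_{(s, x)}}\!\big[ \phi(X_t) \big] = \sup_{P \in \cK(s, x)} E^P\!\big[ \phi(X_t) \big],
\]
which is exactly the statement of the corollary. The trailing identity $T_t(\psi)(x) = E^{P_{(0, x)}}[\psi(X_t)]$ is then the specialization at $s = 0$, using the definition of $T_t$ together with the observation that $\cK(0, x) = \cR(x)$, which one reads off directly from the two defining conditions.

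Since the analytical substance (especially parts (iv) and (v) of Condition \ref{cond: sammel}, whose verification relies on compactness of the semigroup $S$ and the factorization estimate in Condition \ref{cond: main SPDE}(v) to secure tightness of mild solutions in the infinite-dimensional state space $H_2$) is already absorbed into Proposition \ref{prop: conds hold for SPDE}, the corollary itself presents no genuine obstacle: its proof amounts to quoting Proposition \ref{prop: conds hold for SPDE} followed by Theorems \ref{thm: USC Feller property} and \ref{theo: strong Markov selection} in order. The only point requiring a line of justification is the identification $\cK(0, x) = \cR(x)$, which is routine from the definitions.
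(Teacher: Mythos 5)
Your proposal is correct and matches the paper's argument exactly: the corollary is stated as an immediate consequence of Proposition \ref{prop: conds hold for SPDE} (which supplies Condition \ref{cond: sammel} with \(L \equiv \|X\|_{H_2}\)) combined with Theorems \ref{thm: USC Feller property} and \ref{theo: strong Markov selection}. Your additional remark identifying \(\cK(0,x) = \cR(x)\) from the definitions is a correct, routine observation that the paper leaves implicit.
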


Finally, we establish a smoothing effect for stochastic nonlinear Cauchy problems with drift uncertainty, which are infinite-dimensional processes of the form
\[
d Y_t = A Y_t dt + \mu (Y_t) dt + d W_t, 
\]
with drift uncertainty. 

\begin{definition}
	We say that \((T_t)_{t \geq 0}\) has the \emph{strong \(\usc_b\)--Feller property} if \[T_t (\usc_b (F; \bR)) \subset C_b (F; \bR)\] for every \(t > 0\). 
\end{definition}

To the best of our knowledge, the strong \(\usc_b\)--Feller property of sublinear Markovian semigroups was discovered in \cite{CN22b} for nonlinear one-dimensional elliptic diffusions. In~\cite{CN22c} it was established for nonlinear multidimensional diffusions with fixed (strongly) elliptic diffusion coefficients. Adapting some ideas from \cite{CN22b,CN22c,stroock2007multidimensional}, we establish the seemingly first result for infinite-dimensional nonlinear stochastic processes.

\begin{condition} \label{cond: strong USC Feller NSPDE}
		\quad 
	\begin{enumerate} [leftmargin=1cm]
		\item[\textup{(i)}]
		\(G\) is a compact metrizable space and \(H_1 \equiv H_2 \equiv H\).
		\item[\textup{(ii)}] 
		For every \(x \in H\), the set  
		\(
		\{ \mu (g, x) \colon g \in G\}
		\)
		is convex.
		\item[\textup{(iii)}]
		For every \(y \in D\), \(\langle y, \mu\rangle_H\) is continuous and \(\sigma\) equals constantly the identity operator on \(H\), i.e.,  \(\sigma (g, x) \equiv \on{id}\) for all \((g, x) \in G \times H\).
		\item[\textup{(iv)}]
		There exists a constant \(\C> 0\) such that
		\[
		\|\mu (g, x)\|_{H} \leq \C( 1 + \|x\|_H) 
		\]
		for all \((g, x) \in G \times H\).
		\item[\textup{(v)}]
		There exists an \(\alpha \in (0, 1/2)\) such that
		\[
		\int_0^T \frac{\|S_{s}\|^2_{L_2(H, H)} ds}{s^{2\alpha}} < \infty, \quad \forall \hspace{0.025cm}T > 0.
		\]
	\end{enumerate}
\end{condition}

\begin{remark}[\cite{criens20}]
	Part (v) of Condition \ref{cond: strong USC Feller NSPDE} implies that \(S\) is a compact semigroup. Moreover, the condition
		\[
	\int_0^T \frac{\|S_{s}\|^2_{L_2(H, H)} ds}{s^{2\alpha}} < \infty
	\]
	holds for \emph{all} \(T > 0\) once it holds for \emph{some} \(T > 0\).
\end{remark}

The proof for the next theorem is given in Section \ref{sec: pf strong USC}.
\begin{theorem} \label{theo: strong USC NSPDE}
	If Condition \ref{cond: strong USC Feller NSPDE} holds, then \((T_t)_{t \geq 0}\) has the strong \(\usc_b\)--Feller property. 
\end{theorem}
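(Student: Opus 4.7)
The plan is to adapt the strong Feller selection approach of \cite{CN22b,CN22c} to the infinite-dimensional Cauchy problem setting. The constant identity diffusion coefficient is essential: on a common stochastic basis carrying a cylindrical \(H\)-Wiener process \(W\), every \(P \in \cR(x)\) is (by Lemma~\ref{lem: SPDE representation}) the law of a mild solution \(Y^{\g, x}\) of \(dY = AY\, ds + \mu(\g(s, Y), Y_s)\, ds + dW\) with \(Y_0 = x\), for some predictable \(G\)-valued selection \(\g\). By Girsanov's theorem---which applies via Novikov's criterion thanks to the linear growth in Condition~\ref{cond: strong USC Feller NSPDE}(iv) and standard moment bounds for mild OU solutions---the law of \(Y^{\g, x}\) on \(\cF_s\) equals \(D^{\g, x}_s \cdot \mathbb{Q}_x|_{\cF_s}\), where \(\mathbb{Q}_x\) is the law of the Ornstein--Uhlenbeck (OU) process \(Z^x_\cdot = S_\cdot x + \int_0^\cdot S_{\cdot - r}\, dW_r\) and \(D^{\g, x}_s\) is an explicit exponential density uniformly bounded in every \(L^p(\mathbb{Q}_x)\) on compact subsets of \(H\). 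Condition~\ref{cond: strong USC Feller NSPDE}(v) ensures, via the Da Prato--Zabczyk formula, that the OU semigroup \(P^0_t \phi(x) \triangleq E[\phi(Z^x_t)]\) is strong Feller for every \(t > 0\).

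Using the Markov property (Proposition~\ref{prop: markov property}) and the \(\usc_b\)-Feller property of Corollary~\ref{coro:SMSP NSPDE}, I write \(T_t = T_{t/2} \circ T_{t/2}\). Then \(\phi \triangleq T_{t/2}(\psi) \in \usc_b(H; \bR)\), so the proof reduces to showing that \(x \mapsto T_s(\phi)(x) = \sup_{\g} E[\phi(Y^{\g, x}_s)]\) is continuous for every \(\phi \in \usc_b(H; \bR)\) and every \(s > 0\). Upper semicontinuity of \(T_s(\phi)\) is contained in the \(\usc_b\)-Feller property of Corollary~\ref{coro:SMSP NSPDE}; only lower semicontinuity remains.

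To this end, given \(\epsilon > 0\), I select an \(\epsilon\)-optimiser \(\g_\epsilon\) at \(x\) and use it as a competitor for any sequence \(x_n \to x\). The Girsanov representation gives \(E[\phi(Y^{\g_\epsilon, x_n}_s)] = E[\phi(Z^{x_n}_s)\, D^{\g_\epsilon, x_n}_s]\). The crucial smoothing step exploits the OU marginal at time \(s\): the random variables \(\phi(Z^{x_n}_s)\, D^{\g_\epsilon, x_n}_s\) are bounded in \(L^2\) uniformly in \(n\), and pathwise stability of mild OU solutions under small initial-value perturbations, combined with continuity of \(\langle y, \mu\rangle_H\) (Condition~\ref{cond: strong USC Feller NSPDE}(iii)) and compactness of \(G\) (Condition~\ref{cond: strong USC Feller NSPDE}(i)), delivers \(L^2(\mathbb{Q}_x)\)-convergence of the transported densities. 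Pairing this with the strong Feller property of the OU semigroup applied to \(\phi\) via the Cameron--Martin formula at time \(s > 0\) yields \(\liminf_n T_s(\phi)(x_n) \geq T_s(\phi)(x) - \epsilon\); sending \(\epsilon \downarrow 0\) completes the argument.

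The main technical obstacle is the reconciliation of the upper semicontinuity of \(\phi\) with the Gaussian averaging at time \(s\): the strong Feller property of OU must be harnessed in a form strong enough to upgrade the \(\usc\)-regularity of \(\phi\) (inherited from the \(\usc_b\)-Feller property) to full continuity, while simultaneously controlling the path-dependent Girsanov density under initial-value perturbation. Along the lines of \cite{CN22c}, this boils down to a careful \(L^2(\mathbb{Q}_x)\)-estimate on \(D^{\g_\epsilon, x_n}_s - D^{\g_\epsilon, x}_s\) together with a Cameron--Martin change of variables in the Gaussian marginal; the linear growth of \(\mu\), the compactness of \(G\), and the continuity of \(\langle y, \mu\rangle_H\) are the precise ingredients that make this estimate work.
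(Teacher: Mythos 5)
Your overall strategy (Girsanov comparison with the Ornstein--Uhlenbeck process, reduction via the Markov property, and use of the OU strong Feller property from \cite[Theorem 9.32]{DaPrato}) points in the right direction, but the central step of your argument has a genuine gap. You propose to fix an \(\epsilon\)-optimal selection \(\g_\epsilon\) at the point \(x\) and transport it to nearby starting points \(x_n\), claiming that ``pathwise stability of mild OU solutions under small initial-value perturbations, combined with continuity of \(\langle y, \mu\rangle_H\) and compactness of \(G\), delivers \(L^2(\mathbb{Q}_x)\)-convergence of the transported densities'' \(D^{\g_\epsilon, x_n}_s \to D^{\g_\epsilon, x}_s\). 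This does not hold: \(\g_\epsilon\) is obtained from Proposition \ref{prop: structure of R} via a measurable selection and is therefore only a \emph{predictable (measurable)} functional of the path, with no continuity in \(\omega\) whatsoever. Writing \(Z^{x_n} = Z^{x} + S_\cdot(x_n - x)\), the drift entering the exponential density is \(\mu(\g_\epsilon(r, Z^{x_n}), Z^{x_n}_r)\), and composing the merely measurable map \(\omega \mapsto \g_\epsilon(r,\omega)\) with the shifted path gives no convergence as \(x_n \to x\); neither the compactness of \(G\) nor the continuity of \((g,x)\mapsto\langle y,\mu(g,x)\rangle_H\) can repair this. The same problem contaminates your second step: with the path-dependent weight \(D^{\g_\epsilon,x}_s\) correlated with \(Z^x_s\), the marginal strong Feller property of the OU semigroup (or a Cameron--Martin shift of the time-\(s\) marginal) does not yield \(E[\phi(Z^{x_n}_s)D^{\g_\epsilon,x}_s] \to E[\phi(Z^{x}_s)D^{\g_\epsilon,x}_s]\) for \(\phi\) that is only bounded upper semicontinuous. (A smaller, fillable gap: you also use without proof that the Girsanov-constructed law of \(Y^{\g_\epsilon, x_n}\) belongs to \(\cR(x_n)\).)

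The paper's proof is built precisely to avoid any comparison of path-dependent densities across different starting points. It invokes the strong Markov selection of Corollary \ref{coro:SMSP NSPDE} to obtain one measurable family \(\{P_{(s,x)}\}\) attaining the supremum, so that \(T_T(\psi)(x) = E^{P_{(0,x)}}[\psi(X_T)]\); it then splits at a \emph{small} time \(\alpha\) (not at \(t/2\)) and uses the Markov property of the selected family to write \(E^{P_{x^n}}[\psi(X_T)] = E^{P_{x^n}}[\Psi_\alpha(X_\alpha)]\) with the fixed bounded measurable function \(\Psi_\alpha(x) = E^{P_{(\alpha,x)}}[\psi(X_T)]\), independent of \(n\). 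Over the short horizon \([0,\alpha]\) the Girsanov density satisfies the uniform estimate \(E^{P_{x^n}}[|1 - Z^{P_{x^n}}_{\alpha\wedge\rho_M}|] \le \varepsilon\) (after localizing with \(\rho_M\) and controlling the exit probabilities under both \(P_{x^n}\) and the OU laws), so each \(P_{x^n}\) can be replaced by the drift-free OU measure \(Q_{x^n}\) up to an \(O(\varepsilon)\) error, and the OU strong Feller property is then applied to \(\Psi_\alpha\) alone. No convergence of the selections or of the associated densities in the starting point is ever needed. To make your argument work you would have to either regularize the \(\epsilon\)-optimal control in the path variable (which the framework does not provide) or restructure it along the lines of this selection-plus-short-time-Girsanov scheme.
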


Under suitable additional conditions (see \cite[Hypothesis 9.1]{DaPrato}), Theorem \ref{theo: strong USC NSPDE} can be extended to NSPDEs with \emph{certain} diffusion coefficient, i.e., to the case \(\sigma (g, x) \equiv \sigma (x)\).

\begin{example}[\cite{gatarekgoldys}]
	Let \(\mathcal{O}\subset \mathbb{R}^d\) be a bounded domain with smooth boundary. Part \textup{(v)} of Condition \ref{cond: strong USC Feller NSPDE} holds when \(H = L^2 (\mathcal{O})\) and \(A\) is a strongly elliptic operator of order \(2m > d\), see \cite[Remark 3, Example 3]{gatarekgoldys} for some details. In particular, this is the case when \(d = 1\) and \(A\) is the Laplacian, i.e., Condition \ref{cond: strong USC Feller NSPDE} holds for one-dimensional stochastic heat equations with drift uncertainty.
\end{example}



\section{Proof of the Dynamic Programming Principle: Theorem \ref{theo: DPP}} \label{sec: pf DPP}
Recall that the Standing Assumptions \ref{SA: meas gr} and \ref{SA: non empty} are in force.
The proof of Theorem \ref{theo: DPP} is based on an application of the general \cite[Theorem 2.1]{ElKa15}, which provides three abstract conditions on the set \(\cA\) implying the DPP. In the following we verify these conditions. For reader's convenience, let us restate them. 
\begin{enumerate} [leftmargin=1cm]
\item[\textup{(i)}]
\emph{Measurable graph condition:} The set 
\(\{ (t, \omega, P) \in \of 0, \infty\of \hspace{0.05cm} \times\hspace{0.05cm} \mathfrak{P}(\Omega) \colon P \in \cA(t, \omega) \}
\)
is analytic.
\item[\textup{(ii)}]
\emph{Stability under conditioning:} For any \(t \in \mathbb{R}_+\), any stopping time \(\tau\) with \(t \leq \tau < \infty\), and any \(P \in \cA(t, \alpha)\) there exists a family \(\{P (\, \cdot\, | \mathcal{F}_\tau) (\omega) \colon\) \(\omega \in \Omega \}\) of regular \(P\)-conditional probabilities given \(\mathcal{F}_\tau\) such that \(P\)-a.s. \(P (\, \cdot\, | \mathcal{F}_\tau) \in \cA(\tau, X)\).
\item[\textup{(iii)}]
\emph{Stability under pasting:} For any \(t \in \mathbb{R}_+\), any stopping time \(\tau\) with \(t \leq \tau < \infty\), any \(P \in \cA(t, \alpha)\) and any \(\mathcal{F}_\tau\)-measurable map \(\Omega \ni \omega \mapsto Q_\omega \in \mathfrak{P}(\Omega)\) the following implication holds:
\[
P\text{-a.s. } Q \in \cA (\tau, X)\quad \Longrightarrow \quad P \otimes_\tau Q \in \cA(t, \alpha).
\]
Here, the pasting measure \(P \otimes_\tau Q\) was defined in \eqref{eq: pasting measure}.
\end{enumerate}
In the following three sections we check these properties. In the fourth (and last) section, we finalize the proof of Theorem \ref{theo: DPP}. Hereby, although technically different, we follow a strategy from \cite{neufeld2017nonlinear} and we also use some technical ideas from \cite{CN22a} about the measurability of the semimartingale property in time.

\subsection{Measurable graph condition}
The proof of the measurable graph condition is split into several parts.

\begin{lemma} \label{lem: measurable in zero}
For every \(\mathbb{K}\)-valued \cadlag \(\mathbf{F}\)-adapted process \(Y^*\), the set \[\fPas (Y^*) \triangleq \big \{P \in \mathfrak{P}(\Omega) \colon Y^* \in \fSac (P)\big \}\] is Borel. Moreover, there exists a Borel map
\[
\of 0, \infty\of \hspace{0.05cm} \times \hspace{0.05cm} \fPas (Y^*) \ni (t, \omega, P) \mapsto a^P_t (Y^*) (\omega) \in \mathbb{K}
\]
such that, for every \(P \in \fPas (Y^*)\), \(a^P\) is predictable and \((\llambda \otimes P)\)-a.e. \(a^P (Y^*) = d \A^P (Y^*)/d \llambda\).
\end{lemma}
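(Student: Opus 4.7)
\emph{Plan.} The strategy is to invoke the measurable selection machinery for semimartingale characteristics developed by Neufeld and Nutz \cite{neufeld2014measurability}, applied here to the $\mathbf{F}$-adapted process $Y^*$ (rather than to the canonical process; the same proofs carry through). Combined with a Lebesgue differentiation step at the end, this yields both assertions of the lemma.

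First, I would establish that $\fPas(Y^*)$ is Borel. Decompose
\[
\fPas(Y^*) = \big\{P \in \mathfrak{P}(\Omega) : Y^* \in \fSss(P)\big\} \cap \big\{P : \A^P(Y^*) \ll \llambda \ P\text{-a.s.}\big\}.
\]
The Neufeld--Nutz framework provides that the first set is Borel and that on it the compensator $(P, t, \omega) \mapsto \A^P_t(Y^*)(\omega)$ admits a jointly Borel version which is predictable in $(t, \omega)$ for each fixed $P$. The absolute-continuity condition then cuts out a further Borel subset: it can be rewritten as $P(\A^P(Y^*) \in \mathcal{AC}) = 1$, where $\mathcal{AC} \subset D(\bR_+; \bK)$ is the (Borel) set of absolutely continuous paths, and $P \mapsto P(\A^P(Y^*) \in \mathcal{AC})$ is Borel by a Fubini-type argument using the Borel kernel just constructed.

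Second, on $\fPas(Y^*)$ the density is recovered by Lebesgue differentiation. Since $\A^P(Y^*)$ is predictable and absolutely continuous, its paths are continuous; hence the finite differences
\[
\Delta^n_t(P, \omega) \triangleq n \big( \A^P_t(Y^*)(\omega) - \A^P_{(t - 1/n)^+}(Y^*)(\omega) \big)
\]
are continuous and adapted (thus predictable) in $(t, \omega)$ for each fixed $P$, and jointly Borel in $(P, t, \omega)$. Setting $a^P_t(Y^*)(\omega) \triangleq \limsup_{n \to \infty} \Delta^n_t(P, \omega)$ therefore produces a Borel and predictable function. By the classical Lebesgue differentiation theorem applied $\omega$-wise to the absolutely continuous finite-variation function $t \mapsto \A^P_t(Y^*)(\omega)$, this $\limsup$ coincides with $d\A^P(Y^*)/d\llambda$ for $\llambda$-a.e.\,$t$, hence $(\llambda \otimes P)$-a.e., as required.

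The main obstacle is the joint Borel measurability of the compensator together with predictability in the first step; this is the central content of \cite{neufeld2014measurability}, which I would cite directly. Were one to reprove it, the route would be via dyadic discretization: the processes $\A^{P, n}_t \triangleq \sum_{k :\, k 2^{-n} \leq t} E^P[Y^*_{(k+1) 2^{-n}} - Y^*_{k 2^{-n}} \mid \mathcal{F}_{k 2^{-n}}]$ are jointly Borel in $(P, t, \omega)$ by measurable disintegration (cf.\ \cite{stroock2007multidimensional}), and on the Borel subset where they converge to a predictable \cadlag process of finite variation (uniformly on compacts, in $P$-probability), the limit furnishes the Doob--Meyer compensator.
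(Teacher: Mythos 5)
Your proposal follows essentially the same route as the paper: both rest on the Neufeld--Nutz measurability results for semimartingale characteristics, a Borel formulation of the absolute-continuity condition (the paper cites the proof of Lemma 7.1 in that reference), and a differentiation step for the density (the paper simply cites their Theorem 2.6, which is the Lebesgue-differentiation argument you sketch). The only place the paper does visibly more work is the passage from ``semimartingale'' to ``special semimartingale with identified compensator'': Neufeld--Nutz give Borel versions of the characteristics $(B^P, C, \nu^P)$, and one still needs \cite[Proposition II.2.29]{JS} both to express $\{P : Y^* \in \fSss(P)\}$ as the Borel set where $(|x|^2 \wedge |x|) * \nu^P_t < \infty$ and to write $\A^P(Y^*) = B^P + (x - h(x)) * \nu^P$; your write-up treats this bridge as already contained in the cited framework, but it is a standard fact and does not affect the correctness of the argument.
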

\begin{proof}
	For \(P \in \mathfrak{P}(\Omega)\), denote the set of \(\mathbb{K}\)-valued \(P\)-\(\mathbf{F}_+\)-semimartingales by~\(\mathcal{S}(P)\). 
	The set \(\fPs (Y^*) \triangleq \{P \in \mathfrak{P}(\Omega) \colon Y^* \in \mathcal{S}(P)\}\) is Borel by \cite[Theorem 2.5]{neufeld2014measurability}. 
	Furthermore, by the very same theorem, there are Borel maps (with values in a suitable Polish space; see Lemma~\ref{lem: Levy polish} below)
	\[
	 \mathbb{R}_+ \times \Omega \times \fPs (Y^*) \ni (t, \omega, P) \mapsto (B^P_t (\omega), C_t (\omega), \nu^P (\omega))
	\]
	such that \((B^P, C, \nu^P)\) are the \(P\)-\(\mathbf{F}_+\)-semimartingale characteristics of \(Y^*\) (corresponding to a given truncation function \(h\)).
	By virtue of \cite[Proposition II.2.29]{JS}, we have 
	\begin{align*}
	\mathfrak{P}_{\on{sp} \on{sem}} (Y^*) &\triangleq \big\{P \in \mathfrak{P}(\Omega) \colon Y^* \in \fSss (P) \big\} 
	\\&= \big\{P \in \fPs (Y^*)\colon \forall_{t \in \mathbb{N}} \ P\text{-a.s. } (|x|^2 \wedge |x|) * \nu^P_t < \infty \big\},
	\end{align*}
	where, as usual, 
	\[
	(|x|^2 \wedge |x|) * \nu^P_t \triangleq \int (|x|^2 \wedge |x|)\hspace{0.05cm} \nu^P ([0, t] \times dx).
	\]
	Consequently, \(\mathfrak{P}_{\on{sp} \on{sem}} (Y^*)\) is Borel by \cite[Theorem 8.10.61]{bogachev}. Thanks again to \cite[Proposition~II.2.29]{JS}, in case \(Y^* \in \fSss (P)\), the predictable part of (locally) finite variation in the semimartingale decomposition is given by 
	\begin{align*} 
	\A^P (Y^*) = B^P + (x - h(x)) * \nu^P.
	\end{align*}
	Hence, 
	\begin{align*}
	\fPas (Y^*)=\big \{ P \in \mathfrak{P}_{\on{sp} \on{sem}} (Y^*)\colon P\text{-a.s. } B^P + (x - h(x)) * \nu^P \ll \llambda \big\}.
	\end{align*}
	We deduce from \cite[Theorem 8.4.4]{benedetto} that \(P\)-a.s. there exists a decomposition 
	\[
	B^P + (x - h(x)) * \nu^P = \int_0^\cdot \phi^P_s ds + \psi^P, 
	\]
	where \(t \mapsto \psi^P_t\) is singular w.r.t. the Lebesgue measure, \((t, \omega, P) \mapsto \phi^P_t (\omega) \in \mathbb{K}\) is Borel and \(\phi^P\) is predictable.
	Now, 
	\begin{align*}
	\fPas(Y^*)=\Big \{ P \in \mathfrak{P}_{\on{sp} \on{sem}} (Y^*)\colon \forall_{t \in \mathbb{Q}_+} \ P\text{-a.s. } B^P_t + (x - h(x)) * \nu^P_t = \int_0^t \phi^P_s ds \Big\},
	\end{align*}
	and the latter set is Borel by \cite[Theorem 8.10.61]{bogachev}.
	Finally, we can take \(a^P (Y^*) = \phi^P\) and the proof is complete.
\end{proof}

\begin{lemma} \label{lem: map prod meas}
    The map \((t, P) \mapsto P \circ \th_t^{-1} \triangleq P_t\) is Borel. 
\end{lemma}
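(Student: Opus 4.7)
The plan is to reduce the statement, via the standard characterisation of Borel maps into \(\mathfrak{P}(\Omega)\) (equipped with the weak topology), to checking that for every \(f \in C_b (\Omega;\bR)\) the scalar map
\[
(t, P) \mapsto \int f \, dP_t = \int f (\theta_t \omega) \, P(d\omega)
\]
is Borel in \((t, P)\). This reduction is valid because the Borel \(\sigma\)-field on \(\mathfrak{P}(\Omega)\) is generated by the evaluations \(\mu \mapsto \int f \, d\mu\) with \(f \in C_b(\Omega;\bR)\).

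First I would show that the shift \((t, \omega) \mapsto \theta_t \omega\) is jointly Borel as a map \(\bR_+ \times \Omega \to \Omega\). Since \(\mathcal{F} = \sigma(X_s, s \geq 0)\), it is enough to check that for every fixed \(s \in \bR_+\), the map \((t, \omega) \mapsto X_s (\theta_t \omega) = \omega(t + s)\) is jointly Borel; this in turn is the classical joint Borel measurability of the evaluation map on a \cadlag\ (or continuous) path space, obtained in the \cadlag\ case by writing \(\omega (r) = \lim_n \omega (\lceil nr \rceil/n)\) and observing that each approximant is piecewise a coordinate projection, hence Borel in \((r, \omega)\). Composing with any bounded continuous \(f\) then yields that \(\Phi_f (t, \omega) \triangleq f (\theta_t \omega)\) is bounded and jointly Borel.

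The remaining step is a routine monotone class argument. The collection of bounded Borel \(\Phi \colon \bR_+ \times \Omega \to \bR\) for which \((t, P) \mapsto \int \Phi (t, \omega) \, P(d\omega)\) is Borel in \((t, P)\) is a vector space, is closed under bounded pointwise convergence (by dominated convergence), and contains every product indicator \(\mathbf{1}_{A \times B}\) with \(A \in \mathcal{B}(\bR_+)\) and \(B \in \mathcal{F}\), since \((t, P) \mapsto \mathbf{1}_A (t) P(B)\) is Borel (the map \(P \mapsto P(B)\) being Borel for every fixed \(B \in \mathcal{F}\) by the very definition of the weak topology). The monotone class theorem then gives the claim for every bounded Borel \(\Phi\), in particular for \(\Phi = \Phi_f\), and this finishes the proof.

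The only mildly delicate input I expect to need is the joint Borel measurability of the evaluation map on the \cadlag\ path space; everything else is standard measure-theoretic bookkeeping, and no properties of the specific processes \(Y^u\) or of the correspondence \(\Theta\) enter.
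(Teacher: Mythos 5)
Your proof is correct and follows essentially the same route as the paper: the key input in both cases is the joint Borel measurability of \((t, \omega) \mapsto \theta_t (\omega)\), after which the paper simply cites \cite[Theorem 8.10.61]{bogachev} for the measurability of the induced pushforward map, which is exactly the general statement your reduction-plus-monotone-class argument proves from scratch.
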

\begin{proof}
As the map \((t, \omega) \mapsto \theta_t (\omega)\) is Borel, the claim follows from \cite[Theorem 8.10.61]{bogachev}. 
\end{proof}
The following lemma is a version of \cite[Lemma 3.3]{CN22a} for \(F\)-valued c\`adl\`ag, instead of real-valued continuous, processes. The proof is verbatim the same and omitted here.

\begin{lemma} \label{lem: filtraation shift}
    Let \(Y = (Y_t)_{t \geq 0}\) be an \(F\)-valued \cadlag process and set \(\mathcal{F}^Y_t \triangleq \sigma (Y_s, s\leq t)\) for \(t \in \mathbb{R}_+\). Then, \(
    Y^{-1} (\mathcal{F}_{s+}) = \mathcal{F}^Y_{s+}\) for all \(s \in \mathbb{R}_+.\)
\end{lemma}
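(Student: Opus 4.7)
The inclusion $Y^{-1}(\mathcal{F}_{s+}) \subseteq \mathcal{F}^Y_{s+}$ is elementary. Starting from the standard pullback identity $Y^{-1}(\mathcal{F}_t) = \mathcal{F}^Y_t$ for every $t \in \bR_+$ (which in turn uses $\sigma(Y^{-1}(\mathcal{C})) = Y^{-1}(\sigma(\mathcal{C}))$ together with $Y_r = X_r \circ Y$), any $A \in \mathcal{F}_{s+} = \bigcap_{t > s} \mathcal{F}_t$ yields $Y^{-1}(A) \in \bigcap_{t > s} Y^{-1}(\mathcal{F}_t) = \bigcap_{t > s} \mathcal{F}^Y_t = \mathcal{F}^Y_{s+}$.

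The content lies in the reverse inclusion. The obstacle here is that, while $Y^{-1}$ commutes with intersections of individual sets, it does \emph{not} commute with intersections of $\sigma$-algebras in general. To circumvent this, I fix $A \in \mathcal{F}^Y_{s+}$ and, for each $n \in \mathbb{N}$, invoke the pullback identity at level $s + 1/n$ to select $B_n \in \mathcal{F}_{s + 1/n}$ with $Y^{-1}(B_n) = A$. I then form the set-theoretic limit inferior
\[
B \triangleq \liminf_{n \to \infty} B_n = \bigcup_{k \geq 1} \bigcap_{n \geq k} B_n.
\]
For any fixed $k_0 \in \mathbb{N}$ and any $k \geq k_0$, the intersection $\bigcap_{n \geq k} B_n$ lies in $\mathcal{F}_{s + 1/k} \subseteq \mathcal{F}_{s + 1/k_0}$, so the increasing union $\bigcup_{k \geq k_0} \bigcap_{n \geq k} B_n$ belongs to $\mathcal{F}_{s + 1/k_0}$; since this union equals $B$ and $k_0$ was arbitrary, $B \in \mathcal{F}_{s+}$. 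Preimages under a single map do commute with arbitrary Boolean operations on sets, hence
\[
Y^{-1}(B) = \liminf_{n \to \infty} Y^{-1}(B_n) = \liminf_{n \to \infty} A = A,
\]
which shows $A \in Y^{-1}(\mathcal{F}_{s+})$.

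The only delicate step is thus the $\liminf$ bookkeeping that promotes the sequence $(B_n)$ from the nested $\sigma$-algebras $\mathcal{F}_{s + 1/n}$ into a single representative lying in $\mathcal{F}_{s+}$. No right-continuity of the paths of $Y$ is actually used, which is consistent with the author's remark that the argument is identical to that of \cite[Lemma 3.3]{CN22a} for continuous, real-valued processes; the generalization to $F$-valued \cadlag processes involves no additional work since the argument takes place purely at the level of $\sigma$-algebras.
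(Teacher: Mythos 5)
Your argument is correct: the pullback identity $Y^{-1}(\mathcal{F}_t)=\mathcal{F}^Y_t$, the choice of representatives $B_n\in\mathcal{F}_{s+1/n}$ with $Y^{-1}(B_n)=A$, and the $\liminf$ construction promoting them to a single $B\in\mathcal{F}_{s+}$ with $Y^{-1}(B)=A$ is exactly the standard proof of this statement, which the paper itself omits by citing \cite[Lemma 3.3]{CN22a} as verbatim the same. Your side remark is also accurate: the c\`adl\`ag assumption only serves to make $Y$ a well-defined measurable map into $\Omega$ so that $Y^{-1}(\mathcal{F}_{s+})$ is meaningful, while the $\sigma$-algebra bookkeeping itself needs no path regularity.
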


The following lemma is a restatement of \cite[Lemma 2.9 a)]{jacod80}. 

\begin{lemma} \label{lem: jacod restatements}
	Take two filtered probability spaces \(\B^* = (\Omega^*, \mathcal{F}^*, \F^* = (\mathcal{F}^*_t)_{t \geq 0}, P^*)\) and  \(\B' = (\Omega', \mathcal{F}', \F' = (\mathcal{F}'_t)_{t \geq 0}, P')\) with right-continuous filtrations and the property that there is a map \(\phi \colon \Omega' \to \Omega^*\) such that
	\(
	\phi^{-1} (\mathcal{F}^*) \subset \mathcal{F}',P^* = P' \circ \phi^{-1}\) and \(\phi^{-1} (\mathcal{F}^*_t) = \mathcal{F}'_t\) for all \(t \in \mathbb{R}_+\).
Then, \(X^*\) is a \(d\)-dimensional semimartingale on~\(\B^*\) if and only if \(X' = X^* \circ \phi\) is a \(d\)-dimensional semimartingale on~\(\B'\). Moreover, \((B^*, C^*, \nu^*)\) are the characteristics of \(X^*\) if and only if \((B^* \circ \phi, C^* \circ \phi, \nu^* \circ \phi)\) are the characteristics of \(X' = X^* \circ \phi\).
\end{lemma}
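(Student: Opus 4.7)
The plan is to reduce everything to a single key identity and then leverage it to transfer the semimartingale structure in both directions. The identity is: for every bounded $\mathcal{F}^*$-measurable $Z$ and every $s \in \bR_+$,
\[
E^{P'}[Z \circ \phi \mid \mathcal{F}'_s] = E^{P^*}[Z \mid \mathcal{F}^*_s] \circ \phi \qquad P'\text{-a.s.}
\]
This follows from $P^* = P' \circ \phi^{-1}$ together with $\phi^{-1}(\mathcal{F}^*_s) = \mathcal{F}'_s$: for any $A = \phi^{-1}(A^*) \in \mathcal{F}'_s$ both sides integrate to $E^{P^*}[\mathbf{1}_{A^*} Z]$ against $\mathbf{1}_A$, and the candidate $E^{P^*}[Z \mid \mathcal{F}^*_s] \circ \phi$ is $\mathcal{F}'_s$-measurable by hypothesis. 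This identity passes to the right-continuous filtrations since, by a variant of Lemma~\ref{lem: filtraation shift}, one also has $\phi^{-1}(\mathcal{F}^*_{s+}) = \mathcal{F}'_{s+}$.

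For the forward direction, I would take a semimartingale decomposition $X^* = X^*_0 + M^* + V^*$ on $\B^*$ and consider the candidate $X' = X^*_0 \circ \phi + M^* \circ \phi + V^* \circ \phi$ on $\B'$. Adaptedness of each piece follows from $\phi^{-1}(\mathcal{F}^*_t) \subset \mathcal{F}'_t$; pathwise finite variation of $V^* \circ \phi$ is immediate; and predictability of $V^* \circ \phi$ holds because the predictable $\sigma$-algebra on $\Omega^* \times \bR_+$ pulls back under $\phi \times \on{id}$ into the predictable $\sigma$-algebra on $\Omega' \times \bR_+$. The martingale property of $M^* \circ \phi$ is then an immediate consequence of the key identity. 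Localization is handled by $\tau^*_n \circ \phi$, which is an $\F'$-stopping time since $\{\tau^*_n \leq t\} \in \mathcal{F}^*_t$ pulls back into $\mathcal{F}'_t$, and $(M^* \circ \phi)_{\cdot \wedge \tau^*_n \circ \phi} = M^*_{\cdot \wedge \tau^*_n} \circ \phi$ is a true martingale by the same identity.

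For the characteristics and the reverse direction, I would rely on the test-martingale characterization \cite[Theorem II.2.42]{JS}: $(B^*, C^*, \nu^*)$ are the characteristics of $X^*$ on $\B^*$ iff a specific family of functionals of $X^*$ (built from the truncation function $h$, $B^*$, $C^*$ and $\nu^*$) is a family of local $P^*$-martingales. Each such functional commutes with composition by $\phi$, producing the analogous functional for $X'$ with data $(B^* \circ \phi, C^* \circ \phi, \nu^* \circ \phi)$, so the local martingale property transfers forwards by the argument of the preceding paragraph. For the reverse implication, the delicate point is to lift a localizing sequence $(\tau'_n)$ on $\B'$ back to $\B^*$: using $\{\tau'_n \leq t\} \in \mathcal{F}'_t = \phi^{-1}(\mathcal{F}^*_t)$ and a consistent choice over rational $t$, one constructs $\F^*$-stopping times $\tau^*_n$ with $\tau'_n = \tau^*_n \circ \phi$ $P'$-almost surely. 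This projection of stopping times is the main obstacle; once achieved, the reverse test-martingale property transfers by the same identity, yielding both the semimartingale property and the claimed characteristics on $\B^*$.
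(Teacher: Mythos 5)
The paper never proves this lemma: it is quoted as a restatement of \cite[Lemma 2.9 a)]{jacod80}, so your direct argument is by necessity a different route, and in outline it is the right one. The conditional-expectation transfer identity is correct (though, since both filtrations are assumed right-continuous, your extra remark about passing to \(\mathcal{F}^*_{s+}\) is vacuous), the forward direction is routine, and the lifting of stopping times is exactly the right device; in fact it can be done identically, not merely \(P'\)-a.s.: choose \(A^*_q \in \mathcal{F}^*_q\) with \(\{\tau' \leq q\} = \phi^{-1}(A^*_q)\) for rational \(q\) and set \(\tau^* \triangleq \inf\{q \in \mathbb{Q} \colon \cdot \in A^*_q\}\); right-continuity of \(\F^*\) makes \(\tau^*\) an \(\F^*\)-stopping time and \(\tau^* \circ \phi = \tau'\) everywhere.

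The one place where your sketch claims more than it delivers is the reverse direction. Your transfer ``\(N^* \circ \phi\) a local \(P'\)-martingale \(\Rightarrow\) \(N^*\) a local \(P^*\)-martingale'', combined with the test-martingale characterization \cite[Theorem II.2.42]{JS}, does give the ``if'' half of the characteristics statement together with the semimartingale property of \(X^*\), because there the candidate triplet \((B^*, C^*, \nu^*)\) is already given on \(\Omega^*\) and the test processes on \(\B'\) are literally pullbacks of those on \(\B^*\). But for the bare implication ``\(X'\) semimartingale on \(\B'\) \(\Rightarrow\) \(X^*\) semimartingale on \(\B^*\)'' you have no triplet (nor decomposition) on \(\Omega^*\) to start from: you must first recognize the \(\B'\)-data — the characteristics of \(X'\), or a decomposition \(X' = X'_0 + M' + V'\) — as pullbacks of \(\F^*\)-adapted (respectively predictable) objects. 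This is where the exact equality \(\mathcal{F}'_t = \phi^{-1}(\mathcal{F}^*_t)\) enters through a Doob--Dynkin type argument (every \(\mathcal{F}'_t\)-measurable variable is \(g \circ \phi\) with \(g\) \(\mathcal{F}^*_t\)-measurable, extended to c\`adl\`ag processes via rational times and right limits, and analogously for the predictable \(\sigma\)-field), and it is not covered by the stopping-time lifting alone. With that step added, your plan is complete and reproduces the substance of Jacod's proof.
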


\begin{lemma} \label{lem: MGC Borel}
   The set \begin{align*}
   \big \{ (t, \omega, P) \in \of 0, \infty\of\hspace{0.05cm} \times \hspace{0.05cm} \mathfrak{P}&(\Omega) \colon \forall_{u \in U} \hspace{0.15cm} Y^u \in \fSac (t, P), \\ & (\llambda \otimes P)\text{-a.e. } (d\A^P (Y^u_{\cdot + t}) / d \llambda)_{u \in U} \in \Theta (\, \cdot + t, \omega \otimes_t X) \big\}
   \end{align*} 
   is Borel.
\end{lemma}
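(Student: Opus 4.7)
The plan is to decompose the target set as $B_{\mathrm{ac}} \cap B_{\mathrm{inc}}$, where $B_{\mathrm{ac}}$ handles the condition ``$Y^u \in \fSac(t, P)$ for all $u \in U$'' and $B_{\mathrm{inc}}$ handles the $(\llambda \otimes P)$-a.e.\ inclusion of the compensator-density tuple. The unifying move is to translate every ``after time $t$'' condition on $(P, Y^u_{\cdot + t})$ into a condition about $(P_t, Y^u)$, where $P_t \triangleq P \circ \theta_t^{-1}$. By Lemma \ref{lem: filtraation shift}, $\theta_t^{-1}(\mathcal{F}_+)$ is the right-continuous natural filtration of $X_{\cdot + t}$, so Lemma \ref{lem: jacod restatements} applies with $\phi = \theta_t$: this yields both the equivalence $Y^u \in \fSac(t, P) \iff Y^u \in \fSac(P_t)$ and the characteristics-transfer identity $\A^P(Y^u_{\cdot + t}) = \A^{P_t}(Y^u) \circ \theta_t$.

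For $B_{\mathrm{ac}}$, the set $\{P' \in \mathfrak{P}(\Omega) : Y^u \in \fSac(P')\}$ is Borel by Lemma \ref{lem: measurable in zero} and $(t, P) \mapsto P_t$ is Borel by Lemma \ref{lem: map prod meas}, so the preimage is Borel; since $U$ is countable, the intersection $B_{\mathrm{ac}}$ is Borel as well.

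For $B_{\mathrm{inc}}$, the transfer identity tells us that, for each $u$, the density at $(s, \tilde\omega)$ equals $a^{P_t}_s(Y^u)(\theta_t \tilde\omega)$, where $a$ is the Borel density map from Lemma \ref{lem: measurable in zero}. Composing with Lemma \ref{lem: map prod meas} and the continuous map $(t, \tilde\omega) \mapsto \theta_t \tilde\omega$ makes the density jointly Borel in $(s, t, \omega, P, \tilde\omega)$, and assembling over the countable $U$ yields a jointly Borel $\bK^U$-valued map. Combined with Standing Assumption \ref{SA: meas gr} (Borel graph of $\Theta$) and the manifest Borel measurability of $(s, t, \omega, \tilde\omega) \mapsto (s + t, \omega \otimes_t \tilde\omega)$, the exceptional set
\[
N \triangleq \Big\{(t, \omega, P, s, \tilde\omega) : \Big(\tfrac{d\A^P(Y^u_{\cdot + t})}{d\llambda}(s, \tilde\omega)\Big)_{u \in U} \notin \Theta(s + t, \omega \otimes_t \tilde\omega)\Big\}
\]
is Borel. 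The a.e.\ condition then reads $\int_{\bR_+} \int_\Omega e^{-s}\, \1_N(t, \omega, P, s, \tilde\omega)\, P(d\tilde\omega)\, ds = 0$; by Fubini and the standard fact that $P \mapsto \int g\, dP$ is Borel for every bounded Borel $g$, this integral is Borel in $(t, \omega, P)$, and its zero-set $B_{\mathrm{inc}}$ is Borel as required.

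The main obstacle I foresee is the transfer step itself: one must verify carefully that the ``after time $t$'' semimartingale/compensator data really is transported to the standard semimartingale/compensator data for $Y^u$ under $P_t$, including a faithful use of the filtration identification in Lemma \ref{lem: filtraation shift} as a hypothesis for Lemma \ref{lem: jacod restatements}. Once that bookkeeping is nailed down, the remaining steps reduce to a countable intersection for $B_{\mathrm{ac}}$ and a routine Fubini argument together with the Borel-graph assumption on $\Theta$ for $B_{\mathrm{inc}}$.
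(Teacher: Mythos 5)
Your proposal is correct and follows essentially the same route as the paper: both transfer the ``after time $t$'' data to $(P_t, Y^u)$ via Lemmata \ref{lem: filtraation shift} and \ref{lem: jacod restatements}, then invoke the Borel density map of Lemma \ref{lem: measurable in zero}, the Borel measurability of $(t,P)\mapsto P_t$ from Lemma \ref{lem: map prod meas}, the Borel graph of $\Theta$, and the measurability of $P\mapsto\int g\,dP$. The only cosmetic difference is that the paper introduces an auxiliary concatenation $\widetilde{\otimes}_t$ to phrase the inclusion under $\llambda\otimes P_t$ directly, whereas you evaluate the density at $\theta_t\tilde\omega$ and integrate under $P$; these are equivalent bookkeeping choices.
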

\begin{proof}
	For \(\omega, \omega' \in \Omega\) and \(t \in \mathbb{R}_+\), we define the concatenation
	\[
	\omega\ \widetilde{\otimes}_t\ \omega' \triangleq  \omega \1_{[ 0, t)} + (\omega (t) + \omega' (\, \cdot - t) - \omega' (0)) \1_{[t, \infty)},
	\]
	and we notice that \((\omega\ \widetilde{\otimes}_t\ X) \circ \theta_t = \omega \otimes_t X\) for all \((t, \omega) \in \of 0, \infty\of\). Further, recall that, for every \(u \in U\), we assume that \(Y^u_{\cdot + t} = Y^u \circ \theta_t\) for all \(t \in \mathbb{R}_+\). Let \(a^P_t (Y^*) (\omega)\) be the map from Lemma~\ref{lem: measurable in zero}.
Thanks to Lemmata \ref{lem: filtraation shift} and \ref{lem: jacod restatements}, we get that
	\begin{align*}
	\big\{ 
	(&t, \omega, P) \colon \forall_{u \in U} \, Y^u \in \fSac (t, P), \, (\llambda \otimes P)\text{-a.e. }  (d\A^P (Y^u_{\cdot + t}) / d \llambda)_{u \in U} \in \Theta (\, \cdot + t, \omega \otimes_t X)
	\big\}
	\\&= \big\{ (t, \omega, P) \colon \forall_{u \in U} \hspace{0.1cm} P_t \in \fPas (Y^u), (\llambda \otimes P_t)\text{-a.e. }  (a^{P_t} (Y^u))_{u \in U} \in \Theta (\, \cdot + t, \omega \ \widetilde{\otimes}_t\ X)
	\big\}.
	\end{align*}
The latter set is Borel by Lemmata~\ref{lem: measurable in zero} and \ref{lem: map prod meas}, and \cite[Theorem 8.10.61]{bogachev}.
\end{proof}

\begin{corollary} \label{coro: MGC}
The measurable graph condition holds.
\end{corollary}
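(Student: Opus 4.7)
The plan is to observe that the graph of $\cA$ is the intersection of two sets: the set $E_{2}$ expressing the semimartingale/compensator condition, and the set
\[
E_{1} \triangleq \bigl\{ (t,\omega,P) \in \of 0,\infty\of \hspace{0.05cm}\times\hspace{0.05cm} \mathfrak{P}(\Omega) \colon P(X = \omega \text{ on } [0,t]) = 1 \bigr\}
\]
expressing the initial condition. The set $E_{2}$ has just been shown to be Borel in the lemma immediately preceding this corollary, so the only remaining task is to verify that $E_{1}$ is Borel; the intersection is then Borel, which yields the claim.

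To handle $E_{1}$, I would first show that the ``agreement set''
\[
B \triangleq \bigl\{ (t,\omega,\omega') \in \bR_{+} \times \Omega \times \Omega \colon \omega = \omega' \text{ on } [0,t] \bigr\}
\]
is Borel in $\bR_{+} \times \Omega \times \Omega$. Since the paths in $\Omega$ are \cadlag, agreement of $\omega$ and $\omega'$ on $[0,t]$ is equivalent, by right-continuity, to agreement on $(\mathbb{Q}_{+} \cap [0,t]) \cup \{t\}$. Fixing a metric $d_{F}$ on the Polish space $F$, the map
\[
\Psi(t,\omega,\omega') \triangleq \sup_{s \in \mathbb{Q}_{+} \cap [0,t]} d_{F}(\omega(s),\omega'(s)) \vee d_{F}(\omega(t),\omega'(t))
\]
is Borel measurable, because each map $(\omega,\omega') \mapsto d_{F}(\omega(s),\omega'(s))$ is Borel for fixed rational $s$, the countable supremum preserves Borel measurability, and the evaluation map $(t,\omega) \mapsto \omega(t)$ is jointly $\cB(\bR_{+}) \otimes \cF$-measurable (this is standard for \cadlag paths). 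Hence $B = \Psi^{-1}(\{0\})$ is Borel.

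Next I would invoke the classical measurability result (e.g.\ \cite[Proposition 7.29]{bertsekas}, or via \cite[Theorem 8.10.61]{bogachev}): for a Borel set $B$ in the product of two Polish spaces $Z \times \Omega$, the map $(z,P) \mapsto P(B_{z})$ from $Z \times \mathfrak{P}(\Omega)$ to $[0,1]$ is Borel measurable. Applying this with $Z = \bR_{+} \times \Omega$ shows that
\[
(t,\omega,P) \longmapsto P\bigl(\{\omega' \in \Omega \colon (t,\omega,\omega') \in B\}\bigr)
\]
is Borel. The set $E_{1}$ is the preimage of $\{1\}$ under this map, hence Borel, which completes the argument.

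I do not expect any genuine obstacle here; the only mild subtlety is the \cadlag reduction to a countable set of time points (which is why one cannot simply write ``$\omega = \omega'$ on $[0,t] \cap \mathbb{Q}$''; one must also include the endpoint $t$). Once $B$ is shown to be Borel, everything else is the standard measurable-selection toolbox already used earlier in the section.
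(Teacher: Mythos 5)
Your proposal is correct and follows essentially the same route as the paper: the corollary is obtained by combining the Borel-ness of the semimartingale/compensator set (the immediately preceding lemma) with the Borel-ness of the initial-condition set $\{(t,\omega,P)\colon P(X=\omega \text{ on } [0,t])=1\}$, which the paper treats as standard and leaves implicit. Your careful verification of the latter (reduction to $(\mathbb{Q}_+\cap[0,t])\cup\{t\}$ via right-continuity, then measurability of $(t,\omega,P)\mapsto P(B_{(t,\omega)})$) is exactly the detail being suppressed.
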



\subsection{Stability under Conditioning}
Next, we check stability under conditioning. In this section, we fix \((t^*, \omega^*) \in \of 0, \infty\of\), a stopping time \(\tau\) with \(t^* \leq \tau <\infty\), and a probability measure \(P\) on \((\Omega, \mathcal{F})\) such that \(P (X = \omega^* \text{ on } [0, t^*]) = 1\). We denote by \(P(\,\cdot\, | \mathcal{F}_\tau)\) a version of the regular conditional \(P\)-probability given~\(\mathcal{F}_\tau\).
We recall some simple observations.
\begin{lemma} \label{lem: collection easy observations}
\quad 
\begin{enumerate} [leftmargin=1cm]
    \item[\textup{(i)}]
There exists a \(P\)-null set \(N \in \mathcal{F}_\tau\) such that \(P (A | \mathcal{F}_\tau) (\omega) = \1_A (\omega)\) for all \(A \in \mathcal{F}_\tau\) and~\(\omega \not \in N\).
\item[\textup{(ii)}]
	If \(\omega \mapsto Q_\omega\) is a kernel from \(\mathcal{F}\) into \(\mathcal{F}\) such that 
	\(Q_\omega ( \omega = X \text{ on } [0, \tau(\omega)]) = 1\) for \(P\)-a.a. \(\omega \in \Omega\), then 
	\(
	Q_\omega (\omega \otimes_{\tau(\omega)} X = X) = 1
	\)
	for \(P\)-a.a. \(\omega \in \Omega\).
	In particular, there exists a \(P\)-null set \(N \in \mathcal{F}_\tau\) such that \(P (\omega \otimes_{\tau(\omega)} X = X | \mathcal{F}_\tau) (\omega) = 1\) for all \(\omega \not \in N\).
\item[\textup{(iii)}]
A measurable process \(H = (H_t)_{t \geq 0}\) on \((\Omega, \mathcal{F})\) is optional (resp. predictable) if and only if, for all \(t \in \bR_+\), \(H_t (\omega)\) depends on \(\omega\) only through the values \((\omega (s))_{s \leq t}\) (resp. \((\omega(s))_{s < t}\)). 
    \end{enumerate}
\end{lemma}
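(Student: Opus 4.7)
For part (i), the plan is to invoke the standard perfection property of regular conditional probabilities when the conditioning $\sigma$-algebra is countably generated. Since $F$ is Polish, the canonical filtration $\mathbf{F}$ is countably generated (by cylinder events at rational times through a countable base of $F$), and hence so is $\mathcal{F}_\tau$. Pick a countable algebra $\mathcal{A} \subset \mathcal{F}_\tau$ generating $\mathcal{F}_\tau$. For each $A \in \mathcal{A}$, the defining property of $P(\cdot \mid \mathcal{F}_\tau)$ yields a $P$-null set $N_A \in \mathcal{F}_\tau$ off of which $P(A \mid \mathcal{F}_\tau)(\omega) = \1_A(\omega)$; setting $N \triangleq \bigcup_{A \in \mathcal{A}} N_A$, the two set functions $B \mapsto P(B \mid \mathcal{F}_\tau)(\omega)$ and $B \mapsto \1_B(\omega)$ are probability measures on $\mathcal{F}_\tau$ for $\omega \notin N$ that agree on the $\pi$-system $\mathcal{A}$, hence on all of $\mathcal{F}_\tau$ by Dynkin's theorem.

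For part (ii), the first assertion follows from a direct pathwise computation. If $\omega'$ agrees with $\omega$ on $[0, \tau(\omega)]$, then on $[0, \tau(\omega))$ the concatenation satisfies $(\omega \otimes_{\tau(\omega)} \omega')(s) = \omega(s) = \omega'(s)$, while for $s \geq \tau(\omega)$ one computes $(\omega \otimes_{\tau(\omega)} \omega')(s) = \omega(\tau(\omega)) + \omega'(s) - \omega'(\tau(\omega)) = \omega'(s)$ upon using the identity $\omega'(\tau(\omega)) = \omega(\tau(\omega))$. For the second, ``in particular'', assertion, note that the stopped coordinate process $X^\tau = X_{\cdot \wedge \tau}$ is $\mathcal{F}_\tau$-measurable, and since $\tau$ is a stopping time of the natural filtration, $X^\tau(\omega') = X^\tau(\omega)$ forces $\tau(\omega') = \tau(\omega)$ and hence $\omega' = \omega$ on $[0, \tau(\omega)]$. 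Applying (i) to the $\mathcal{F}_\tau$-measurable event $\{X^\tau = X^\tau(\omega)\}$ produces a $P$-null set $N \in \mathcal{F}_\tau$ off of which $P(X = \omega \text{ on } [0, \tau(\omega)] \mid \mathcal{F}_\tau)(\omega) = 1$; combining this with the first assertion gives the claim.

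For part (iii), the plan is a monotone class argument. The predictable $\sigma$-field is generated by the $\pi$-system of rectangles $\{0\} \times A_0$ with $A_0 \in \mathcal{F}_0$ and $(s_1, s_2] \times A$ with $A \in \mathcal{F}_{s_1}$ and $0 \leq s_1 < s_2$. For these generators one checks directly that $(r, X) \mapsto H_{r + t}(\omega \otimes_t X)$ is predictable for the right-continuous natural filtration of $X_{\cdot + t}$: when $s_1 \geq t$, the spatial indicator $\1_A(\omega \otimes_t X)$ depends on $X$ only through $X_{\cdot + t}$ on $[0, s_1 - t]$ (since $(\omega \otimes_t X)_u$ is determined by $\omega$ up to time $t$ and by $X_{\cdot + t}$ on $[0, u - t]$ for $u \geq t$), hence is measurable with respect to the shifted filtration at time $s_1 - t$, while the shifted time indicator is left-continuous; the cases $s_1 < t$ and $\{0\} \times A_0$ are elementary (the shifted process is either a constant in $X$ times a left-continuous indicator, or identically zero for $t > 0$). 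A standard monotone class argument then promotes the conclusion to all predictable $H$. I do not anticipate a serious obstacle, since (i) is the classical ``perfection of RCP'' and the content of (ii) reduces to a pathwise identity combined with (i); the only non-trivial bookkeeping is in (iii), where one must track how the concatenation interacts with adaptedness, which is precisely why the generating-rectangle computation is spelled out before invoking the class theorem.
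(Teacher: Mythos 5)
Your proposal is correct and fills in exactly the standard arguments that the paper outsources to references: part (i) is the classical perfection of regular conditional probabilities via a countable generating $\pi$-system (the content of \cite[Theorem 1.1.8]{stroock2007multidimensional}, which the paper cites), part (ii) is the pathwise identity plus Galmarino's test that the paper declares obvious, and part (iii) is the monotone class argument over predictable rectangles underlying \cite[Proposition 10.35]{jacod79}. No gaps; this is essentially the same route, just written out in full.
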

\begin{proof}
	Part (i) is classical (\cite[Theorem 1.1.8]{stroock2007multidimensional}), part (ii) is obvious and part (iii) follows from \cite[Theorem IV.97]{DM78}.
\end{proof}

In the following we use the standard notation \(M^s = M_{\cdot \wedge s}\) for a process \(M\) and a time~\(s\).
The next lemma is implied by \cite[Theorem~1.2.10]{stroock2007multidimensional}.
\begin{lemma} \label{lem: stroock varadhan cond mg}
	Let \(\mathbf{G} = (\mathcal{G}_t)_{t \geq 0}\)  be the filtration on \((\Omega, \mathcal{F})\) that is generated by a \cadlag process with values in a Polish space. Furthermore, let \(\rho\) be a \(\mathbf{G}\)-stopping time such that \(\rho \geq t^*\).
    If \(M - M^{t^*}\) is a \(P\)-\(\mathbf{G}\)-martingale, then there exists a \(P\)-null set \(N\) such that \(M - M^{\rho (\omega)}\) is a \(P(\,\cdot\, | \mathcal{G}_\rho) (\omega)\)-\(\mathbf{G}\)-martingale for all \(\omega \not \in N\).
\end{lemma}

\begin{lemma} \label{lem: characteristics under conditioning}
Suppose that \(Y \in \fSss (t^*, P)\). Then, there exists a \(P\)-null set \(N\) such that, for every \(\omega \not \in N\), \(Y \in \fSss (\tau (\omega), P (\,\cdot\, | \cF_\tau)(\omega))\) and \(P(\,\cdot\,| \cF_\tau)(\omega)\)-a.s.
\[
\A^{P (\,\cdot\, | \cF_\tau) (\omega)} (Y_{\cdot + \tau(\omega)}) = (\A^P_{\cdot + \tau (\omega) - t^*} (Y_{\cdot + t^*}) - \A^P_{\tau (\omega) - t^*} (Y_{\cdot + t^*})) (\omega \otimes_{\tau(\omega)} X).
\]
Moreover, when \(\nu^P\) denotes the third \(P\)-characteristic of \(Y_{\cdot + t^*}\), then, for every \(\omega \not \in N\), the third \(P(\,\cdot\,| \cF_\tau)(\omega)\)-characteristic of \(Y_{\cdot + \tau(\omega)}\) (for the right-continuous natural filtration generated by \(X_{\cdot + \tau (\omega)}\)) is given by 
\[
\nu^P  (\omega \otimes_{\tau(\omega)} X; \tau (\omega) - t^* + dt, dx).
\]
\end{lemma}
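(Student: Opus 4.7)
My plan is to reduce the compensator statement to its local-martingale counterpart Corollary~\ref{coro: loc mg stroock varadhan}, then match predictability via Lemma~\ref{lem: collection easy observations}(iii) and invoke uniqueness of the canonical decomposition. Since \(Y \in \fSss(t^*, P)\), the process
\[
M_t \triangleq \big(Y_t - Y_{t^*} - \A^P_{t - t^*}(Y_{\cdot + t^*})\big)\1_{\{t \geq t^*\}}
\]
is a local \(P\)-martingale. Formally this property is initially phrased on the right-continuous natural filtration of \(X_{\cdot + t^*}\), but since \(M^{t^*} \equiv 0\) there is no genuine initial-enlargement issue when passing to \((\cF_{t+})_{t \geq 0}\), which is the filtration on which Corollary~\ref{coro: loc mg stroock varadhan} operates (this transfer uses Lemma~\ref{lem: filtraation shift}). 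As \(M - M^{t^*} = M\), Corollary~\ref{coro: loc mg stroock varadhan} furnishes a \(P\)-null set \(N_1\) such that, for every \(\omega \notin N_1\), \(M - M^{\tau(\omega)}\) is a local \(P(\cdot|\cF_\tau)(\omega)\)-martingale.

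Enlarging \(N_1\) using Lemma~\ref{lem: collection easy observations}(ii), I may additionally assume that \(P(\cdot|\cF_\tau)(\omega)\)-a.s. \(\omega \otimes_{\tau(\omega)} X = X\). Reindexing time via \(s \mapsto s + \tau(\omega)\) and using this path identification, the local-martingale statement above rewrites as: the process
\[
N_s \triangleq Y_{s + \tau(\omega)} - Y_{\tau(\omega)} - \big[\A^P_{s + \tau(\omega) - t^*}(Y_{\cdot + t^*}) - \A^P_{\tau(\omega) - t^*}(Y_{\cdot + t^*})\big](\omega \otimes_{\tau(\omega)} X), \quad s \geq 0,
\]
is a local \(P(\cdot|\cF_\tau)(\omega)\)-martingale for the right-continuous natural filtration of \(X_{\cdot + \tau(\omega)}\). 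By Lemma~\ref{lem: collection easy observations}(iii), the bracketed finite-variation process is predictable for this filtration, starts at zero, and is of (locally) finite variation. Uniqueness of the canonical decomposition of a special semimartingale then identifies it as \(\A^{P(\cdot|\cF_\tau)(\omega)}(Y_{\cdot + \tau(\omega)})\), giving the first formula.

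For the third characteristic I would rerun this blueprint using the characterizing property of \(\nu^P\): for every bounded predictable \(W\colon \Omega \times \bR_+ \times \bR \to \bR\) with \(W\) vanishing on \([0, t^*]\), the process \(W * \mu^Y - W * \nu^P\) is a local \(P\)-martingale starting at zero up to \(t^*\); apply Corollary~\ref{coro: loc mg stroock varadhan}, verify predictability of the shifted random measure \(\nu^P(\omega \otimes_{\tau(\omega)} X; \cdot + \tau(\omega) - t^*, dx)\) via Lemma~\ref{lem: collection easy observations}(iii), and conclude by uniqueness of the predictable compensator of the integer-valued random measure \(\mu^Y\). The main obstacle is the careful juggling of three filtrations (the natural filtration of \(X_{\cdot + t^*}\), of \(X\), and of \(X_{\cdot + \tau(\omega)}\)) together with the preservation of predictability after the composition with \(\omega \otimes_{\tau(\omega)}\); this is what forces the initial restriction to the full-measure event provided by Lemma~\ref{lem: collection easy observations}(ii), after which everything transfers cleanly.
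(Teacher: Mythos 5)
Your proposal is correct and follows essentially the same route as the paper: reduce to Corollary~\ref{coro: loc mg stroock varadhan}, use parts (ii) and (iii) of Lemma~\ref{lem: collection easy observations} to identify paths and secure predictability after the time shift, and conclude by uniqueness of the canonical decomposition, respectively by the test-function characterization of the third characteristic. The only cosmetic difference is that you characterize \(\nu^P\) via general bounded predictable integrands rather than the countable family of jump test functions \(g\) from \cite[Theorem II.2.21]{JS}, which is an equivalent formulation.
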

\begin{proof}
Let \(\mathbf{G}^{t^*} = (\mathcal{G}^{t^*}_t)_{t \geq 0}\) be the filtration generated by \(X_{\cdot + t^*}\) and let \((\tau_n)_{n = 1}^\infty\) be a localizing sequence for the \(P\)-\(\mathbf{G}^{t^*}\)-local martingale
\[
M \triangleq  Y_{\cdot + t^*}  - \A^P (Y_{\cdot + t^*}).
\] 
By Lemma~\ref{lem: stroock varadhan cond mg}, there exists a \(P\)-null set \(N\) such that \(M^{\tau_n} - M^{\tau_n \wedge (\tau(\omega) - t^*)}\) is a \(P(\,\cdot\,| \mathcal{G}^{t^*}_{\tau - t^*})(\omega)\)-\(\mathbf{G}^{t^*}\)-martingale for every \(\omega \not \in N\). As \(P (X = \omega^* \text{ on } [0, t^*]) = 1\), we have \(P\)-a.s. \(P (\, \cdot\, | \mathcal{G}^{t^*}_{\tau - t^*}) = P (\, \cdot\, | \mathcal{F}_\tau)\). Hence, possibly making \(N\) a bit larger, \(M^{\tau_n} - M^{\tau_n \wedge (\tau(\omega) - t^*)}\) is a \(P(\,\cdot\,| \cF_\tau)(\omega)\)-\(\mathbf{G}^{t^*}\)-martingale for every \(\omega \not \in N\).
Inserting \(s \mapsto s + \tau (\omega) - t^*\) with \(\omega \not \in N\) shows that the process
\[
Y_{\cdot  \wedge \rho_n + \tau (\omega)} - Y_{\tau (\omega)} - (\A^P_{\cdot \wedge \rho_n + \tau(\omega) - t^*} (Y_{\cdot + t^*}) - \A^P_{\tau (\omega) - t^*} (Y_{\cdot + t^*}))
\]
is a \(P(\,\cdot\, | \mathcal{F}_\tau) (\omega)\)-\(\mathbf{G}^{t^*}_{\cdot  + \tau(\omega) - t^*}\)-martingale, where 
\[
\rho_n \triangleq [ \tau_n - \tau(\omega) + t^*] \vee 0, \quad n = 1, 2, \dots.
\]
By Lemma~\ref{lem: collection easy observations}, possibly making \(N\) again larger, we have \(P (\omega \otimes_{\tau(\omega)} X = X | \cF_\tau)(\omega) = 1\) for all \(\omega \not \in N\). Hence, for all \(\omega \not \in N\), using that \(\mathbf{G}^{\tau (\omega)} \subset \mathbf{G}^{t^*}_{\cdot + \tau(\omega) - t^*}\) and Lemma~\ref{lem: collection easy observations}~(iii), the tower rule shows that the process
\[
(Y_{\cdot \wedge \rho_n + \tau (\omega)} - Y_{\tau (\omega)} - (\A^P_{\cdot \wedge \rho_n + \tau(\omega) - t^*} (Y_{\cdot + t^*}) - \A^P_{\tau (\omega) - t^*} (Y_{\cdot + t^*}))) (\omega \otimes_{\tau (\omega)} X)
\]
is a \(P(\,\cdot\, | \mathcal{F}_\tau) (\omega)\)-\(\mathbf{G}^{\tau(\omega)}\)-martingale. Notice that \(\rho_n (\omega \otimes_{\tau(\omega)} X)\) is a \(\mathbf{G}^{\tau(\omega)}\)-stopping time by Galmarino's test (\cite[Theorem~IV.100]{DM78}).
Finally, since
\[
E^P \Big[ P \Big( \liminf_{n \to \infty} \tau_n < \infty \mid \cF_\tau \Big) \Big] = P \Big( \liminf_{n \to \infty} \tau_n < \infty \Big) = 0,
\]
we can enlarge \(N\) a last time such that also \(P(\tau_n \to \infty | \cF_\tau) (\omega) = 1\) for all \(\omega \not \in N\). Of course, \(\tau_n \to \infty\) implies that \(\rho_n \to \infty\).
In summary, by virtue of Lemma~\ref{lem: collection easy observations}~(iii), for all \(\omega \not \in N\), the process \(Y_{\cdot + \tau(\omega)}\) is a special \(P (\, \cdot\, | \cF_\tau)(\omega)\)-\(\mathbf{G}^{\tau(\omega)}\)-semimartingale with predictable compensator 
\[
(\A^P_{\cdot + \tau(\omega) - t^*} (Y_{\cdot + t^*}) - \A^P_{\tau (\omega) - t^*} (Y_{\cdot + t^*}))) (\omega \otimes_{\tau (\omega)} X).
\]
This finishes the proof of the first claim.

To prove the second claim, let \(g \colon \mathbb{R} \to \mathbb{R}\) be a bounded Borel function which vanishes around the origin. Then, arguing as above, possibly making \(N\) a bit larger, for every \(\omega \not \in N\), the process
\[
\sum_{s \leq \cdot} g ( \Delta (Y_{\cdot + \tau(\omega)})_s) - \int_0^\cdot \hspace{-0.1cm} \int g (x) \nu^P  (\omega \otimes_{\tau(\omega)} X; \tau (\omega) - t^* + dt, dx)
\]
is a local \(P (\, \cdot\, | \cF_\tau)(\omega)\)-\(\mathbf{G}^{\tau(\omega)}\)-martingale. The third characteristic can be characterized by these local martingale properties for countably many test functions \(g\), see \cite[Theorem~II.2.21]{JS}. Thus, the formula for the third \(P(\, \cdot\, | \cF_\tau) (\omega)\)-characteristic follows. 
The proof is complete.
\end{proof}

We are in the position to deduce stability under conditioning. 
\begin{lemma} \label{coro: stab cond}
    If \(P \in \cA (t^*, \omega^*)\), then \(P\)-a.s. \(P (\, \cdot\, | \mathcal{F}_\tau) \in \cA (\tau, X)\). 
\end{lemma}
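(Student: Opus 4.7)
My plan is to verify the three defining properties of $\cA(\tau(\omega), \omega)$ for the regular conditional probability $P(\cdot\mid\cF_\tau)(\omega)$, for $P$-a.a. $\omega\in\Omega$; since $U$ is countable, all null sets encountered below can be combined into a single $P$-null set.

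\textbf{Path-matching property.} The event $\{X=\omega \text{ on } [0,\tau(\omega)]\}$ is $\cF_\tau$-measurable, so by Lemma~\ref{lem: collection easy observations}(i) there is a $P$-null set outside which $P(X=\omega \text{ on }[0,\tau(\omega)]\mid\cF_\tau)(\omega)=1$. This also delivers that under $P(\cdot\mid\cF_\tau)(\omega)$ one has $\omega\otimes_{\tau(\omega)}X=X$ almost surely, a fact used in the final step.

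\textbf{Semimartingale property and a formula for the compensator density.} Because $P\in\cA(t^*,\omega^*)$, each $Y^u$ is in $\fSac(t^*,P)$. Lemma~\ref{lem: characteristics under conditioning}, applied separately for each $u\in U$, gives a $P$-null set outside which $Y^u\in\fSss(\tau(\omega),P(\cdot\mid\cF_\tau)(\omega))$ with
\[
\A^{P(\cdot\mid\cF_\tau)(\omega)}(Y^u_{\cdot+\tau(\omega)})_t=\bigl(\A^P_{t+\tau(\omega)-t^*}(Y^u_{\cdot+t^*})-\A^P_{\tau(\omega)-t^*}(Y^u_{\cdot+t^*})\bigr)(\omega\otimes_{\tau(\omega)}X).
\]
The right-hand side, being the difference of $\A^P(Y^u_{\cdot+t^*})$ evaluated at two times, is absolutely continuous in $t$ since $Y^u\in\fSac(t^*,P)$. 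Hence $Y^u\in\fSac(\tau(\omega),P(\cdot\mid\cF_\tau)(\omega))$ with Radon--Nikodym density
\[
\frac{d\A^{P(\cdot\mid\cF_\tau)(\omega)}(Y^u_{\cdot+\tau(\omega)})}{d\llambda}(t,\omega')=a^P(Y^u)\bigl(t+\tau(\omega)-t^*,\,\omega\otimes_{\tau(\omega)}\omega'\bigr),
\]
where $a^P(Y^u)=d\A^P(Y^u_{\cdot+t^*})/d\llambda$ is the predictable density from Lemma~\ref{lem: measurable in zero}.

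\textbf{Inclusion into $\Theta$.} Since $P\in\cA(t^*,\omega^*)$ and $P$-a.s. $\omega^*\otimes_{t^*}X=X$, we have $(\llambda\otimes P)$-a.e. $(a^P(Y^u)(s,X))_{u\in U}\in\Theta(s+t^*,X)$. Writing this inclusion as an integral of an indicator against $\llambda\otimes P$ and disintegrating via $P=\int P(\cdot\mid\cF_\tau)(\omega)\,P(d\omega)$, Fubini yields a further $P$-null set outside which the same inclusion holds $(\llambda\otimes P(\cdot\mid\cF_\tau)(\omega))$-a.e. Combining with the path-matching property (so $\omega\otimes_{\tau(\omega)}\omega'=\omega'$ on the support of $P(\cdot\mid\cF_\tau)(\omega)$) and substituting $s=t+\tau(\omega)-t^*$, the density formula from the previous step converts this into
\[
\Bigl(\tfrac{d\A^{P(\cdot\mid\cF_\tau)(\omega)}(Y^u_{\cdot+\tau(\omega)})}{d\llambda}(t,\omega')\Bigr)_{u\in U}\in\Theta\bigl(t+\tau(\omega),\,\omega\otimes_{\tau(\omega)}\omega'\bigr),
\]
$(\llambda\otimes P(\cdot\mid\cF_\tau)(\omega))$-a.e., which is exactly property (3) of $\cA(\tau(\omega),\omega)$.

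The only delicate points are (a) making the countable family of null sets from Lemma~\ref{lem: characteristics under conditioning} coincide into one $P$-null set, and (b) verifying that the time shift $s\mapsto s+\tau(\omega)-t^*$ and the replacement of $X$ by $\omega\otimes_{\tau(\omega)}X$ preserve the $(\llambda\otimes P(\cdot\mid\cF_\tau)(\omega))$-a.e. inclusion; both are handled by the explicit compensator formula from Lemma~\ref{lem: characteristics under conditioning} combined with the disintegration identity and Fubini, so no genuine obstacle arises.
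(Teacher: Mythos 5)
Your proposal is correct and follows essentially the same route as the paper's proof: path matching via Lemma \ref{lem: collection easy observations} (i)--(ii), the conditional compensator formula of Lemma \ref{lem: characteristics under conditioning} (with countability of \(U\) to merge null sets), and a disintegration/Fubini--tower-rule argument to transfer the \((\llambda\otimes P)\)-a.e. inclusion in \(\Theta\) to the conditional laws after the time shift. The paper just carries out your third step as one explicit chain of iterated-integral identities, but the content is the same.
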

\begin{proof}
    By Lemma \ref{lem: collection easy observations} (i), \(P (\tau = \tau (\omega)| \mathcal{F}_\tau)(\omega) = 1\) and 
    \(
    P(X = \omega \text{ on } [0, \tau(\omega)] | \cF_\tau )(\omega) = 1
    \)
    for \(P\)-a.a. \(\omega \in \Omega\).
    Thanks to Lemma \ref{lem: characteristics under conditioning}, as the set \(U\) is assumed to be countable, for \(P\)-a.a. \(\omega \in \Omega\) and all \(u \in U\), we have \(Y^u \in \fSac (\tau (\omega), P(\, \cdot\, | \cF_\tau)(\omega))\) and \(P(\, \cdot\, | \cF_\tau)(\omega)\)-a.s.
    \[
    d\A^{P (\, \cdot\, | \cF_\tau) (\omega)} (Y^u_{\cdot + \tau(\omega)}) / d \llambda = (d \A^P (Y^u_{\cdot + t^*}) / d \llambda) (\, \cdot + \tau (\omega) - t^*, \omega \otimes_{\tau(\omega)} X).
    \]
    We get from Fubini's theorem, the tower rule and \(P \in \cA(t^*, \omega^*)\) that 
    \begin{align*}
       &\iint_{0}^\infty P\big( (d\A^{P (\, \cdot\, | \cF_\tau) (\omega)} (Y^u_{\cdot + \tau(\omega)}) / d \llambda)_{u \in U} \hspace{0.05cm} (t, X) \not \in \Theta (t + \tau(\omega),X) | \mathcal{F}_\tau \big) (\omega) dt P(d \omega) 
        \\&\hspace{0.4cm}= \iint_{0}^\infty P \big( (d \A^P (Y^u_{\cdot + t^*}) / d \llambda)_{u \in U} \hspace{0.05cm} (t + \tau(\omega) - t^*, X) \not \in \Theta (t + \tau(\omega), X)  | \mathcal{F}_\tau \big) (\omega) dt P(d \omega) 
        \\&\hspace{0.4cm} = \int  E^P \Big[ \int_{\tau (\omega) - t^*}^\infty \1 \{ (d \A^P (Y^u_{\cdot + t^*}) / d \llambda)_{u \in U} \hspace{0.05cm} (t, X) \not\in \Theta (t + t^*, X) \} dt \big| \mathcal{F}_\tau\Big] (\omega) P(d\omega)
        \\&\hspace{0.4cm} = \int  E^P \Big[ \int_{\tau - t^*}^\infty \1 \{ (d \A^P (Y^u_{\cdot + t^*}) / d \llambda)_{u \in U} \hspace{0.05cm} (t, X) \not\in \Theta (t + t^*, X) \} dt \big| \mathcal{F}_\tau\Big] (\omega) P(d\omega)
        \\&\hspace{0.4cm} = E^P \Big[ \int_{\tau - t^*}^\infty \1 \{ (d \A^P (Y^u_{\cdot + t^*}) / d \llambda)_{u \in U} \hspace{0.05cm} (t, X) \not \in \Theta (t + t^*, X) \} dt \Big] = 0.
    \end{align*}
    This observation completes the proof of \(P\)-a.s. \(P(\, \cdot\, | \mathcal{F}_\tau) \in \cA(\tau, X)\). 
\end{proof}


\subsection{Stability under Pasting}
In this section we check stability under pasting. The proof is split into several steps. Throughout this section, we fix \((t^*, \omega^*) \in \of 0, \infty\of\), a probability measure \(P\) on \((\Omega, \mathcal{F})\) such that \(P (X = \omega^* \text{ on } [0, t^*]) = 1\), a stopping time \(\tau\) with \(t^* \leq \tau < \infty\), and an \(\mathcal{F}_\tau\)-measurable map \(\Omega \ni \omega \mapsto Q_\omega \in \mathfrak{P}(\Omega)\) such that 
	\(Q_\omega ( X = \omega \text{ on } [0, \tau(\omega)]) = 1\) for \(P\)-a.a. \(\omega \in \Omega\). To simplicity our notation, we set \(\overline{P} \triangleq P \otimes_\tau Q\). 
The following corollary is an immediate consequence of part (ii) of Lemma \ref{lem: collection easy observations}.
\begin{corollary} \label{coro: meas identity}
		\(\overline{P} = E^P \big[ Q (\, \cdot\, ) \big]\).
\end{corollary}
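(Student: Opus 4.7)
My plan is to unwind the definition of the pasting $P\otimes_\tau Q$ and then use part (ii) of Lemma \ref{lem: collection easy observations} to collapse the indicator. Fix $A\in\cF$. By definition,
\[
\overline{P}(A) \;=\; \iint \1_A\bigl(\omega \otimes_{\tau(\omega)} \omega'\bigr)\, Q_\omega(d\omega')\, P(d\omega) \;=\; \int Q_\omega\bigl(\omega \otimes_{\tau(\omega)} X \in A\bigr)\, P(d\omega),
\]
where the second equality uses that $X$ is the canonical process on $\Omega$.

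The key step is to remove the concatenation inside the $Q_\omega$-probability. Since we assumed that $Q_\omega(X=\omega \text{ on }[0,\tau(\omega)])=1$ for $P$-a.a.\ $\omega\in\Omega$, part (ii) of Lemma \ref{lem: collection easy observations} yields a $P$-null set $N\in\cF_\tau$ such that $Q_\omega(\omega \otimes_{\tau(\omega)} X = X)=1$ for every $\omega \notin N$. Consequently, for every such $\omega$,
\[
Q_\omega\bigl(\omega \otimes_{\tau(\omega)} X \in A\bigr) \;=\; Q_\omega(X \in A) \;=\; Q_\omega(A).
\]

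Inserting this identity into the displayed formula for $\overline{P}(A)$ and noting that the exceptional set is $P$-negligible, we obtain
\[
\overline{P}(A) \;=\; \int Q_\omega(A)\, P(d\omega) \;=\; E^P\bigl[ Q(\cdot)(A) \bigr],
\]
which is the claim. I do not expect any genuine obstacle here: the statement is essentially a bookkeeping consequence of the definition of $P\otimes_\tau Q$ combined with the already-established fact that the concatenation acts trivially under $Q_\omega$ on a $P$-full set. The only mild subtlety is measurability of $\omega \mapsto Q_\omega(A)$, but this is built into the standing hypothesis that $\omega\mapsto Q_\omega$ is an $\cF_\tau$-measurable kernel.
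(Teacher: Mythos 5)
Your argument is correct and is exactly the route the paper intends: the corollary is stated there as an immediate consequence of part (ii) of Lemma \ref{lem: collection easy observations}, using the standing hypothesis of that section that \(Q_\omega(X=\omega \text{ on } [0,\tau(\omega)])=1\) for \(P\)-a.a.\ \(\omega\), precisely as you do. Your write-up just spells out the bookkeeping the paper leaves implicit.
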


The next lemma follows identically to \cite[Lemma 3.14]{CN22a}. We omit the details.
\begin{lemma} \label{lem: identiy up to tau}
\(P = \overline{P}\) on \(\mathcal{F}_\tau\) and  \(\overline{P}\)-a.s. \(\overline{P} (\, \cdot\, | \mathcal{F}_\tau) = Q\).
\end{lemma}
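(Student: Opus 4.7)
\medskip

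\textbf{Proof plan for Lemma \ref{lem: identiy up to tau}.}

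The plan is a direct computation with the defining formula of the pasting measure
\[
(P \otimes_\tau Q)(A) = \iint \1_A(\omega \otimes_{\tau(\omega)} \omega') \, Q_\omega(d\omega') \, P(d\omega),
\]
together with the hypothesis that, for \(P\)-a.a. \(\omega\), \(Q_\omega\) is concentrated on paths which coincide with \(\omega\) on \([0, \tau(\omega)]\). The key observation, which I would isolate up front, is that on these paths the concatenation is trivial in the sense that \(\omega \otimes_{\tau(\omega)} \omega' = \omega'\) (on \([0,\tau(\omega))\) both sides equal \(\omega\), and on \([\tau(\omega), \infty)\) the definition gives \(\omega(\tau(\omega)) + \omega' - \omega'(\tau(\omega)) = \omega'\) because \(\omega'(\tau(\omega)) = \omega(\tau(\omega))\)). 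Symmetrically, on such paths we also have \(\omega \otimes_{\tau(\omega)} \omega' = \omega\) up to time \(\tau(\omega)\).

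For the first assertion, I would take \(A \in \mathcal{F}_\tau\). Since \(\1_A\) only depends on the path stopped at \(\tau\), and since \(\omega \otimes_{\tau(\omega)} \omega'\) agrees with \(\omega\) on \([0,\tau(\omega)]\), we get \(\1_A(\omega \otimes_{\tau(\omega)} \omega') = \1_A(\omega)\) for \(P\otimes Q\)-a.a.\ \((\omega,\omega')\). Plugging in,
\[
\overline{P}(A) = \int \1_A(\omega) \, Q_\omega(\Omega) \, P(d\omega) = P(A).
\]

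For the second assertion, I would verify the characterizing identity of a regular conditional probability: for every \(A \in \mathcal{F}\) and \(B \in \mathcal{F}_\tau\),
\[
\overline{P}(A \cap B) = \int_B Q_\omega(A) \, \overline{P}(d\omega).
\]
The computation splits exactly along the two instances of the key observation above: the factor \(\1_B(\omega \otimes_{\tau(\omega)} \omega')\) collapses to \(\1_B(\omega)\) and can be pulled out of the inner \(Q_\omega\) integral, while the remaining factor satisfies \(\1_A(\omega \otimes_{\tau(\omega)} \omega') = \1_A(\omega')\) for \(Q_\omega\)-a.a.\ \(\omega'\), so that the inner integral equals \(Q_\omega(A)\). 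This yields \(\overline{P}(A\cap B) = \int_B Q_\omega(A) P(d\omega)\), and replacing \(P\) by \(\overline{P}\) using the first part concludes the argument. The \(\mathcal{F}_\tau\)-measurability of \(\omega \mapsto Q_\omega(A)\) is built into the assumption on the kernel.

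I do not expect a genuine obstacle here; the whole proof is just careful bookkeeping with the concatenation formula, and (as the author notes) it mirrors verbatim the argument of \cite[Lemma 3.14]{CN22a}. The only subtle point is making sure the ``matching at \(\tau(\omega)\)'' step is applied in both directions — once to read \(\omega \otimes_{\tau(\omega)} \omega'\) as \(\omega\) on \([0,\tau(\omega)]\), and once to read it as \(\omega'\) globally — which is precisely why the assumption \(Q_\omega(X = \omega \text{ on }[0,\tau(\omega)]) = 1\) is imposed on the kernel.
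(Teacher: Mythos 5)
Your proof is correct and is essentially the argument the paper has in mind: the paper itself omits the details, merely citing \cite[Lemma 3.14]{CN22a}, and that argument is exactly your computation, with the two collapses of the concatenation (reading \(\omega \otimes_{\tau(\omega)} \omega'\) as \(\omega\) on \(\of 0, \tau\gs\) for the first claim, and as \(\omega'\) globally on the \(Q_\omega\)-full set where \(\omega' = \omega\) on \([0,\tau(\omega)]\) for the second) corresponding to part (ii) of Lemma \ref{lem: collection easy observations}.
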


Clearly, Lemma \ref{lem: identiy up to tau} yields that \(\overline{P} ( X = \omega^* \text{ on } [0, t^*] ) = P ( X = \omega^* \text{ on } [0, t^*] ) = 1.\)

\begin{lemma} 
	\label{lem_pasting_s}
	Suppose that \(Y \in \fSac (t^*, P)\). If \(Y \in \fSac (\tau (\omega), Q_\omega)\) for \(P\)-a.a. \(\omega \in \Omega\), then \(Y \in \fSac (t^*, \overline{P})\).
\end{lemma}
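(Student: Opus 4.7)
The plan is to exhibit an explicit candidate for the $\overline{P}$-compensator of $Y_{\cdot+t^*}$ by concatenating the $P$-compensator on $[0,\tau-t^*]$ with the $Q_\omega$-compensator of the twice-shifted process $Y_{\cdot+\tau(\omega)}$ thereafter, and then verify the defining local martingale property by appealing to the pasting structure of $\overline{P}$ recorded in Lemma \ref{lem: identiy up to tau}. Concretely, for $s\geq 0$ and $\omega\in\Omega$, I would set
\[
\tilde A_s(\omega) \triangleq \A^P_{s\wedge(\tau(\omega)-t^*)}(Y_{\cdot+t^*})(\omega) + \mathbf 1_{\{s>\tau(\omega)-t^*\}}\,\A^{Q_\omega}_{s-(\tau(\omega)-t^*)}(Y_{\cdot+\tau(\omega)})(\omega).
\]
Absolute continuity of $s\mapsto\tilde A_s(\omega)$ for $\overline{P}$-a.a.\ $\omega$ is immediate from the two absolute-continuity hypotheses. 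Predictability of $\tilde A$ with respect to the right-continuous natural filtration of $X_{\cdot+t^*}$ requires a jointly measurable selection of $\omega\mapsto \A^{Q_\omega}(Y_{\cdot+\tau(\omega)})$, which is delivered by the Borel dependence of the compensator on $P$ recorded in Lemma \ref{lem: measurable in zero}; together with the filtration identification of Lemma \ref{lem: filtraation shift}, this makes $\tilde A$ predictable for the correct filtration.

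Next I would show that $N_s := Y_{s+t^*}-Y_{t^*}-\tilde A_s$ is a local $\overline{P}$-martingale by splitting $N=N^{(1)}+N^{(2)}$, where $N^{(1)}$ is the $P$-local martingale $M^P := Y_{\cdot+t^*}-Y_{t^*}-\A^P(Y_{\cdot+t^*})$ stopped at time $\tau-t^*$, and
\[
N^{(2)}_s(\omega) := \mathbf 1_{\{s>\tau(\omega)-t^*\}}\bigl(Y_{s+t^*}(\omega)-Y_{\tau(\omega)}(\omega)-\A^{Q_\omega}_{s-(\tau(\omega)-t^*)}(Y_{\cdot+\tau(\omega)})(\omega)\bigr).
\]
Since $N^{(1)}$ depends only on the path up to time $\tau$, Lemma \ref{lem: identiy up to tau} (which gives $P=\overline{P}$ on $\mathcal F_\tau$) transfers the local $P$-martingale property of $M^P$ stopped at $\tau-t^*$ to a local $\overline{P}$-martingale property, via optional sampling along a localizing sequence. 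For $N^{(2)}$, the other half of Lemma \ref{lem: identiy up to tau}, namely $\overline{P}(\cdot\mid\mathcal F_\tau)=Q$ $\overline{P}$-a.s., combined with the hypothesis that under each $Q_\omega$ the process $Y_{\cdot+\tau(\omega)}-Y_{\tau(\omega)}-\A^{Q_\omega}(Y_{\cdot+\tau(\omega)})$ is a local martingale, upgrades after disintegration and Fubini to the local $\overline{P}$-martingale property of $N^{(2)}$.

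The main obstacle I anticipate is the reconciliation of filtrations: $\tau$ is only an $\F$-stopping time and need not be a stopping time for the right-continuous natural filtration $\G^+$ of $X_{\cdot+t^*}$, whereas the definition of $\fSac(t^*,\overline{P})$ is formulated in that filtration. I would circumvent this by localizing along a sequence $(\rho_n)$ of bounded $\G^+$-stopping times built from the relevant localizing sequences intersected with $n$, and verifying the defining identity $E^{\overline{P}}[N_{t\wedge\rho_n}\mathbf 1_B]=E^{\overline{P}}[N_{s\wedge\rho_n}\mathbf 1_B]$ for $B\in\G^+_s$ by conditioning on $\mathcal F_\tau$ and partitioning according to the position of $\tau-t^*$ relative to $s$ and $t$; the appropriate half of Lemma \ref{lem: identiy up to tau} applies cleanly on each of the three resulting pieces. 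Once $N$ is confirmed to be a local $\overline{P}$-martingale for $\G^+$, the decomposition $Y_{\cdot+t^*}=Y_{t^*}+N+\tilde A$ together with the predictability and absolute continuity of $\tilde A$ yields $Y\in\fSac(t^*,\overline{P})$, completing the proof.
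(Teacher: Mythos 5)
Your route differs from the paper's: you try to exhibit an explicit pasted compensator and verify the local martingale property directly by conditioning on \(\cF_\tau\), whereas the paper never writes down a pasted compensator at all. It first proves that \(Y_{\cdot+t^*}\) is an \(\overline{P}\)-semimartingale via the Bichteler--Dellacherie theorem, then obtains specialness from the criterion \((|x|^2\wedge|x|)*\nu^{\overline{P}}_T<\infty\) (identifying the conditional third characteristic through Lemma \ref{lem: characteristics under conditioning}), and finally gets absolute continuity of the compensator by combining Lemma \ref{lem: characteristics under conditioning}, applied under \(\overline{P}\), with the uniqueness of the Lebesgue decomposition. As written, your proposal has genuine gaps that this alternative structure is precisely designed to avoid.

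The first gap concerns your candidate \(\tilde A\) itself. Membership in \(\fSac(t^*,\overline{P})\) is defined through the right-continuous natural filtration of \(X_{\cdot+t^*}\), which carries no information about the path on \([0,t^*)\). Your \(\tilde A\) is built from \(\tau(\omega)\) and from \(\A^{Q_\omega}\), and both \(\tau\) and the kernel \(\omega\mapsto Q_\omega\) are merely \(\cF_\tau\)-measurable; in the generality of this lemma (\(P\) arbitrary, \(\tau\) an \(\F\)-stopping time) they may depend on \(X_s\), \(s<t^*\). Hence \(\tilde A\) need not even be adapted, let alone predictable, for the filtration in which the decomposition must live, and neither Lemma \ref{lem: measurable in zero} (joint Borel measurability in \((t,\omega,P)\)) nor Lemma \ref{lem: filtraation shift} (identification of the filtration generated by a c\`adl\`ag process) supplies this; your remark about \(\tau\) not being a stopping time of that filtration is only the visible tip of this problem. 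Repairing it by working in a larger (shifted or initially enlarged) filtration and projecting down is not free either, because the hypotheses give the local martingale properties of \(Y_{\cdot+t^*}-Y_{t^*}-\A^{P}(Y_{\cdot+t^*})\) and \(Y_{\cdot+\tau(\omega)}-Y_{\tau(\omega)}-\A^{Q_\omega}(Y_{\cdot+\tau(\omega)})\) only with respect to the small shifted natural filtrations, and these need not survive the enlargement, so the increments of your \(N^{(1)}\), \(N^{(2)}\) cannot be tested against arbitrary events of the bigger filtration. The second gap is the localization for \(N^{(2)}\): for \(P\)-a.e.\ \(\omega\) you only have a local \(Q_\omega\)-martingale with an \(\omega\)-dependent localizing sequence, and ``disintegration and Fubini'' do not by themselves yield a single localizing sequence of stopping times under \(\overline{P}\); one needs a measurable selection of localizing times or an integrability argument. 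The paper's Step 2 (specialness via the jump characteristic, using Lemma \ref{lem: identiy up to tau} and Lemma \ref{lem: characteristics under conditioning}) is exactly the device that sidesteps this gluing problem, and its Step 3 then only compares compensators that are already known to exist.
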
 

\begin{proof}
	\emph{Step 1: Semimartingale Property.}
    Let \(T > t^*\), and take a sequence \((H^n)_{n = 0}^\infty\) of simple predictable processes on \( \of t^*, T \gs \) such that \(H^n \to H^0\) uniformly in time and \(\omega\). Then, by Lemma~\ref{lem: identiy up to tau}, we get
	\begin{align*}
	E^{\overline{P}} \Big[ \Big| \int_{t^*}^T(H^n_s &- H_s) d Y_s\Big| \wedge 1 \Big] 
	\\&\leq E^{P} \Big[ \Big| \int_{t^*}^{\tau \wedge T} (H^n_s - H_s) d Y_s \Big| \wedge 1 \Big] \\&\qquad \qquad\qquad + \int E^{Q_\omega} \Big[ \Big|\int_{\tau (\omega) \wedge T}^T (H^n_s - H_s) (\omega \otimes_{\tau (\omega)} X) d Y_s\Big| \wedge 1 \Big] P(d \omega).
	\end{align*}
	The first term converges to zero as \(Y \in \fSss (t^*, P)\) and thanks to the Bichteler--Dellacherie (BD) Theorem (\cite[Theorem III.43]{protter}). The second term converges to zero by dominated convergence, the assumption that \(P\)-a.s. \(Y \in \fSss (\tau, Q)\) and, by virtue of part (iii) of Lemma~\ref{lem: collection easy observations}, again the BD Theorem. Consequently, invoking the BD Theorem a third time, but this time the converse direction, yields that \(Y \in \mathcal{S}(t^*, \overline{P})\).
	
	\emph{Step 2: Special Semimartingale Property}.
	Denote by \(\nu^{\overline{P}}\) (resp. \(\nu^P\)) the third characteristic of \(Y_{\cdot + t^*}\) under \(\overline{P}\) (resp. under \(P\)).
	By Lemma \ref{lem: identiy up to tau}, we obtain 
	\begin{align} \label{eq: first part finite special}
	    \overline{P} \big( (|x|^2 \wedge |x|) * \nu^{\overline{P}}_{\tau - t^*} < \infty \big) &= P \big( (|x|^2 \wedge |x|) * \nu^P_{\tau - t^*} < \infty \big) = 1. 
	\end{align}
	Furthermore, as \(Y \in \mathcal{S} (t^*, \overline{P})\) and \(\overline{P}\)-a.s. \(\overline{P}(\, \cdot\, |  \cF_\tau) = Q\) by Lemma \ref{lem: identiy up to tau}, we deduce from Lemma~\ref{lem: characteristics under conditioning} that, for \(\overline{P}\)-a.a. \(\omega \in \Omega\), the third \(Q_\omega\)-characteristic of \(Y_{\cdot + \tau(\omega)}\) is given by 
	\[
	\nu^{\overline{P}}  (\omega \otimes_{\tau(\omega)} X; \tau (\omega) - t^* + dt, dx).
	\]
	As \(P\)-a.s. \(Y \in \fSss (\tau, Q)\),  we have, for \(\overline{P}\)-a.a. \(\omega \in \Omega\) and all \(T > \tau (\omega) - t^*\), that
	\[
	Q_\omega \Big( \int_{\tau (\omega) - t^*}^T \int (|x|^2 \wedge |x|)\, \nu^{\overline{P}}  (\omega \otimes_{\tau(\omega)} X; dt, dx) < \infty \Big)= 1.
	\]
	By Lemmata \ref{lem: collection easy observations} [(i), (ii)] and \ref{lem: identiy up to tau}, this yields, for all \(T > 0\), that 
	\begin{equation} \label{eq: second part finite special}
	\begin{split}
	\overline{P} &\Big( \int_{\tau - t^*}^T \int (|x|^2 \wedge |x|)\, \nu^{\overline{P}}  (dt, dx) < \infty, T > \tau - t^* \Big) 
	\\&= \int_{\{T > \tau - t^*\}} \overline{P} \Big( \int_{\tau (\omega) - t^*}^T \int (|x|^2 \wedge |x|)\,\nu^{\overline{P}}  (\omega \otimes_{\tau(\omega)} X; dt, dx) < \infty \big| \cF_\tau \Big) (\omega) \overline{P}(d \omega)
	\\&= \int_{\{T > \tau - t^*\}} Q_\omega \Big( \int_{\tau (\omega) - t^*}^T \int (|x|^2 \wedge |x|)\, \nu^{\overline{P}}  (\omega \otimes_{\tau(\omega)} X; dt, dx) < \infty \Big) \overline{P}(d \omega)
	\\ &= \overline{P}(T > \tau - t^*).
	\end{split}
	\end{equation}
	Finally, \eqref{eq: first part finite special} and \eqref{eq: second part finite special} yield that 
	\[
	\overline{P} \big( (|x|^2 \wedge |x|) * \nu^{\overline{P}}_T < \infty \big) = 1, \quad T > 0.
	\]
	By virtue of \cite[Proposition II.2.29]{JS}, this implies that \(Y \in \fSss (t^*, \overline{P})\).
	
	\emph{Step 3: Absolutely Continuous  Compensator.} Let
	\[ \A^{\overline{P}} (Y_{\cdot + t^*}) = \int_{0}^\cdot \phi_s ds + \psi \]
	be the Lebesgue decomposition of (the paths) of $ \A^{\overline{P}} (Y_{\cdot + t^*})$, cf. \cite[Theorem 8.4.4]{benedetto}.
	Since \(Y \in \fSac (t^*, P)\) and $\overline{P} = P$ on $\cF_\tau$ by Lemma~\ref{lem: identiy up to tau}, we get that \( \A^{\overline{P}} (Y_{\cdot + t^*}) \ll \llambda\) on \(\of 0, \tau - t^*\gs\).
	Hence, by Corollary~\ref{coro: meas identity}, it suffices to show that
	$$ D \triangleq \Big\{ \A^{\overline{P}}_{\cdot + \tau - t^*} (Y_{\cdot + t^*}) -  \A^{\overline{P}}_{\tau - t^*} (Y_{\cdot + t^*}) \neq 
	          \int_{\tau - t^*}^{\cdot + \tau - t^*} \phi_s ds\Big
		    \}
    $$ is a \(Q_\omega\)-null set for \(\overline{P}\)-a.a. \(\omega \in \Omega\).
    Due to Lemmata \ref{lem: collection easy observations} (ii), \ref{lem: characteristics under conditioning} and \ref{lem: identiy up to tau}, for \(\overline{P}\)-a.a. \(\omega \in \Omega\), we have \(Q_\omega\)-a.s. 
    \[
    \A^{\overline{P}}_{\cdot + \tau (\omega) - t^*} (Y_{\cdot + t^*}) - \A^{\overline{P}}_{\tau (\omega) - t^*} (Y_{\cdot + t^*}) = \A^{Q_\omega} (Y_{\cdot + \tau(\omega)}).
    \]
    Since \(P\)-a.s. \(Y \in \fSac (\tau, Q)\), the uniqueness of the Lebesgue decomposition yields that, for \(\overline{P}\)-a.a. \(\omega \in \Omega\),
    \begin{align*} Q_\omega \Big( \A^{\overline{P}}_{\cdot + \tau (\omega) - t^*} (Y_{\cdot + t^*}) - \A^{\overline{P}}_{\tau (\omega) - t^*} &(Y_{\cdot + t^*}) \neq 
	          \int_{\tau (\omega) - t^*}^{\cdot + \tau (\omega) - t^*} \phi_s ds
                  \Big) 
                  \\&= 
                  Q_\omega \Big( \A^{Q_\omega} (Y_{\cdot + \tau(\omega)}) \neq 
                  \int_{\tau (\omega) - t^*}^{\cdot + \tau (\omega) - t^*} \phi_s ds
                  \Big)
                  = 0.
    \end{align*}
    Since \(Q_\omega (\tau = \tau(\omega)) = 1\) for \(\overline{P}\)-a.a. \(\omega \in \Omega\), 
    we have \(\overline{P}\)-a.s. \(Q(D) = 0\). This shows that \(\A^{\overline{P}}(Y_{\cdot + t^*}) \ll \llambda\) and hence, completes the proof of \(Y \in \fSac (t^*, \overline{P})\).
\end{proof}

\begin{lemma} \label{lem: stab pasting}
	Let $P \in \cA(t^*,\omega^*)$.
	If $Q_\omega \in \cA(\tau(\omega), \omega)$ for $P$-a.e. $\omega \in \Omega$, then
	$\overline{P} \in \cA(t^*,\omega^*)$.
\end{lemma}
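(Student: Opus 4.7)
The plan is to verify the three defining properties of $\cA(t^*,\omega^*)$ one by one. Throughout, we freely use Lemma \ref{lem: identiy up to tau} ($\overline{P} = P$ on $\cF_\tau$ and $\overline{P}$-a.s.\ $\overline{P}(\cdot \mid \cF_\tau) = Q$) and part (ii) of Lemma \ref{lem: collection easy observations} ($Q_\omega(X = \omega \otimes_{\tau(\omega)} X) = 1$ for $P$-a.e.\ $\omega$).

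First, since $\tau \geq t^*$ we have $\cF_{t^*} \subset \cF_\tau$, so $\overline{P}(X = \omega^*\text{ on }[0,t^*]) = P(X = \omega^*\text{ on }[0,t^*]) = 1$ from $P \in \cA(t^*,\omega^*)$. Second, the special semimartingale property $Y^u \in \fSac(t^*, \overline{P})$ for each $u \in U$ is exactly the content of Lemma~\ref{lem_pasting_s}, whose hypotheses are satisfied because $P \in \cA(t^*,\omega^*)$ gives $Y^u \in \fSac(t^*, P)$ and because $Q_\omega \in \cA(\tau(\omega),\omega)$ gives $Y^u \in \fSac(\tau(\omega), Q_\omega)$ for $P$-a.e.\ $\omega$.

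The heart of the argument is the $\Theta$-containment of the density, and I would split $\of 0,\infty \of$ into the two regions $\of 0, \tau - t^* \gs$ and $\gs \tau - t^*, \infty \of$. On the first region, since $\overline{P} = P$ on $\cF_\tau$, the stopped processes $\A^{\overline{P}}(Y^u_{\cdot + t^*})^{\tau - t^*}$ and $\A^P(Y^u_{\cdot + t^*})^{\tau - t^*}$ are indistinguishable (by uniqueness of the canonical decomposition, applied on intervals $\gs 0, (\tau - t^*) \wedge n\gs$), hence their Radon--Nikodym densities with respect to $\llambda$ coincide $(\llambda \otimes \overline{P})$-a.e.\ on this region. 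The $\Theta$-containment there then follows from $P \in \cA(t^*,\omega^*)$, using that $\omega^* \otimes_{t^*} X = X$ holds $\overline{P}$-a.s. On the second region, I would apply Lemma \ref{lem: characteristics under conditioning} to $\overline{P}$ with the conditioning $\overline{P}(\cdot \mid \cF_\tau) = Q$, which gives, for $\overline{P}$-a.a.\ $\omega$, that $Y^u \in \fSac(\tau(\omega), Q_\omega)$ and
\[
\A^{Q_\omega}(Y^u_{\cdot + \tau(\omega)}) = \bigl(\A^{\overline{P}}_{\cdot + \tau(\omega) - t^*}(Y^u_{\cdot + t^*}) - \A^{\overline{P}}_{\tau(\omega) - t^*}(Y^u_{\cdot + t^*})\bigr)(\omega \otimes_{\tau(\omega)} X)
\]
$Q_\omega$-a.s. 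Differentiating in time (using uniqueness of the density of an absolutely continuous measure) gives that $Q_\omega$-a.s., for $\llambda$-a.e.\ $s \geq 0$,
\[
\bigl(d\A^{Q_\omega}(Y^u_{\cdot + \tau(\omega)}) / d\llambda\bigr)(s) = \bigl(d\A^{\overline{P}}(Y^u_{\cdot + t^*}) / d\llambda\bigr)(s + \tau(\omega) - t^*) \circ (\omega \otimes_{\tau(\omega)} \cdot).
\]
Invoking $Q_\omega \in \cA(\tau(\omega),\omega)$ and substituting $t = s + \tau(\omega) - t^*$, we get the required $\Theta$-containment at time $t$ for $(\llambda \otimes Q_\omega)$-a.e.\ path, where on the right-hand side $\omega \otimes_{\tau(\omega)} X = X$ under $Q_\omega$, and $X = \omega^* \otimes_{t^*} X$ holds since $P = \overline{P}$ and $\overline{P}$-a.a.\ conditional kernels $Q_\omega$ are supported on $\{X = \omega \text{ on }[0,\tau(\omega)]\}$ with $\omega = \omega^*$ on $[0,t^*]$. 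Integrating against $P(d\omega)$ and applying Fubini together with Corollary \ref{coro: meas identity} transfers the $(\llambda \otimes Q_\omega)$-a.e.\ statement into $(\llambda \otimes \overline{P})$-a.e., completing the verification.

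The main technical obstacle is the bookkeeping in the second region: keeping track of the shift $s \mapsto s + \tau - t^*$, switching between the concatenations $\omega \otimes_{\tau(\omega)} X$ and $\omega^* \otimes_{t^*} X$ and the canonical process $X$ on the appropriate null sets, and ensuring measurability of the exceptional sets so that Fubini applies cleanly. This is essentially an adaptation of Lemmata~\ref{lem: collection easy observations}(ii) and \ref{lem: identiy up to tau} together with the characteristics transfer recorded in Lemma \ref{lem: characteristics under conditioning}.
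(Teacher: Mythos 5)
Your proposal is correct and follows essentially the same route as the paper: the initial-condition and special-semimartingale properties are dispatched exactly as you do (via Lemma \ref{lem: identiy up to tau} and Lemma \ref{lem_pasting_s}), and the $\Theta$-containment is reduced to the region $\gs \tau - t^*, \infty\of$ using $\overline{P}=P$ on $\cF_\tau$, handled there via Lemmata \ref{lem: characteristics under conditioning} and \ref{lem: identiy up to tau}, and transferred back to $(\llambda\otimes\overline{P})$ via Corollary \ref{coro: meas identity} and Fubini. Your slightly more explicit justification for the region $\of 0,\tau-t^*\gs$ (uniqueness of the canonical decomposition) is a harmless elaboration of what the paper leaves implicit.
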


\begin{proof}
Lemma \ref{lem_pasting_s} implies that \(Y^u \in \fSac  (t^*, \overline{P})\) for all \(u \in U\).  
Recall that \[\overline{P} ( X = \omega^* \text{ on } [0, t^*] ) = P ( X = \omega^* \text{ on } [0, t^*] ) = 1.\]
Consequently, as $\overline{P} = P$ on $\cF_\tau$, it suffices to show that
$$ R \triangleq \big\{ (t,\omega) \in \of\tau - t^*, \infty \of \hspace{0.02cm} \colon (d\A^{\overline{P}} (Y^u_{\cdot + t^*})  / d \llambda)_{u \in U} \hspace{0.05cm} (t, \omega)\notin \Theta(t + t^*,\omega)   \big\} $$
is a $(\llambda \otimes \overline{P})$-null set.
By virtue of Lemmata \ref{lem: characteristics under conditioning} and \ref{lem: identiy up to tau}, the assumption that \(P\)-a.s. \(Q \in \cA (\tau, X)\) yields, for  \(P\)-a.a. \(\omega \in \Omega\), that
\begin{align*}
 (\llambda &\otimes Q_\omega) \big((t,\omega') \in \of\tau (\omega) - t^*,  \infty \of \hspace{0.02cm} \colon (d\A^{\overline{P}} (Y^u_{\cdot + t^*}) / d \llambda)_{u \in U} \hspace{0.05cm} (t, \omega')\notin \Theta(t + t^*,\omega')\big)
 \\&= (\llambda \otimes Q_\omega) \big( (t, \omega') \in \of 0, \infty\of \hspace{0.02cm} \colon (d \A^{\overline{P}}_{\cdot + \tau (\omega) - t^*} (Y^u_{\cdot + t^*}) / d \llambda)_{u \in U} \hspace{0.05cm} (t, \omega')\not \in \Theta (t + \tau (\omega), \omega') \big)
 \\&= (\llambda \otimes Q_\omega) \big( (t, \omega') \in \of 0, \infty\of \hspace{0.02cm} \colon (d\A^{Q_\omega} (Y^u_{\cdot + \tau (\omega)}) / d \llambda)_{u \in U} \hspace{0.05cm} (t, \omega') \not \in \Theta (t + \tau (\omega), \omega') \big) = 0.
\end{align*}
Finally, as \(Q_\omega (\tau = \tau (\omega)) = 1\) for \(P\)-a.a. \(\omega \in \Omega\), Corollary \ref{coro: meas identity} and Fubini's theorem yield that
\begin{align*}
(\llambda \otimes \overline{P}) (R) &= E^{\overline{P}} \Big[ \int_{\tau - t^*}^{\infty} \1 \{ (d\A^{\overline{P}} (Y^u_{\cdot + t^*}) / d \llambda)_{u \in U} (t, X) \notin \Theta(t + t^*, X) \} d t \Big] 
\\&= E^P \Big[ E^Q \Big[ \int_{\tau - t^*}^{\infty} \1 \{ (d\A^{\overline{P}} (Y^u_{\cdot + t^*}) / d \llambda)_{u \in U} (t, X) \notin \Theta(t + t^*, X) \} d t \Big] \Big]
\\&= E^P \Big[ \int_{\tau - t^*}^{\infty} Q\big( (d\A^{\overline{P}} (Y^u_{\cdot + t^*}) / d \llambda)_{u \in U} (t, X) \notin \Theta(t + t^*, X) \big) dt \Big] = 0.
\end{align*}
This completes the proof.
\end{proof}
\subsection{Proof of Theorem \ref{theo: DPP}} \label{sec: pf DPD}
Corollary \ref{coro: MGC} and Lemmata \ref{coro: stab cond} and \ref{lem: stab pasting} yield that the prerequisites of \cite[Theorem 2.1]{ElKa15} are fulfilled and this theorem implies the DPP. \qed


\section{Proof of the Strong Markov Selection Principle: Theorem \ref{theo: strong Markov selection}} \label{sec: pf MSP}
The proof of the strong Markov selection principle is based on some fundamental ideas of Krylov \cite{krylov1973selection} on Markovian selections as worked out in the monograph \cite{stroock2007multidimensional} by Stroock and Varadhan. The main technical steps in the argument are to establish stability under conditioning and pasting for a suitable sequence of correspondences. Hereby, we adapt the proof of \cite[Theorem 2.19]{CN22b} to our more general framework.

This section is split into two parts. In the first, we study some properties of the correspondence \(\cK\) and thereafter, we finalize the proof. 

\subsection{Preparations}
Let us stress that in this section we do not assume Condition~\ref{cond: sammel}. In each of the following results, we indicate precisely which prerequisites are used (except for our Standing Assumptions \ref{SA: meas gr}, \ref{SA: non empty} and \ref{SA: markov}, which are always in force).

For \(t \in \mathbb{R}_+\), we define \(\gamma_t \colon \Omega \to \Omega\) by \(\gamma_t (\omega) \triangleq \omega ( (\, \cdot - t)^+ ) \) for \(\omega \in \Omega\).
Moreover, for \(P \in \mathfrak{P}(\Omega)\) and \(t \in \mathbb{R}_+\), we set \[P^t \triangleq P \circ \gamma_t^{-1}.\]
Recall also the notation \(P_t = P \circ \theta_t^{-1}\).
In case the path space \(\Omega\) is the space of continuous functions from \(\bR_+\) into \(F\), we have the following extension of Lemma \ref{lem: map prod meas}.

\begin{lemma} \label{lem: p^t cont} Suppose that \(\Omega = C(\bR_+; F)\).
	The maps \((t, P) \mapsto P_t\) and \((t, P) \mapsto P^t\) are continuous.
\end{lemma}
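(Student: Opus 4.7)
Both $\mathbb{R}_+$ and $\mathfrak{P}(\Omega)$ are metrizable, so it is enough to show sequential continuity: for any $(t_n, P_n) \to (t, P)$ in $\mathbb{R}_+ \times \mathfrak{P}(\Omega)$, I would check that $(P_n)_{t_n} \to P_t$ and $(P_n)^{t_n} \to P^t$ weakly. The whole argument reduces to the following topological fact: the two maps
\[
\Phi \colon \mathbb{R}_+ \times \Omega \to \Omega, \quad (s,\omega) \mapsto \theta_s(\omega), \qquad \Psi \colon \mathbb{R}_+ \times \Omega \to \Omega, \quad (s,\omega) \mapsto \gamma_s(\omega),
\]
are jointly continuous, where $\Omega = C(\mathbb{R}_+; F)$ carries the local uniform topology.

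\textbf{Joint continuity.} For $\Phi$, given $(s_n, \omega_n) \to (s,\omega)$ and $T > 0$, I would estimate
\[
\sup_{r \leq T} d_F(\omega_n(r + s_n), \omega(r + s)) \leq \sup_{r \leq T} d_F(\omega_n(r+s_n), \omega(r+s_n)) + \sup_{r \leq T} d_F(\omega(r + s_n), \omega(r+s)).
\]
Since $(s_n)$ is bounded, the first term is dominated by $\sup_{u \leq T + \sup_n s_n} d_F(\omega_n(u), \omega(u))$, which vanishes by local uniform convergence. The second term tends to zero by uniform continuity of $\omega$ on a compact interval containing all the $s_n + r$ with $r \leq T$. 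An analogous estimate, with $r+s$ replaced by $(r - s)^+$ (and using that $(r - s_n)^+ \to (r - s)^+$ uniformly in $r$), handles $\Psi$.

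\textbf{Passage to weak convergence.} Since $\Omega$ is Polish, Skorokhod's representation theorem yields $F$-valued processes $X_n \sim P_n$ and $X \sim P$ on a common probability space with $X_n \to X$ almost surely in $\Omega$. By the joint continuity established above, $\theta_{t_n}(X_n) \to \theta_t(X)$ and $\gamma_{t_n}(X_n) \to \gamma_t(X)$ almost surely in $\Omega$. Passing to laws gives $(P_n)_{t_n} = \mathrm{Law}(\theta_{t_n}(X_n)) \to \mathrm{Law}(\theta_t(X)) = P_t$ and $(P_n)^{t_n} \to P^t$ weakly, which is what was claimed.

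\textbf{Expected obstacle.} There is no real obstacle: the only point where care is needed is in the joint continuity estimate, where one has to observe that the shifted time arguments $r + s_n$ (respectively $(r - s_n)^+$) stay in a fixed compact set once $r \leq T$ and $(s_n)$ is bounded, so that one may invoke uniform continuity of the limit path $\omega$ on that compact interval. Everything else is a direct application of the Skorokhod representation and the continuous mapping theorem.
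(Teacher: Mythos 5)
Your proof is correct and follows essentially the same route as the paper: establish joint continuity of $(s,\omega)\mapsto\theta_s(\omega)$ and $(s,\omega)\mapsto\gamma_s(\omega)$ on $\bR_+\times\Omega$, then push this through to weak convergence of the image measures. The only difference is that you supply self-contained arguments (a direct two-term estimate, then Skorokhod representation plus the continuous mapping theorem) where the paper simply cites the Arzel\`a--Ascoli theorem and a result of Bogachev on continuity of image measures.
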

\begin{proof}
	Notice that \((t, \omega) \mapsto \theta_t (\omega)\) and \((t, \omega) \mapsto \gamma_t (\omega)\) are continuous by the Arzel\`a--Ascoli theorem.
	Now, the claim follows from \cite[Theorem 8.10.61]{bogachev}. 
\end{proof}

\begin{lemma} \label{lem: implication c^*}
	For every \((t, \omega) \in \of 0, \infty\of\), \(P \in \cC(t, \omega)\) implies \(P_t \in \cK (0, \omega(t))\).
\end{lemma}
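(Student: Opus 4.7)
The plan is to verify, in turn, each of the three defining properties of $\cK(0, \omega^*(t))$ (writing $\omega^*$ for the fixed path) for the shifted measure $P_t$, given $P \in \cC(t, \omega^*)$. For the initial value, since $P(X = \omega^* \text{ on } [0,t]) = 1$, in particular $P(X_t = \omega^*(t)) = 1$; and $X_0 \circ \theta_t = X_t$, so $P_t(X_0 = \omega^*(t)) = 1$ immediately. For the semimartingale properties I would invoke Lemma~\ref{lem: jacod restatements} with $\phi = \theta_t$, taking $\B^* = (\Omega, \cF, \F_+, P_t)$ and $\B' = (\Omega, \cF, \F'_+, P)$ where $\F'_s = \sigma(X_{r+t}, r \leq s)$. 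The prerequisite $\theta_t^{-1}(\cF_{s+}) = \cF'_{s+}$ follows from Lemma~\ref{lem: filtraation shift} applied to the shifted coordinate process $X_{\cdot + t}$, since $X \circ \theta_t = X_{\cdot + t}$. Jacod's lemma then tells me that $Y^u \circ \theta_t = Y^u_{\cdot + t}$ is a semimartingale under $P$ on $\B'$ iff $Y^u$ is a semimartingale under $P_t$ on $\B^*$, and that the characteristics satisfy $(B^P, C^P, \nu^P) = (B^{P_t} \circ \theta_t, C^{P_t} \circ \theta_t, \nu^{P_t} \circ \theta_t)$.

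Combined with the explicit identity $\A = B + (x - h(x)) * \nu$ from \eqref{eq: characterization AP}, this yields $\A^P(Y^u_{\cdot + t})_s(\omega) = \A^{P_t}(Y^u)_s(\theta_t \omega)$ up to $P$-indistinguishability. In particular, $Y^u \in \fSac(t, P)$ forces $Y^u \in \fSac(P_t)$, and setting $\alpha^u \triangleq d\A^{P_t}(Y^u)/d\llambda$, we get $d\A^P(Y^u_{\cdot + t})/d\llambda = \alpha^u \circ (\mathrm{id} \times \theta_t)$ outside a $(\llambda \otimes P)$-nullset. Since the map $(s, \omega) \mapsto (s, \theta_t\omega)$ pushes $\llambda \otimes P$ forward to $\llambda \otimes P_t$, the density inclusion in the definition of $\cK(0, \omega^*(t))$ will follow from the corresponding inclusion in the definition of $\cC(t, \omega^*)$ once I verify that, up to a $(\llambda \otimes P)$-nullset, the two target sets agree after pull-back by $\theta_t$.

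This is precisely where Standing Assumption~\ref{SA: markov} enters. By the concatenation formula, for $s > 0$,
\[
(\omega^* \otimes_t X)((s+t)-) = \omega^*(t) + X((s+t)-) - X(t),
\]
which equals $X((s+t)-)$ on the $P$-almost sure event $\{X_t = \omega^*(t)\}$, while $(\theta_t X)(s-) = X((s+t)-)$. Thus, $(\llambda \otimes P)$-a.e.,
\[
\Theta(s+t, \omega^* \otimes_t X) = \Theta\bigl((\omega^* \otimes_t X)((s+t)-)\bigr) = \Theta\bigl((\theta_t X)(s-)\bigr) = \Theta(s, \theta_t X),
\]
so the two conditions match and the pushforward step closes the argument. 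The main obstacle is only the bookkeeping: tracking how the characteristics, and in particular the Radon–Nikodym density of the compensator, transform under the shift $\theta_t$ via Jacod's lemma, and then using the Markov assumption together with the almost-sure identity $X_t = \omega^*(t)$ to replace the concatenation $\omega^* \otimes_t X$ by $X$ in the argument of $\Theta$.
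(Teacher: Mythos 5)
Your proposal is correct and follows essentially the same route as the paper: the initial-value check, the transfer of the special-semimartingale property and of the compensator via Lemma~\ref{lem: filtraation shift}, Lemma~\ref{lem: jacod restatements} and \eqref{eq: characterization AP}, the pushforward of $\llambda\otimes P$ to $\llambda\otimes P_t$ under $(s,\omega)\mapsto(s,\theta_t\omega)$, and Standing Assumption~\ref{SA: markov} to match $\Theta(\cdot+t,\omega^*\otimes_t X)$ with $\Theta(\cdot,\theta_t X)$. The only (immaterial) difference is that you rederive the identification $\Theta(\cdot+t,\omega^*\otimes_t X)=\Theta(\cdot+t,X)$ from $X_t=\omega^*(t)$ a.s.\ rather than citing the remark following the definition of $\cA(t,\omega)$.
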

\begin{proof}
	Let \((t, \omega) \in \of 0, \infty\of\), take \(P \in \cC(t, \omega)\) and fix \(u \in U\). It is clear that \( P_t \circ X_0^{-1} = \delta_{\omega(t)} \). Since, by hypothesis, \(Y^u_{\cdot + t} = Y^u \circ \theta_t\), we deduce from Lemma~\ref{lem: jacod restatements} and \cite[Proposition~II.2.29]{JS} that \(Y^u \in \fSac (P_t)\) and \(P\)-a.s.
	\(
\A^P (Y^u_{\cdot + t}) = \A^{P_t} (Y^u) \circ \theta_t .
	\)
	Hence, using also Standing Assumption \ref{SA: markov},
	it follows that
	\begin{align*}
	(\llambda \otimes P_t) ( (\A^{P_t} (Y^u))_{u \in U} \not \in \Theta ) &=  (\llambda \otimes P) ( (\A^{P_t} (Y^u) \circ \theta_t)_{u \in U} \not \in \Theta (\, \cdot\,, X \circ \theta_t) ) \\
	&= (\llambda \otimes P) ( (\A^{P} (Y^u_{\cdot + t}))_{u \in U}  \not \in \Theta (\, \cdot + t, X) ) = 0.
	\end{align*}
	We conclude that \(P_t \in \cK(0,\omega(t)) \).
\end{proof}

\begin{lemma} \label{lem: p^t}
	For every \( (t, x) \in \bR_+ \times F\), we have
	\( P \in \cK(0,x) \) if and only if \( P^t \in \cK(t,x) \).
\end{lemma}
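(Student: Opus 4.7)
The plan is to transfer the conditions defining $\cK(0,x)$ to those defining $\cK(t,x)$ via the map $\gamma_t$, using Lemma \ref{lem: jacod restatements} to transport the semimartingale structure and Standing Assumption \ref{SA: markov} to transport the constraint on $\Theta$. The key computation is that $\theta_t \circ \gamma_t = \on{id}_\Omega$: for any $s \geq 0$ and $\omega \in \Omega$, $(\theta_t \circ \gamma_t)(\omega)(s) = \gamma_t(\omega)(s+t) = \omega(s)$. Consequently $X_{\cdot + t} \circ \gamma_t = X$, and since $Y^u_{\cdot + t} = Y^u \circ \theta_t$ by hypothesis, also $Y^u_{\cdot+t} \circ \gamma_t = Y^u$. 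Likewise, $\gamma_t(\omega)(s) = \omega(0)$ for $s \in [0,t]$, so $P^t(X_s = x \text{ for all } s \in [0,t]) = P(X_0 = x)$, which handles the initial-condition part of both directions.

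Next I would set up Lemma \ref{lem: jacod restatements} with $\Omega' = \Omega^* = \Omega$, $P' = P$, $P^* = P^t = P \circ \gamma_t^{-1}$, $\phi = \gamma_t$, $X^* = Y^u_{\cdot + t}$, and so $X' = X^* \circ \phi = Y^u$. Let $\F'$ be the right-continuous canonical filtration on $\Omega$ and $\F^*$ the right-continuous natural filtration of $X_{\cdot + t}$. Using Lemma \ref{lem: filtraation shift} applied to $Y = X_{\cdot + t}$, one has $\F^*_s = Y^{-1}(\cF_{s+}) = \theta_t^{-1}(\cF_{s+})$, and therefore
\[
\gamma_t^{-1}(\F^*_s) = (\theta_t \circ \gamma_t)^{-1}(\cF_{s+}) = \cF_{s+} = \F'_s,
\]
so the filtration compatibility hypothesis of Lemma \ref{lem: jacod restatements} is satisfied. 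That lemma then yields that $Y^u$ is a $P$-semimartingale iff $Y^u_{\cdot + t}$ is a $P^t$-semimartingale (on the respective right-continuous filtrations), with characteristics related by composition with $\gamma_t$. Since the special semimartingale property can be read off the characteristics (via \cite[Proposition II.2.29]{JS}, as in the proof of Lemma \ref{lem: measurable in zero}), and since the predictable part $\A$ is built from the characteristics by formula \eqref{eq: characterization AP}, we deduce $Y^u \in \fSac(P)$ iff $Y^u_{\cdot + t} \in \fSac(P^t)$ (viewed on the appropriate filtration, so $Y^u \in \fSac(t, P^t)$), and the densities satisfy $(d\A^{P^t}(Y^u_{\cdot+t})/d\llambda) \circ \gamma_t = d\A^P(Y^u)/d\llambda$ as predictable processes, $\llambda \otimes P$-a.e.

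For the constraint on $\Theta$, Standing Assumption \ref{SA: markov} gives $\Theta(r, \omega) \equiv \Theta(\omega(r-))$. For any $s > 0$, $\gamma_t(\omega)((s+t)-) = \omega(s-)$, hence $\Theta(s+t, \gamma_t(\omega)) = \Theta(s, \omega)$ for $\llambda$-a.e. $s$. Combining this with the density identification above and the change-of-variables formula under $\gamma_t$,
\[
(\llambda \otimes P^t)\big( \, (d\A^{P^t}(Y^u_{\cdot+t})/d\llambda)_{u \in U} \notin \Theta(\cdot + t, X) \, \big) = (\llambda \otimes P)\big( \, (d\A^P(Y^u)/d\llambda)_{u \in U} \notin \Theta(\cdot, X) \, \big),
\]
so one side vanishes iff the other does. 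Together with the initial-condition equivalence, this establishes both implications. The main obstacle is the bookkeeping for the filtrations so that Lemma \ref{lem: jacod restatements} applies and the compensator and absolute-continuity property genuinely transfer; once that is in place, the Markov structure of $\Theta$ makes the rest routine.
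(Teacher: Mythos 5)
Your proof is correct and follows essentially the same route as the paper: the identity \(\theta_t \circ \gamma_t = \on{id}\), the filtration identification feeding into Lemma \ref{lem: jacod restatements} to transfer the special semimartingale structure and compensator, and Standing Assumption \ref{SA: markov} with a change of variables under \(\gamma_t\) for the \(\Theta\)-constraint. The only cosmetic difference is that you obtain the converse directly from the ``iff'' nature of these transfer steps, whereas the paper dispatches it via \(P = (P^t)_t\) and Lemma \ref{lem: implication c^*}; both are fine.
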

\begin{proof}
	Take \( P \in \cK(0,x) \). As 
	\(
	\gamma_t^{-1} (\{ X = x \text{ on } [0,t] \} ) = \{ X_0 = x \},
	\)
	we have \(P^t( X = x \text{ on } [0,t]) = 1\). Take \(u \in U\) and notice that \(Y^u = Y^u \circ \theta_t \circ \gamma_t = Y^u_{\cdot + t} \circ \gamma_t\), as \(\theta_t \circ \gamma_t = \on{id}\).
	Hence, by Lemma~\ref{lem: jacod restatements} and \cite[Proposition II.2.29]{JS}, \(Y^u \in \fSac (t, P^t)\) and \(P\)-a.s.
	\(
	\A^P (Y^u) = \A^{P^t} (Y^u_{\cdot + t}) \circ \gamma_t.
	\)
	From the last equality, and Standing Assumption \ref{SA: markov}, we deduce that
	\begin{align*}
	(\llambda \otimes P^t) ( (\A^{P^t} (Y^u_{\cdot + t}))_{u \in U} &\not \in \Theta (\, \cdot + t, X) )
	\\&= (\llambda \otimes P) ( (\A^{P^t} (Y^u_{\cdot + t}))_{u \in U} \circ \gamma_t \not \in \Theta ((X \circ \gamma_t) (\, \cdot + t)) )
	\\&= (\llambda \otimes P)( (\A^{P} (Y^u))_{u \in U}  \not \in \Theta ) 
	\\&= 0.	\end{align*}
This proves that \( P^t \in \cK(t,x) \).
	Conversely, take \(P^t \in \cK(t,x) \). Due to the identity \( \theta_t \circ \gamma_t = \on{id} \), we have
	\( P =  (P^t)_t \) and Lemma \ref{lem: implication c^*} implies that \(P \in \cK(0,x) \). The proof is complete.
\end{proof}

\begin{lemma} \label{lem: k idenity}
	For every \((t, x) \in \bR_+ \times F\), we have \(\cK (t, x) = \{P^t \colon P \in \cK(0, x)\}\).
\end{lemma}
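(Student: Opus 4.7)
The plan is to prove the two inclusions separately, using Lemma \ref{lem: p^t} for one direction and a short explicit computation of \(\gamma_t \circ \theta_t\) for the other. The inclusion \(\{P^t : P \in \cK(0,x)\} \subset \cK(t,x)\) is immediate: given any \(P \in \cK(0,x)\), Lemma \ref{lem: p^t} directly yields \(P^t \in \cK(t,x)\).

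For the reverse inclusion \(\cK(t,x) \subset \{P^t : P \in \cK(0,x)\}\), I would take an arbitrary \(Q \in \cK(t,x)\) and propose the candidate \(P \triangleq Q_t = Q \circ \theta_t^{-1}\). The key is to show that \(P^t = Q\), because then Lemma \ref{lem: p^t} immediately promotes the relation \(P^t = Q \in \cK(t,x)\) to \(P \in \cK(0,x)\), closing the argument.

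To establish \(P^t = Q\), I would compute the composition pointwise. For any \(\omega \in \Omega\) and \(s \geq 0\),
\[
(\gamma_t \circ \theta_t)(\omega)(s) = \theta_t(\omega)((s-t)^+) = \omega\bigl((s-t)^+ + t\bigr) = \begin{cases} \omega(t), & s \leq t, \\ \omega(s), & s \geq t. \end{cases}
\]
Because \(Q \in \cK(t,x)\) satisfies \(Q(X_s = x \text{ for all } s \in [0,t]) = 1\), for \(Q\)-a.a. \(\omega\) we have \(\omega(s) = x = \omega(t)\) for every \(s \in [0,t]\), and hence \((\gamma_t \circ \theta_t)(\omega) = \omega\). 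Consequently, for any \(B \in \cF\),
\[
P^t(B) = Q\bigl((\gamma_t \circ \theta_t)^{-1}(B)\bigr) = Q(B),
\]
so \(P^t = Q\). Combined with Lemma \ref{lem: p^t}, this gives \(P = Q_t \in \cK(0,x)\) and \(Q = P^t\), finishing the proof.

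There is no substantive obstacle here; the proof is essentially bookkeeping. The only point that deserves care is the identification of the composition \(\gamma_t \circ \theta_t\) with the identity on the (\(Q\)-full measure) set of paths that are constantly equal to \(x\) on \([0,t]\), which is exactly the support condition built into the definition of \(\cK(t,x)\).
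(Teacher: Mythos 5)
Your proof is correct and follows essentially the same route as the paper: both directions hinge on Lemma \ref{lem: p^t} together with the identity \(Q = (Q_t)^t\), which holds because \(Q\) is supported on paths constant equal to \(x\) on \([0,t]\). The only cosmetic difference is that you conclude \(Q_t \in \cK(0,x)\) from the converse direction of Lemma \ref{lem: p^t} (whose proof itself rests on Lemma \ref{lem: implication c^*}), whereas the paper cites Lemma \ref{lem: implication c^*} directly, and you spell out the pointwise computation of \(\gamma_t \circ \theta_t\) that the paper leaves implicit.
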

\begin{proof}
	Lemma \ref{lem: p^t} yields the inclusion \(\{P^t \colon P \in \cK(0, x)\} \subset \cK(t, x)\). Conversely, take \(P \in \cK(t, x)\). As 
	\( P( X = x \text{ on } [0,t]) = 1 \), we have
	\(P = (P_t)^t\). Now, since \(P_t \in \cK(0, x)\) by Lemma~\ref{lem: implication c^*}, we get \(\cK(t, x) \subset \{P^t \colon P \in \cK(0, x)\}\). 
\end{proof}

\begin{lemma} \label{lem: lip const lowe semi}
	Suppose that \(\Omega = C (\bR_+; F)\) and that \(\Omega \ni \omega \mapsto L (\omega) \in C(\bR_+; \bR_+)\) is continuous. Let \(P\) be a Borel probability measure on \(\Omega \times C(\bR_+; \bR)\) and set
		\[
	\zeta_M (\omega, \alpha) \triangleq \sup \Big\{ \frac{|\alpha (t \wedge \rho_M (\omega)) - \alpha(s \wedge \rho_M(\omega))|}{t - s} \colon 0 \leq s < t\Big\},
	\]
	for \((\omega, \alpha) \in \Omega \times C(\bR_+; \bR)\), where
	\[
	\rho_M = \inf \{t \geq 0 \colon L_t \geq M\}, \quad M > 0.
	\]
	 Then, there exists a dense set \(D \subset \bR_+\) such that, for every \(M \in D\), there exists a \(P\)-null set \(N = N(M)\) such that \(\zeta_M\) is lower semicontinuous at every \(\omega \not \in N\).
\end{lemma}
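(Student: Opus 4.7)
The key observation is that $\zeta_M$ is a supremum over pairs $0 \leq s < t$ of functions that, for fixed $s < t$, depend continuously on $(\omega,\alpha)$ precisely at points where $\rho_M$ is continuous in $\omega$. Indeed, if $\omega_n \to \omega$ locally uniformly, then $L(\omega_n) \to L(\omega)$ locally uniformly by hypothesis, and if in addition $\rho_M(\omega_n) \to \rho_M(\omega) \in [0,\infty]$ and $\alpha_n \to \alpha$ locally uniformly, then $\alpha_n(t \wedge \rho_M(\omega_n)) \to \alpha(t \wedge \rho_M(\omega))$ by a standard triangle inequality argument. Consequently, at any $(\omega,\alpha)$ with $\rho_M$ continuous at $\omega$, the function $\zeta_M$ is a supremum of continuous functions, hence lower semicontinuous at $(\omega,\alpha)$. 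So the entire task reduces to identifying a dense set $D$ of levels $M$ for which $\rho_M$ is $P_\Omega$-a.s.\ continuous, where $P_\Omega$ denotes the marginal of $P$ on $\Omega$.

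For this, I introduce the strict hitting time $\rho_M^+(\omega) \triangleq \inf(t \geq 0 \colon L_t(\omega) > M)$ and claim that $\rho_M$ is continuous at $\omega$ whenever $\rho_M(\omega) = \rho_M^+(\omega)$ (interpreting both sides as possibly $+\infty$). Lower semicontinuity of $\rho_M$ always holds (by local uniform convergence of $L(\omega_n)$ and closedness of $\{L \geq M\}$), so it remains to verify upper semicontinuity. This follows by distinguishing the cases $\rho_M(\omega) < \infty$ (where strict crossing at $\rho_M(\omega)$ forces $L(\omega_n)$ to exceed $M$ near $\rho_M(\omega)$ for large $n$) and $\rho_M(\omega) = \infty$ (where $\sup L(\omega) \leq M$ forces $\rho_M(\omega_n) \to \infty$ along any locally uniformly convergent sequence $\omega_n \to \omega$).

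The next step is the Fubini/countable-plateau argument. For each fixed $\omega$, the running supremum $\phi_\omega(t) \triangleq \sup_{s \leq t} L_s(\omega)$ is a nondecreasing continuous function, and each $M$ with $\rho_M(\omega) < \rho_M^+(\omega)$ corresponds to a nondegenerate interval on which $\phi_\omega$ equals the constant $M$. Since a nondecreasing function has at most countably many distinct nondegenerate level plateaus (each one contains a rational number and determines its height), the set
\[
B(\omega) \triangleq \bigl\{M > 0 \colon \rho_M(\omega) < \rho_M^+(\omega)\bigr\}
\]
is at most countable, hence Lebesgue-null. By Fubini, $\int_0^\infty P_\Omega(\{\omega \colon M \in B(\omega)\})\,dM = 0$, so the set $D \triangleq \{M > 0 : P_\Omega(B_M) = 0\}$, with $B_M \triangleq \{\omega \colon \rho_M(\omega) < \rho_M^+(\omega)\}$, is the complement of a Lebesgue-null set and in particular dense in $\bR_+$.

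Finally, for each $M \in D$ set $N = N(M) \triangleq B_M \times C(\bR_+; \bR)$; this is Borel in $\Omega \times C(\bR_+;\bR)$ with $P(N) = P_\Omega(B_M) = 0$, and for every $(\omega,\alpha) \notin N$ the map $\rho_M$ is continuous at $\omega$, so $\zeta_M$ is lower semicontinuous at $(\omega,\alpha)$ by the first paragraph. The main subtlety I expect is the countable-plateau argument relating the ``bad'' levels $M$ to plateaus of the running supremum and ensuring measurability of the exceptional set; the rest is a straightforward combination of continuity of evaluation on $C(\bR_+;\bR)$ with local uniform convergence arguments.
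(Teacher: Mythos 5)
Your proposal is correct, and its overall architecture matches the paper's: show $\rho_M$ is always lower semicontinuous, that $\rho_M^+(\omega)\triangleq\inf(t\geq 0\colon L_t(\omega)>M)$ controls it from above, that $\rho_M$ is continuous wherever $\rho_M=\rho_M^+$, and then that $\zeta_M$, being a supremum of functions continuous at such points, is lower semicontinuous there. The genuine difference is in how you produce the dense set $D$ of levels for which $P$-a.s.\ $\rho_M=\rho_M^+$. The paper uses the Stroock--Varadhan device (cf.\ \cite[Lemma 11.1.2]{stroock2007multidimensional}): the function $\phi(M)=E^P[e^{-\rho_M\circ\Psi}]$ is monotone, hence has at most countably many discontinuities, and at each continuity point $M$ the identity $E^P[e^{-\rho_M\circ\Psi}]=E^P[e^{-\rho_M^+\circ\Psi}]$ together with $\rho_M\leq\rho_M^+$ forces a.s.\ equality; this yields a \emph{co-countable} $D$ and needs no measurability of $(M,\omega)\mapsto\rho_M(\omega)$ jointly in $M$. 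Your route is pathwise: for fixed $\omega$ the bad levels are exactly the heights of nondegenerate plateaus of the running supremum of $L(\omega)$, hence countable, and Fubini then gives that $P_\Omega(\rho_M<\rho_M^+)=0$ for Lebesgue-a.e.\ $M$, yielding a \emph{co-Lebesgue-null} (still dense) $D$. Your argument is more elementary and purely deterministic up to one application of Fubini, but it does require the joint measurability of $(M,\omega)\mapsto(\rho_M(\omega),\rho_M^+(\omega))$ that you flag; this is easily supplied via $\{\rho_M\leq t\}=\{\sup_{s\leq t}L_s\geq M\}$ and the analogous identity for $\rho_M^+$, so there is no gap. Both approaches prove the lemma; the paper's gives the slightly stronger conclusion that all but countably many $M$ work.
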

\begin{proof}
	In the first part of this proof, we adapt an argument from \cite[Lemma 11.1.2]{stroock2007multidimensional}.
	The map \(\omega \mapsto \rho_M (\omega)\) is lower semicontinuous. To see this, recall that
	\[
	\{\rho_M \leq t\} = \Big\{ \inf_{s \in \mathbb{Q} \cap [0, t]} \inf_{z \geq M} | L_s - z | = 0 \Big\}, \quad t \in \bR_+,
	\] 
	and notice that the set on the right hand side is closed, as \(\omega \mapsto \inf_{s \in \mathbb{Q} \cap [0, t]} \inf_{z \geq M} | L_s (\omega) - z |\) is continuous. 
	Next, set 
	\[
	\rho^+_M \triangleq \inf \{t \geq 0 \colon L_t > M\}.
	\]
	Since, for every \(t > 0\), 
	\[
	\{\rho^+_M < t\} = \bigcup_{\substack{s < t \\ s \in \mathbb{Q}_+}} \{ L_s > y \},
	\]
	where the set on the right hand side is open by the continuity of \(\omega \mapsto L(\omega)\), we conclude that \(\rho^+_M\) is upper semicontinuous. Notice that \(\rho^+_M = \rho_{M +} \triangleq \lim_{K\searrow M} \rho_K\). Define \(\Psi \colon \Omega \times C (\bR_+; \bR) \to \Omega\) by \(\Psi (\omega, \alpha) = \omega\), and \(\phi \colon \bR_+ \to [0, 1]\) by \(\phi (M) \triangleq E^P [e^{- \rho_M \circ \Psi}]\), and set 
	\begin{align*}
	D \triangleq \Big\{ M \in \bR_+ \colon E^P \big[ e^{- \rho_M \circ \Psi} \big] = E^P \big[ e^{- \rho_{M +} \circ \Psi} \big] \Big\}.
	\end{align*}
	The dominated convergence theorem yields that 
	\[
	D = \big\{ M \in \bR_+ \colon \phi (M) = \phi (M + )\big\}, \qquad \phi (M +) \triangleq \lim_{K \searrow M} \phi (K).
	\]
	The set \(\bR_+ \backslash D\) is countable, as monotone functions, such as \(\phi\), have at most countably many discontinuities. Since \(\rho_M \leq \rho^+_M\), for every \(M \in D\) we have
	\(P\)-a.s. \(\rho_M \circ \Psi = \rho^+_M \circ \Psi\). 
	To see this, notice that 
	\[
	E^P \big[ \big( e^{- \rho_M^+ \circ \Psi} - e^{- \rho_M \circ \Psi} \big) \1_{\{\rho_M \circ \Psi < \rho^+_M \circ \Psi\}} \big] = E^P \big[ e^{- \rho_M^+ \circ \Psi} - e^{- \rho_M \circ \Psi} \big] = 0,
	\]
	which implies that \(P (\rho_M \circ \Psi < \rho^+_M \circ \Psi) = 0\).

	Now, take \(\omega \in \{\rho_M = \rho^+_M\}\). Since \(\rho_M \leq \rho^+_M\), for every sequence \((\omega^n)_{n = 1}^\infty \subset \Omega\) with \(\omega^n \to \omega\), we obtain
	\[
	\rho_M(\omega) \leq \liminf_{n \to \infty} \rho_M(\omega^n) \leq \limsup_{n \to \infty} \rho	_M (\omega^n) \leq \limsup_{n \to \infty} \rho^+_M (\omega^n) \leq \rho^+_M (\omega) = \rho_M(\omega),
	\]
	which implies that \(\rho_M\) is continuous at \(\omega\). 
	
	Finally, let \(\Omega \times C(\bR_+; \bR) \ni (\omega^n, \alpha^n) \to (\omega, \alpha) \in \{\rho_M \circ \Psi = \rho^+_M \circ \Psi\}\). Then, 
	\begin{align*}
	\zeta_M (\omega, \alpha) &= \sup \Big\{ \liminf_{n \to \infty} \frac{| \alpha^n (t \wedge \rho_M(\omega^n)) - \alpha^n (s \wedge \rho_M (\omega^n))|}{t - s} \colon 0 \leq s < t\Big\}
	\\&\leq \liminf_{n \to \infty} \zeta_M (\omega^n, \alpha^n).
	\end{align*}
	The proof is complete.
\end{proof}

\begin{proposition} \label{prop: closedness}
	Assume that \textup{(i) -- (iv)} from Condition \ref{cond: sammel} hold.
Then, for every closed set \(K \subset F\), the set \(\cR(K) \triangleq \bigcup_{x \in K} \cR(x)\) is closed (in \(\mathfrak{P}(\Omega)\) endowed with the weak topology).
\end{proposition}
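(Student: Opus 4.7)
Since $\mathfrak{P}(\Omega)$ is Polish, it suffices to argue sequentially: take $P_n \in \cR(K)$ with $P_n \to P$ weakly and show $P \in \cR(K)$. Because $\omega \mapsto \omega(0)$ is continuous, weak convergence forces $P_n \circ X_0^{-1} = \delta_{x_n} \to P \circ X_0^{-1}$, so $x_n \to x$ in $F$ with $x \in K$ (by closedness of $K$) and $P \circ X_0^{-1} = \delta_x$. Set $K_0 \triangleq \overline{\{x_n : n \in \mathbb{N}\} \cup \{x\}} \subset F$, which is compact. The definitions of $\fSac$ and $\Theta$ involve only local boundedness, so I would localize by $\rho_M$. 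By Lemma \ref{lem: lip const lowe semi} (applied to the joint law of $X$ and a trivial second coordinate), there is a dense set $D \subset \bR_+$ such that $\rho_M$ is $P$-a.s.\ continuous for $M \in D$; only such $M$ will be used, with $M \to \infty$ along $D$ at the end. The moment bound in Condition~\ref{cond: sammel}(iii), applied to the compact set $K_0$, ensures $\rho_M \to \infty$ $P$-a.s.\ (and uniformly enough in $n$ for a Fatou argument).

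For fixed $M \in D$ and $u \in U$, Condition~\ref{cond: sammel}(iii) gives $|a^{P_n, u}| \leq C_{u, M}$ a.e.\ on $\of 0, \rho_M \of$, so the stopped compensator $A^{n, u}_\cdot \triangleq \int_0^{\cdot \wedge \rho_M} a^{P_n, u}_s\, ds$ is $C_{u, M}$-Lipschitz with $A^{n, u}_0 = 0$. Define the lifted measures
\[
\overline{P}_n \triangleq P_n \circ \bigl(X, (A^{n, u})_{u \in U}\bigr)^{-1}
\]
on $\Omega \times \prod_{u \in U} C(\bR_+; \bK)$. Arzel\`a--Ascoli gives tightness of the second-coordinate marginals, hence of $(\overline{P}_n)$; extract a weakly convergent subsequence $\overline{P}_n \to \overline{P}$, whose first marginal is $P$ and whose second-coordinate processes $A^u$ are $C_{u,M}$-Lipschitz, hence absolutely continuous with densities $a^u$. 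Under $P_n$, the stopped process $Y^u_{\cdot \wedge \rho_M} - Y^u_0 - A^{n, u}$ is a bounded martingale (using Condition~\ref{cond: sammel}(i) with $K_0$ to bound $Y^u_{\cdot \wedge \rho_M}$). Testing against $\phi(X_{s_1}, \dots, X_{s_k})$ with $\phi \in C_b(F^k; \bR)$ and $s_1 < \cdots < s_k \leq s < t$, and invoking the continuity $\omega \mapsto Y^u(\omega) \in C(\bR_+; \bK)$ from (i) together with the $P$-a.s.\ continuity of $\rho_M$ for $M \in D$, weak convergence yields the same martingale identity under $\overline{P}$. Hence $Y^u_{\cdot \wedge \rho_M} - Y^u_0 - A^u_{\cdot \wedge \rho_M}$ is a $\overline{P}$-martingale for the right-continuous natural filtration of $X$; letting $M \to \infty$ along $D$ and invoking uniqueness of special semimartingale decompositions identifies the enriched $A^u$ with the intrinsic compensator $\A^P(Y^u)$, giving $Y^u \in \fSac(P)$ with density $a^u = d \A^P(Y^u) / d \llambda$.

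The main obstacle is the density condition $(a^u)_{u \in U} \in \Theta(X)$ $(\llambda \otimes P)$-a.e., which I propose to establish via support functions. Note first that (iv) (applied with constant paths in neighborhoods) implies $\Theta \colon F \twoheadrightarrow \bK^U$ is upper hemicontinuous with compact values. For any finite $U' \subset U$, the projection $\Theta^{U'} \colon F \twoheadrightarrow \bK^{U'}$ inherits convexity (from (ii)) and upper hemicontinuity with compact values, so its support function
\[
h_{U'}(x, y) \triangleq \sup \bigl\{ \langle y, z \rangle : z \in \Theta^{U'}(x) \bigr\}, \quad y \in \bK^{U'},
\]
(viewing $\bK^{U'}$ as a finite-dimensional real vector space) is upper semicontinuous in $x$ and bounded on $K_0$-reachable paths up to $\rho_M$ by (iii). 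The pointwise inclusion $(a^{P_n, u}_r)_{u \in U'} \in \Theta^{U'}(X_r)$ a.e.\ translates, upon integration, into
\[
\bigl\langle y, (A^{n, u}_{t} - A^{n, u}_{s})_{u \in U'} \bigr\rangle \leq \int_{s \wedge \rho_M}^{t \wedge \rho_M} h_{U'}(X_r, y)\, dr \quad P_n\text{-a.s.}
\]
Multiplying by $\phi \in C_b(\Omega; [0, \infty))$ and taking expectations, the LHS converges by weak convergence (the integrand is bounded continuous in $(X, A)$), while Portmanteau for bounded upper semicontinuous functions bounds the $\limsup$ of the RHS from above by the $\overline{P}$-expectation. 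Thus the inequality persists under $\overline{P}$. Varying $\phi$ and then $s < t$ over the rationals, Lebesgue differentiation gives $\langle y, (a^u_r)_{u \in U'} \rangle \leq h_{U'}(X_r, y)$ a.e.; varying $y$ over a countable dense subset of $\bK^{U'}$ and using the Hahn--Banach characterization of closed convex sets by their support functions yields $(a^u_r)_{u \in U'} \in \Theta^{U'}(X_r)$ a.e. Intersecting over a countable exhausting family of finite $U' \subset U$ gives $(a^u_r)_{u \in U} \in \Theta(X_r)$ a.e.\ under $P$, whence $P \in \cR(x) \subset \cR(K)$.
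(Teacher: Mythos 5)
Your overall architecture (lift each \(P_n\) to the joint law of \(X\) and its compensators, get tightness from a Lipschitz/Arzel\`a--Ascoli bound, pass martingale identities to the limit, and recover the density constraint by a limiting argument) parallels the paper's proof, but there is a genuine gap at the identification step. The limit processes \(A^u\) are coordinates on the enriched space \(\Omega \times \prod_{u} C(\bR_+;\bK)\) and are \emph{not} adapted to the (right-continuous) filtration generated by \(X\). Testing the martingale identity only against functionals \(\phi(X_{s_1},\dots,X_{s_k})\) shows that the increments of \(Y^u_{\cdot\wedge\rho_M}-A^u_{\cdot\wedge\rho_M}\) are conditionally centered given \(\cF_s\), but this process is not a martingale \emph{for the filtration of \(X\)} (adaptedness fails), so ``uniqueness of special semimartingale decompositions'' cannot be invoked to conclude \(A^u=\A^P(Y^u)\): the \(\F_+\)-compensator of \(Y^u\) under \(P\) is the \emph{dual predictable projection} of \(A^u\) onto the \(X\)-filtration, not \(A^u\) itself. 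The paper devotes its Step 4 precisely to this point (Stricker's theorem, \cite[Proposition 9.24]{jacod79}, Lemma \ref{lem: jacod restatements}), and then must argue that absolute continuity and the constraint survive the projection, which again uses the convexity and closedness of \(\Theta(X_t)\) via a conditional-expectation argument. Your support-function inequality \(\langle y,(a^u_r)_{u\in U'}\rangle \le h_{U'}(X_r,y)\) would in fact survive conditioning on the \(X\)-filtration (the right-hand side is \(X\)-measurable), so your route is repairable, but as written you prove the constraint for the density of the enriched process \(A^u\) and then assert, incorrectly, that this density equals \(d\A^P(Y^u)/d\llambda\); the projection step is missing entirely.

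Two further points. First, your claim that Condition \ref{cond: sammel} (iv) ``applied with constant paths in neighborhoods'' yields upper hemicontinuity of \(x\mapsto\Theta(x)\) on \(F\) is unjustified: constant paths only give compactness of each \(\Theta(x)\), while the upper semicontinuity of \(x\mapsto h_{U'}(x,y)\) that your Portmanteau step needs is a genuinely spatial statement (any convergent sequence \(x_n\to x\) is realized by constant paths \(\omega_n\equiv x_n\to\omega\equiv x\), about which (iv) says nothing). The paper avoids spatial upper hemicontinuity by Skorokhod coupling and the nested-hull argument \(\bigcap_m \oconv\Theta([t,t+1/m],X^0)\subset\Theta(t,X^0)\) along the limit path only, citing \cite{CN22a,CN22b}. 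Second, your stopped compensators \(A^{n,u}=\int_0^{\cdot\wedge\rho_M}a^{P_n,u}_s\,ds\) depend on \(M\), so each \(M\in D\) produces its own limit law \(\overline{P}^{(M)}\); before ``letting \(M\to\infty\) along \(D\)'' you need a diagonalization and a compatibility/patching argument for the resulting decompositions, whereas the paper sidesteps this by proving tightness of the \emph{unstopped} compensator laws directly, splitting on \(\{\rho_{M_o}\le N\}\) via the moment bound in Condition \ref{cond: sammel} (iii).
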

\begin{proof}
	We adapt the proof strategy from \cite[Proposition 3.8]{CN22b}. 
	Let \((P^n)_{n = 1}^\infty \subset \cR(K)\) be such that \(P^n \to P\) weakly. 
	By definition of \(\cR(K)\), for every \(n \in \mathbb{N}\), there exists a point \(x^n \in K\) such that \(P^n \in \cR(x^n)\). Since \(P^n \to P\) and \(\{\delta_x \colon x \in K\}\) is closed (\cite[Theorem~15.8]{charalambos2013infinite}), there exists a point \(x^0 \in K\) such that \(P \circ X^{-1}_0 = \delta_{x^0}\). In particular, \(x^n \to x^0\). We now prove that \(P \in \cR(x^0)\). Before we start our program, let us fix some auxiliary notation. We set \(\Omega^* \triangleq \Omega \times C (\bR_+; \bR)^U\) and endow this space with the product local uniform topology. In this case, the Borel \(\sigma\)-field \(\mathcal{B}(\Omega^*) \triangleq \cF^*\) coincides with \(\sigma (Z_t, t \geq 0)\), where \(Z = (Z^{(1)}, (Z^{(2, u)})_{u \in U})\) denotes the coordinate process on \(\Omega^*\). Furthermore, we define \(\mathbf{F}^* \triangleq (\cF^*_t)_{t \geq 0}\) to be the right-continuous filtration generated by the coordinate process \(Z\).
	
	\emph{Step 1.} In this first step, we show that the  family \(\{P^n \circ (X, (\A^{P^n}(Y^u))_{u \in U})^{-1} \colon n \in \mathbb{N}\}\) is tight when seen as a family of probability measures on \((\Omega^*, \cF^*)\). As \(\Omega^*\) is endowed with a product topology and since \(P^n \to P\), it suffices to prove that, for every fixed \(u \in U\), the family \(\{P^n \circ \A^{P^n} (Y^u)^{-1} \colon n \in \mathbb{N}\}\) is tight when seen as a family of Borel probability measures on \(C (\bR_+; \bR)\), endowed with the local uniform topology. 
	First, for every \(M > 0\), we deduce tightness of \(\{P^n \circ \A^{P^n}_{\cdot \wedge \rho_M}(Y^u)^{-1} \colon n \in \mathbb{N}\}\) from Kolmogorov's tightness criterion (\cite[Theorem 23.7]{Kallenberg}).
	By the first part from Condition \ref{cond: sammel} (iii), there exists a constant \(\C> 0\) such that, for all~\(s < t\),
	\begin{align*}
	\sup_{n \in \mathbb{N}}E^{P^n} \big[ |\A^{P^n}_{s \wedge \rho_M} (Y^u) - \A^{P^n}_{t \wedge \rho_M} (Y^u)|^2 \big] 
	&\leq \sup_{n \in \mathbb{N}} E^{P^n} \Big[ \Big(\int_{s \wedge \rho_M}^{t \wedge \rho_M} \Big| \frac{d \A^{P^n} (Y^u)}{d \llambda} \Big| d \llambda \Big)^{2} \Big]
\\&\leq \C (t - s)^2.
	\end{align*}
We conclude the tightness of \(\{P^n \circ \A^{P^n}_{\cdot \wedge \rho_M} (Y^u)^{-1} \colon n \in \mathbb{N}\}\), for every \(M > 0\). 
	Using the Arzel\`a--Ascoli tightness criterion given by \cite[Theorem 23.4]{Kallenberg}, we transfer this observation to the global family \(\{P^n \circ \A^{P^n} (Y^u)^{-1} \colon n \in \mathbb{N}\}\). For a moment, fix \(\varepsilon, N > 0\). By virtue of Condition \ref{cond: sammel} (iii), more precisely \eqref{eq: moment bound relax}, there exists an \(M_o > N\) such that
	\begin{align*}
	\sup_{n \in \mathbb{N}} P^n (\rho_{M_o} \leq N) &= \sup_{n \in \mathbb{N}} P^n \Big( \sup_{s \in [0, N]} L_s \geq M_o\Big) 
	\leq \varepsilon / 2.
	\end{align*}
	Thanks to the tightness of the family \(\{P^n \circ \A^{P^n}_{\cdot \wedge \rho_{M_o}}(Y^u)^{-1} \colon n \in \mathbb{N}\}\), there exists a compact set \(K_N \subset \bR\) such that 
	\[
	\sup_{n \in \mathbb{N}} P^n ( \A^{P^n}_{N \wedge \rho_{M_o}} (Y^u) \not \in K_N) \leq \varepsilon / 2.
	\]
	Consequently, we obtain that 
	\begin{align*}
	\sup_{n \in \mathbb{N}} P^n ( \A^{P^n}_N (Y^u) \not \in K_N) &\leq \sup_{n \in \mathbb{N}} P^n ( \A^{P^n}_{N \wedge \rho_{M_o}} (Y^u) \not \in K_N) + \sup_{n \in \mathbb{N}} P^n ( \rho_{M_o} \leq N) 
	\\&\leq \varepsilon /2 + \varepsilon / 2 = \varepsilon.
	\end{align*}
	This observation shows that the family \(\{P^n \circ \A^{P^n}_N(Y^u)^{-1} \colon n \in \mathbb{N}\}\) is tight.
	For \(r > 0, \omega \in C(\bR_+; \bR)\) and \(h > 0\), we define 
	\[
	w_{[0, r]} (\omega, h) \triangleq \sup \big\{ |\omega(s) - \omega(t)| \colon 0 \leq s, t \leq r, |s- t| \leq h\big\}.
	\]
	As \(\{P^n \circ \A^{P^n}_{\cdot \wedge \rho_{M_o}}(Y^u)^{-1} \colon n \in \mathbb{N}\}\) is tight, \cite[Theorem 23.4]{Kallenberg} yields that
	\begin{align*}
	\sup_{n \in \mathbb{N}} E^{P^n} \Big[ w_{[0, N]} &(\A^{P^n} (Y^u), h) \wedge 1 \Big] 
\\&\leq \sup_{n \in \mathbb{N}} E^{P^n} \Big[ w_{[0, N]} (\A^{P^n}_{\cdot \wedge \rho_{M_o}} (Y^u), h) \wedge 1 \Big] + \sup_{n \in \mathbb{N}} P^n (\rho_{M_o} \leq N)
	\\&\leq \sup_{n \in \mathbb{N}} E^{P^n} \Big[ w_{[0, N]} (\A^{P^n}_{\cdot \wedge \rho_{M_o}} (Y^u), h) \wedge 1 \Big] + \varepsilon/2 \to \varepsilon /2 
	\end{align*}
	as \(h \to 0\).
Using  \cite[Theorem 23.4]{Kallenberg} once again, but this time the converse direction, we conclude tightness of \(\{ P^n \circ \A^{P^n} (Y^u)^{-1} \colon n \in \mathbb{N}\}\), which implies those of \(\{P^n \circ (X, (\A^{P^n} (Y^u))_{u \in U})^{-1} \colon n \in \mathbb{N}\}\) (on the respective probability space). 
Up to passing to a subsequence, we can assume that \((P^n \circ (X, (\A^{P^n} (Y^u))_{u \in U})^{-1})_{n = 1}^\infty\) converges weakly (on the space \((\Omega^*, \cF^*)\)) to a probability measure \(Q\). 

	\emph{Step 2.} 
	Recall that \(Z = (Z^{(1)}, (Z^{(2, u)})_{u \in U})\) denotes the coordinate process on \(\Omega^*\) and fix some \(u \in U\).
	Next, we show that \(Z^{(2, u)}\) is \(Q\)-a.s. locally Lipschitz continuous. 
Thanks to Lemma~\ref{lem: lip const lowe semi}, there exists a dense set \(D \subset \mathbb{R}_+\) such that, for every \(M \in D\), the map \(\zeta_M\) is \(Q \circ (Z^{(1)}, Z^{(2, u)})^{-1}\)-a.s. lower semicontinuous. By part (iii) of Condition \ref{cond: sammel}, for every \(M > 0\), there exists a constant \(\C = \C (M) > 0\) such that \(P^n(\zeta_M (X, \A^{P^n} (Y^u)) \leq \C) = 1\) for all \(n \in \mathbb{N}\). Hence, for every \(M \in D\), using the \(Q \circ (Z^{(1)}, Z^{(2, u)})^{-1}\)-a.s. lower semicontinuity of \(\zeta_M\), we deduce from \cite[Example 17, p. 73]{pollard} that 
\[
0 = \liminf_{n \to \infty} P^n (\zeta_M (X, \A^{P^n} (Y^u)) > \C) \geq Q (\zeta_M (Z^{(1)}, Z^{(2, u)}) > \C). 
\]
Further, since \(D\) is dense in \(\mathbb{R}_+\), we can conclude that \(Z^{(2, u)}\) is \(Q\)-a.s. locally Lipschitz continuous and hence, in particular, locally absolutely continuous and of finite variation.
	
	\emph{Step 3.} 
	Define the map \(\Phi \colon \Omega^* \to \Omega\) by \(\Phi (\omega^{(1)}, \omega^{(2)}) \triangleq \omega^{(1)}\) for \(\omega = (\omega^{(1)}, \omega^{(2)}) \in \Omega^*\).
	In this step, we prove that \((\llambda \otimes Q)\)-a.e. \((d Z^{(2, u)}/ d \llambda)_{u \in U} \in \Theta \circ \Phi\). 
	By virtue of \cite[Theorems~II.4.3 and II.6.2]{sion}, \(P^n\)-a.s. for all \(t \in \mathbb{R}_+\), we have 
	\begin{equation}\label{eq: P as inclusion theta}
	\begin{split}
	\big(m (\A^{P^n}_{t + 1/m} (Y^u) - \A^{P^n}_t (Y^u))\big)_{u \in U}  &\in \oconv  \big( (d\A^{P^n} (Y^u) / d \llambda)_{u \in U} ([ t, t + 1/m ])\big) \\&\subset \oconv \Theta ([t, t + 1/m], X).
	\end{split}
	\end{equation}
	Recall that \(\oconv\) denotes the closure of the convex hull.
	By Skorokhod's coupling theorem (\cite[Theorem 5.31]{Kallenberg}), there exist random variables \[(X^0, (B^{0, u})_{u \in U}), (X^1, (B^{1, u})_{u \in U}), (X^2, (B^{2, u})_{u \in U}), \dots\] defined on some probability space \((\Sigma, \mathcal{G}, \mathsf{P})\) such that, for every \(n \in \mathbb{N}\), \((X^n, (B^{n, u})_{u \in U})\) has distribution \(P^n \circ (X, (\A^{P^n} (Y^u))_{u \in U})^{-1}\), \((X^0, (B^{0, u})_{u \in U})\) has distribution \(Q\), and \(\mathsf{P}\)-a.s. \((X^n, (B^{n, u})_{u \in U}) \to (X^0, (B^{0, u})_{u \in U})\). 
	Thanks to the assumption (see (iv) of Condition \ref{cond: sammel}) that the correspondence 
	\[
	\omega \mapsto \oconv \Theta ([t, t + 1/m], \omega)
	\]
	is upper hemicontinuous with compact values, we deduce from \eqref{eq: P as inclusion theta} and \cite[Theorem~17.20]{charalambos2013infinite} that, for every \(m \in \mathbb{N}\), \(\mathsf{P}\)-a.s. for all \(t \in \bR_+\)
	\begin{align} \label{eq: new incl}
	(m(B^{0, u}_{t + 1/m} - B^{0, u}_t))_{u \in U} &= \lim_{n \to \infty} (m(B^{n, u}_{t + 1/m} - B^{n, u}_t))_{u \in U}
	\in \oconv \Theta ([t, t + 1/m], X^0).
	\end{align}
	Notice that \((\llambda \otimes \mathsf{P})\)-a.e.
\begin{align} \label{eq: lebesgue theorem}
	(d B^{0, u} / d \llambda)_{u \in U} = \lim_{m \to \infty}  (m(B^{0 , u}_{\cdot + 1/m} - B^{0, u}_\cdot))_{u \in U}.
\end{align}
	Using \eqref{eq: new incl} and \eqref{eq: lebesgue theorem}, we deduce from (ii) and (iv) of Condition \ref{cond: sammel}, and \cite[Lemma 3.4]{CN22b}, that \(\mathsf{P}\)-a.s. for \(\llambda\)-a.a. \(t \in \mathbb{R}_+\) 
	\[
	(d B^{0, u} / d \llambda)_{u \in U} (t) \in \bigcap_{m \in \mathbb{N}} \oconv \Theta ([t, t + 1/m], X^0) \subset \Theta (t, X^0).
	\]
	This proves that \((\llambda \otimes Q)\)-a.e. \((d Z^{(2, u)} /d \llambda)_{u \in U} \in \Theta \circ \Phi\).
	
	\emph{Step 4.} In the final step of the proof, we show that \(Y^u \in \fSac (P)\) and we relate \(B^{0, u}\) to \(\A^P (Y^u)\). 
Notice that \(Q \circ \Phi^{-1} = P\). For a moment, we fix \(u \in U\).
As in the proof of Lemma \ref{lem: lip const lowe semi}, we obtain the existence of a dense set \(D = D^u \subset \bR_+\) such that \(\rho_M \circ \Phi\) is \(Q\)-a.s. continuous for all \(M \in D\). Take some \(M \in D\). Since \(Y^u \in \fSac (P^n)\), the process \(Y^u_{\cdot \wedge \rho_M} - \A^{P^n}_{\cdot \wedge \rho_M} (Y^u)\) is a \(P^n\)-\(\F_+\)-local martingale. Furthermore, by (i) and (iii) from Condition~\ref{cond: sammel}, we see that \(Y^u_{\cdot \wedge \rho_M} - \A^{P^n}_{\cdot \wedge \rho_M} (Y^u)\) is \(P^n\)-a.s. bounded by a constant independent of \(n\), which, in particular, implies that it is a true \(P^n\)-\(\F_+\)-martingale. Recall from part (i) of Condition~\ref{cond: sammel}, that \(\omega \mapsto Y^u(\omega)\) is continuous. Now, it follows\footnote{\cite[Proposition~IX.1.4]{JS} is stated for \(F = \bR^d\) but the argument needs no change to work for more general state spaces.} from \cite[Proposition~IX.1.4]{JS} that \[Y^{u}_{\cdot \wedge \rho_M} \circ \Phi - Z^{(2, u)}_{\cdot \wedge \rho_M \circ \Phi}\] is a \(Q\)-\(\F^*\)-martingale. Since \(Z^{(2, u)}\) is \(Q\)-a.s. locally absolutely continuous by Step 2, this means that 
\(Y^u \circ \Phi\) is a \(Q\)-\(\F^*\)-semimartingale whose first characteristic is given by~\(Z^{(2, u)}\). 
Next, we relate this observation to the probability measure \(P\) and the filtration~\(\F_+\). For a process \(A\) on \((\Omega^*, \cF^*)\), we denote by \(A^{p, \Phi^{-1}(\F_+)}\) its dual predictable projection to the filtration \(\Phi^{-1}(\F_+)\). Recall from \cite[Lemma~10.42]{jacod79} that, for every \(t \in \bR_+\), a random variable \(V\) on \((\Omega^*, \cF^*)\) is \(\Phi^{-1}(\cF_{t+})\)-measurable if and only if it is \(\cF^*_t\)-measurable and \(V (\omega^{(1)}, \omega^{(2)})\) does not depend on \(\omega^{(2)}\).
Thanks to Stricker's theorem (\cite[Lemma~2.7]{jacod80}), the process \(Y^u \circ \Phi\) is a \(Q\)-\(\Phi^{-1} (\F_+)\)-semimartingale. 
Recall from Step~3 that \((\llambda \otimes Q)\)-a.e. \((d Z^{(2, u)}/ d \llambda)_{u \in U} \in \Theta \circ \Phi\). 
By virtue of (iii) from Condition \ref{cond: sammel}, for every \(M \in D\), we have
\[
E^Q \big[ \on{Var} (Z^{(2, u)})_{\rho_M \circ \Phi} \big] = E^Q \Big[ \int_0^{\rho_M \circ \Phi} \Big| \frac{d Z^{(2, u)}}{d \llambda} \Big| d \llambda \Big] < \infty,
\]
where \(\on{Var} (\, \cdot\,)\) denotes the total variation process.
Hence, we get from \cite[Proposition 9.24]{jacod79} that the first \(Q\)-\(\Phi^{-1}(\F_+)\)-characteristic of \(Y^{u} \circ \Phi\) is given by \((Z^{(2, u)})^{p, \Phi^{-1}(\F_+)}\). 
Lemma \ref{lem: jacod restatements} yields that \(Y^u\) is a \(P\)-\(\F_+\)-semimartingale whose first characteristic \(\A^P (Y^u)\) satisfies 
\[\A^P (Y^u) \circ \Phi = (Z^{(2, u)})^{p, \Phi^{-1}(\F_+)}.\] 
Consequently, we deduce from the Steps~2 and 3 that \(P\)-a.s. \(\A^P (Y^u) \ll \llambda\) and 
\begin{align*}
(\llambda \otimes P) ( (\A^P (Y^u))_{u \in U} \not \in \Theta ) 
&= (\llambda \otimes Q \circ \Phi^{-1}) ( (d \A^P (Y^u) /d \llambda)_{u \in U} \not \in \Theta )
\\&= (\llambda \otimes Q) ( E^Q [ (d Z^{(2, u)} / d \llambda)_{u \in U} | \Phi^{-1} (\F_+)_-] \not \in \Theta \circ \Phi ) = 0,
\end{align*}
where we use (ii) and (iii) from Condition \ref{cond: sammel} and \cite[Theorems II.4.3 and II.6.2]{sion} for the final equality.
This observation completes the proof.
\end{proof}

Given the previous observations, the following result can be proved similar to \cite[Proposition 5.7]{CN22b}. For reader's convenience, we provide the details. 

\begin{proposition} \label{prop: K upper hemi and compact}
	Suppose that Condition \ref{cond: sammel} holds.
	The correpondence \((t, x) \mapsto \cK(t, x)\) is upper hemicontinuous with nonempty and compact values.
\end{proposition}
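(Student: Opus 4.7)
The plan is to reduce everything to the already-established properties of the ``time-zero'' correspondence $x \mapsto \cR(x) = \cK(0,x)$. Two ingredients will drive the argument: the reparametrisation identity $\cK(t, x) = \{P^t \colon P \in \cR(x)\}$ from Lemma \ref{lem: k idenity}, together with the continuity of the shift map $(t, P) \mapsto P^t$ provided by Lemma \ref{lem: p^t cont} (which applies because Condition \ref{cond: sammel}(i) forces $\Omega = C(\bR_+; F)$). For non-emptiness I will simply note that $\cR(x) = \cC(0, \omega)$ for any $\omega$ with $\omega(0) = x$ (e.g.\ $\omega \equiv x$), so Standing Assumption \ref{SA: non empty} supplies some $P \in \cR(x)$, and Lemma \ref{lem: p^t} hands us $P^t \in \cK(t, x)$.

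For the upper hemicontinuity-plus-compact-values conclusion, I plan to verify the sequential criterion in the metric space $\mathfrak{P}(\Omega)$: given $(t_n, x_n) \to (t, x)$ and $P_n \in \cK(t_n, x_n)$, I will produce a subsequence of $(P_n)$ converging to some element of $\cK(t, x)$. First, Lemma \ref{lem: k idenity} lets me write $P_n = Q_n^{t_n}$ with $Q_n \in \cR(x_n)$. Setting $K \triangleq \{x_n \colon n \in \mathbb{N}\} \cup \{x\}$, which is compact in $F$, Condition \ref{cond: sammel}(v) renders $\cR(K) = \bigcup_{y \in K} \cR(y)$ relatively compact in $\mathfrak{P}(\Omega)$, while Proposition \ref{prop: closedness} makes it closed; thus along a subsequence $Q_{n_k} \to Q \in \cR(K)$. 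The continuous mapping theorem applied to the (continuous) evaluation $\omega \mapsto \omega(0)$ gives $Q \circ X_0^{-1} = \lim_k \delta_{x_{n_k}} = \delta_x$, which pins $Q$ down to $\cR(x)$. Then Lemma \ref{lem: p^t cont} gives $P_{n_k} = Q_{n_k}^{t_{n_k}} \to Q^t$, and Lemma \ref{lem: p^t} identifies the limit as a member of $\cK(t, x)$. Specialising to the constant sequences $(t_n, x_n) \equiv (t, x)$ also yields the sequential compactness of $\cK(t, x)$, hence its compactness in the metrisable space $\mathfrak{P}(\Omega)$.

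The main technical obstacle is to ensure that the weak subsequential limit $Q$ actually lies in $\cR(x)$ rather than merely in the larger set $\cR(K)$; this is exactly where the heavy lifting done in Proposition \ref{prop: closedness} (which encodes preservation of the absolutely continuous semimartingale structure under weak limits) is needed, and the continuous mapping theorem applied to $X_0$ then pins the starting point. Once this is in place, Lemmata \ref{lem: p^t} and \ref{lem: p^t cont} transport the conclusion back from $\cR$ to $\cK$ with no further work.
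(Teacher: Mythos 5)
Your proof is correct and follows essentially the same route as the paper's: reduce to the time-zero correspondence via Lemma \ref{lem: k idenity} and the continuity of $(t,P)\mapsto P^t$ (Lemma \ref{lem: p^t cont}), invoke Proposition \ref{prop: closedness} together with Condition \ref{cond: sammel}(v) for compactness, and pin down the limit's starting point through its $X_0$-marginal. The only cosmetic difference is that you verify upper hemicontinuity through the sequential characterization for compact-valued correspondences, whereas the paper checks closedness of the lower inverse of closed sets; both are equivalent here and rest on the same ingredients.
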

\begin{proof}
	The correspondence \( x \mapsto \cK(0, x) \) has nonempty values by Standing Assumption~\ref{SA: non empty}. Further, it has compact values by Proposition \ref{prop: closedness} and part (v) of Condition \ref{cond: sammel}. Thus, Lemmata~\ref{lem: p^t cont} and \ref{lem: k idenity} yield that the same is true for \((t, x) \mapsto \cK (t, x)\).
	
	It remains to show that \(\cK\) is upper hemicontinuous. 
	Let \( G \subset \mathfrak{P}(\Omega) \) be closed. We need to show that
	\( \cK^l(G) = \{ (t, x) \in \bR_+ \times F \colon \cK(t, x) \cap G \neq \emptyset \} \) is closed.
	Suppose that the sequence \( (t^n, x^n)_{n = 1}^\infty \subset \cK^l(G) \) converges to \((t^0, x^0) \in \bR_+ \times F\).
	For each \(n \in \mathbb{N} \), there exists a probability measure \(P^n \in \cK (t^n, x^n) \cap G \).
	By part (v) of Condition \ref{cond: sammel} and Proposition~\ref{prop: closedness}, the set \(\cR^\circ \triangleq \bigcup_{n = 0}^\infty \cR(x^n)\) is compact.
	Hence, by Lemma \ref{lem: p^t cont}, so is the set 
	\[
	\cK^\circ \triangleq \{ P^t \colon (t, P) \in \{t^n \colon n \in \mathbb{Z}_+\} \times \cR^\circ \}.
	\]
	By virtue of Lemma \ref{lem: k idenity}, we conclude that \(\{P^n \colon n \in \mathbb{N}\} \subset \cK^\circ\) is relatively compact. 
	Hence, passing to a subsequence if necessary, we can assume that \( P^n \to P \) weakly for some 
	\( P \in \cK^\circ \cap G\).
	Let \(d_F\) be a metric on \(F\) which induces its topology.
	 For every \(\varepsilon \in (0, t^0)\), the set \(\{d_F (X_s, x^0) \leq \varepsilon \text{ for all } s \in [0, t^0 - \varepsilon]\} \subset \Omega\) is closed. Consequently, by the Portmanteau theorem, for every \(\varepsilon \in (0, t^0)\), we get
	\begin{align*}
	1 &= \limsup_{n \to \infty} P^n( d_F (X_s, x^0) \leq \varepsilon \text{ for all } s \in [0, t^0 - \varepsilon] ) 
	\\&\leq P(d_F (X_s, x^0) \leq \varepsilon \text{ for all } s \in [0, t^0 - \varepsilon] ).
	\end{align*}
	It follows that \(P( X = x^0 \text{ on } [0, t^0] ) = 1\), which implies that \(P = (P_{t^0})^{t^0}\).
	By Lemmata~\ref{lem: p^t cont} and \ref{lem: implication c^*}, we have \((P^n)_{t^n} \in \cK(0, x^n)\) and \((P^n)_{t^n} \to P_{t^0}\) weakly. Further, since \(P_{t^0} \circ X_0^{-1} = \delta_{x^0}\), Proposition \ref{prop: closedness} yields that \(P_{t^0} \in \cK(0, x^0)\). 
	Thus, by virtue of Lemma~\ref{lem: p^t}, \(P \in \cK^\circ \cap G \cap \cK(t^0, x^0) = \cK(t^0, x^0) \cap G,\) which implies
	\((t^0, x^0) \in \cK^l(G) \). We conclude that \(\cK\) is upper hemicontinuous.
\end{proof}

\begin{lemma} \label{lem: r^* compact}
Suppose that part \textup{(ii)} of Condition \ref{cond: sammel} holds.
	The correspondence \( (t, x) \mapsto \cK(t, x) \) has convex values.
\end{lemma}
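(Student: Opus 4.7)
The plan is to verify that $P \triangleq \alpha P_1 + (1-\alpha) P_2 \in \cK(t, x)$ whenever $P_1, P_2 \in \cK(t, x)$ and $\alpha \in (0, 1)$. The initial condition $P(X_s = x \text{ for all } s \in [0, t]) = 1$ is immediate by linearity, so the main task is to identify the canonical decomposition of each $Y^u$ under $P$ and to check that the density of its compensator lies in $\Theta$.

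I would write $Z_i \triangleq dP_i / dP$ (well-defined since $P_i \ll P$) and let $\hat{Z}_i$ denote the c\`adl\`ag $(\mathbf{F}_+, P)$-martingale with terminal value $Z_i$. From $\alpha Z_1 + (1-\alpha) Z_2 = 1$ $P$-a.s.\ one gets $\alpha \hat{Z}_{1} + (1-\alpha) \hat{Z}_{2} = 1$, hence $\alpha \hat{Z}_{1-} + (1-\alpha) \hat{Z}_{2-} = 1$ $P$-a.s. The key identity
\[
E^P\Big[ Z_i \int_0^T H \, d \A^{P_i}(Y^u_{\cdot + t}) \Big] = E^P\Big[ \int_0^T H \hat{Z}_{i-} \, d \A^{P_i}(Y^u_{\cdot + t}) \Big],
\]
valid for bounded predictable $H$ since $\A^{P_i}(Y^u_{\cdot + t})$ is predictable and of locally finite variation, combined with $E^P = \alpha E^{P_1} + (1-\alpha) E^{P_2}$ and the definition of the compensator under each $P_i$, would show (after standard localization) that $Y^u \in \fSac(t, P)$ with
\[
\A^P(Y^u_{\cdot + t}) = \alpha \int_0^\cdot \hat{Z}_{1-} \, d \A^{P_1}(Y^u_{\cdot + t}) + (1-\alpha) \int_0^\cdot \hat{Z}_{2-} \, d \A^{P_2}(Y^u_{\cdot + t}),
\]
which is absolutely continuous with density $\alpha \hat{Z}_{1-} \cdot (d\A^{P_1}(Y^u_{\cdot+t})/d\llambda) + (1-\alpha) \hat{Z}_{2-} \cdot (d\A^{P_2}(Y^u_{\cdot+t})/d\llambda)$.

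The last step is to verify that this density lies in $\Theta(\cdot+t, X)$ $(\llambda \otimes P)$-a.e. A similar predictable projection argument shows that $\hat{Z}_{i-} \cdot (\llambda \otimes P)$ and $\llambda \otimes P_i$ agree on the predictable $\sigma$-field $\mathscr{P}$ on $\bR_+ \times \Omega$. Applied to the predictable $(\llambda \otimes P_i)$-null set $\{(d\A^{P_i}(Y^u_{\cdot+t})/d\llambda)_{u \in U} \notin \Theta(\cdot+t, X)\}$ (predictable thanks to Lemma~\ref{lem: measurable in zero}), this yields $(d\A^{P_i}(Y^u_{\cdot+t})/d\llambda)_{u \in U} \in \Theta(\cdot+t, X)$ $(\llambda \otimes P)$-a.e.\ on $\{\hat{Z}_{i-} > 0\}$. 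Together with $\alpha \hat{Z}_{1-} + (1-\alpha) \hat{Z}_{2-} = 1$, one now argues pointwise: on $\{\hat{Z}_{1-} > 0\} \cap \{\hat{Z}_{2-} > 0\}$ the $P$-density is a genuine convex combination of elements of $\Theta(\cdot+t, X)$, hence lies in $\Theta(\cdot+t, X)$ by part (ii) of Condition~\ref{cond: sammel}; if $\hat{Z}_{i-}$ vanishes at some $(s, \omega)$, the constraint forces the other weighted density to equal $d\A^{P_j}/d\llambda$ outright, with $j \neq i$ and $\hat{Z}_{j-}(s, \omega) > 0$, so again the density lies in $\Theta(\cdot+t, X)$.

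The main obstacle is the careful localization required to legitimately identify $Y^u$ as a special $P$-semimartingale with the compensator above rather than merely produce a formal candidate. One should localize by stopping times under which both density martingales $\hat{Z}_i$ are uniformly bounded and $\A^{P_i}(Y^u_{\cdot + t})$ has integrable total variation on compact time intervals, so that the integration-by-parts identity above holds without integrability issues and $Y^u - Y^u_t - \A^P$ may be verified to be a local $P$-martingale by testing against bounded predictable processes. The predictable version of $d\A^{P_i}(Y^u_{\cdot+t})/d\llambda$ provided by Lemma~\ref{lem: measurable in zero} is essential for both the predictable projection step and the final pointwise convexity argument.
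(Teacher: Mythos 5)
Your proposal is correct and follows essentially the same route as the paper: form the convex combination, control the density processes (with $\alpha Z_1+(1-\alpha)Z_2=1$), express $\A^P(Y^u_{\cdot+t})$ as the correspondingly weighted combination of the $\A^{P_i}(Y^u_{\cdot+t})$, transfer the $(\llambda\otimes P_i)$-null sets via the densities, and conclude by convexity of $\Theta$ with the same case split on where the densities vanish. The only differences are cosmetic: the paper first reduces to $t=0$ via Lemma~\ref{lem: k idenity} and outsources the semimartingale decomposition under the convex combination to \cite[Lemma~III.3.38, Theorem~III.3.40]{JS}, whereas you re-derive that formula by hand through the predictable projection identity.
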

\begin{proof}
	By virtue of Lemma \ref{lem: k idenity}, it suffices to prove that \(\cK (0, x)\) is convex for every \(x \in F\).
	Indeed, for every \(P, Q \in \cK(t, x)\) and \(\alpha \in (0, 1)\), there are probability measures \(\p, \q \in \cK(0, x)\) such that \(\p^t = P\) and \(\q^t = Q\). Then, \(\alpha P + (1 - \alpha) Q = (\alpha \p + (1 - \alpha) \q)^t\) and consequently, from Lemma \ref{lem: p^t}, we get \(\alpha P + (1 - \alpha) Q \in \cK(t, x)\) once \(\alpha \p + (1 - \alpha) \q \in \cK(0, x)\).
	As \(\Theta\) is assumed to be convex-valued by Condition~\ref{cond: sammel} (ii), the convexity of \(\cK(0, x)\) follows from \cite[Lemma~III.3.38, Theorem~III.3.40]{JS}.
	To make the argument precise, take \(\p, \q \in \mathcal{K}(0, x)\) and \(\alpha \in (0, 1)\), and set \(\r \triangleq \alpha \p + (1 - \alpha)\q\). Clearly, \(\r \circ X^{-1}_0 = \delta_x\). Moreover, we have \(\p \ll \r\) and \(\q \ll \r\) and, by \cite[Lemma~III.3.38]{JS}, there are versions \(Z^\p\) and \(Z^\q\) of the Radon--Nikodym densities \(d \p/d \r\) and \(d \q/ d \r\), respectively, such that 
	\[
	\alpha Z^\p + (1 - \alpha) Z^\q = 1, \qquad 0 \leq Z^\p \leq 1 /\alpha, \qquad 0 \leq Z^\q \leq 1 / (1 - \alpha).
	\]
	For every \(u \in U\), \cite[Proposition II.2.29, Theorem~III.3.40]{JS} yields that \(Y^u \in \fSac (\r)\) and that \((\llambda \otimes \r)\)-a.e.
	\[
	d \A^\r (Y^u) /d \llambda = \alpha Z^\p d \A^\p (Y^u) / d\llambda + (1 - \alpha) Z^\q d \A^\q (Y^u) / d \llambda.
	\]
	Notice that 
	\begin{align*}
	\iint Z^\p \1 \{ (d\A^\p (Y^u) &/ d \llambda)_{u \in U} \not \in \Theta, Z^\p > 0\} d (\llambda \otimes \r) 
	\\&= (\llambda \otimes \p) ( (d \A^\p (Y^u) / d \llambda)_{u \in U} \not \in \Theta, Z^\p > 0) = 0.
	\end{align*}
	Consequently, \((\llambda \otimes \r)\)-a.e. \(\1 \{ (d \A^\p (Y^u) / d \llambda)_{u \in U} \not\in \Theta, Z^\p > 0\} = 0\). In the same manner, we obtain that \((\llambda \otimes \r)\)-a.e. \(\1 \{ (d \A^\q (Y^u) / d \llambda)_{u \in U} \not\in \Theta, Z^\q > 0\} = 0\).
	Finally, as \(\alpha Z^\p + (1 - \alpha) Z^\q = 1\), using the convexity of \(\Theta\), we get that 
	\begin{align*}
	(\llambda  \otimes \r) ( &( d \A^\r (Y^u) / d \llambda)_{u \in U} \not \in \Theta ) 
	\\&= (\llambda \otimes \r) ( (d\A^\p (Y^u) / d\llambda)_{u \in U} \not\in \Theta, Z^\p > 0, Z^\q = 0)
	\\&\qquad \quad + (\llambda \otimes \r) ( (d \A^\q (Y^u) / d \llambda)_{u \in U} \not\in \Theta, Z^\p = 0, Z^\q > 0 )
	\\&\qquad \quad + (\llambda \otimes \r) ( (d \A^\r (Y^u) / d \llambda)_{u \in U} \not\in \Theta, Z^\p > 0, Z^\q > 0 )
	\\&= (\llambda \otimes \r) ( (\alpha Z^\p ( d \A^\p (Y^u) / d \llambda)_{u \in U} + (1 - \alpha) Z^\q ( d \A^\q (Y^u) / d \llambda )_{u \in U} ) \not\in \Theta, 
	\\&\hspace{1.8cm} ( d \A^\p (Y^u) / d \llambda)_{u \in U} \in \Theta, ( d \A^\q (Y^u) / d \llambda)_{u \in U} \in \Theta, Z^\p > 0, Z^\q > 0 )
	\\&= 0.
	\end{align*}
	We conclude that \(\r \in \mathcal{K}(0, x)\). The proof is complete.
\end{proof}

\begin{lemma} \label{lem: iwie Markov}
	Let \(Q \in \mathfrak{P}(\Omega)\) and take \(t \in \bR_+, \omega, \alpha \in \Omega\) such that \(\omega (t) = \alpha (t)\).    Then, 
	\[
	\delta_\alpha \otimes_t Q \in \cC(t, \alpha) \quad \Longleftrightarrow \quad \delta_\omega \otimes_t Q \in \cC (t, \omega).
	\]
\end{lemma}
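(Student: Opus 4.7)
The plan is to reduce the membership conditions of $\cC(t,\alpha)$ and $\cC(t,\omega)$ to statements about the shifted measures $(P^\alpha)_t = P^\alpha \circ \theta_t^{-1}$ and $(P^\omega)_t = P^\omega \circ \theta_t^{-1}$, where $P^\alpha \triangleq \delta_\alpha \otimes_t Q$ and $P^\omega \triangleq \delta_\omega \otimes_t Q$, and then to exploit the key identity $(P^\alpha)_t = (P^\omega)_t$, which holds precisely because $\alpha(t) = \omega(t)$. By symmetry, only the forward implication needs to be addressed; assume therefore $P^\alpha \in \cC(t, \alpha)$. The support condition $P^\omega(X = \omega \text{ on } [0,t]) = 1$ is immediate from the definition of $\otimes_t$, since $\omega \otimes_t \omega'$ coincides with $\omega$ on $[0,t]$ for every $\omega'$. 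The first step I would carry out is to check $(P^\alpha)_t = (P^\omega)_t$: directly from the definition of the concatenation, $(\alpha \otimes_t \omega')(s+t) = \alpha(t) + \omega'(s+t) - \omega'(t)$ for every $s \geq 0$, so $(P^\alpha)_t$ is the push-forward of $Q$ under the map $\omega' \mapsto \alpha(t) + \omega'_{\cdot + t} - \omega'(t)$; the analogous formula holds for $(P^\omega)_t$, and the two push-forwards coincide because $\alpha(t) = \omega(t)$.

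The second step is to transfer the special semimartingale structure. Since $Y^u \circ \theta_t = Y^u_{\cdot + t}$ by hypothesis, Lemmata \ref{lem: filtraation shift} and \ref{lem: jacod restatements} (applied with $\phi = \theta_t$, exactly as in the proof of Lemma \ref{lem: implication c^*}) yield, for every $P \in \mathfrak{P}(\Omega)$, the equivalence $Y^u \in \fSac(t, P) \Leftrightarrow Y^u \in \fSac(P_t)$ together with the $P$-a.s.\ identity
\[
\A^P(Y^u_{\cdot + t}) = \A^{P_t}(Y^u) \circ \theta_t.
\]
Applied to both $P^\alpha$ and $P^\omega$ and combined with $(P^\alpha)_t = (P^\omega)_t$, this already gives $Y^u \in \fSac(t, P^\omega)$ for every $u \in U$, and the two density families $(d\A^{P^\alpha}(Y^u_{\cdot + t})/d\llambda)_{u \in U}$ and $(d\A^{P^\omega}(Y^u_{\cdot + t})/d\llambda)_{u \in U}$ pull back, via composition with $\theta_t$, to the same function of $(s, \widetilde\omega)$ under the common shifted law $(P^\alpha)_t = (P^\omega)_t$.

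Finally, I would verify the $\Theta$-inclusion for $P^\omega$ by invoking Standing Assumption \ref{SA: markov}, which reduces $\Theta(s + t, \omega \otimes_t \omega')$ to a function of the left limit $(\omega \otimes_t \omega')((s + t)-)$. For $s > 0$, this equals $\omega(t) + \omega'((s+t)-) - \omega'(t)$, and on the $P^\omega$-a.s.\ event $\{\omega'(t) = \omega(t)\}$ it simplifies further to $\omega'((s+t)-) = \theta_t(\omega')(s-)$, with the analogous simplification under $P^\alpha$. Substituting this together with the density identity from the previous step and changing variables by $\theta_t$, both $\Theta$-inclusion conditions become the single statement
\[
(d\A^{(P^\alpha)_t}(Y^u)/d\llambda)_{u \in U}(s, \widetilde\omega) \in \Theta(\widetilde\omega(s-)) \quad (\llambda \otimes (P^\alpha)_t)\text{-a.e.,}
\]
so $P^\alpha \in \cC(t,\alpha)$ forces $P^\omega \in \cC(t,\omega)$. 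The only genuine obstacle is bookkeeping: one must carefully track how the concatenation $\otimes_t$, the shift $\theta_t$, and the left limit $\omega(t-)$ appearing in Standing Assumption \ref{SA: markov} interlock. Once that assumption is used to collapse $\Theta(\cdot + t, \omega \otimes_t X)$ into a functional depending only on $X_{\cdot + t}$, the rest is an essentially mechanical consequence of the identity $(P^\alpha)_t = (P^\omega)_t$.
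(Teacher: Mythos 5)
Your proposal is correct and follows essentially the same route as the paper: the paper also hinges on the identity \((\delta_\alpha \otimes_t Q)_t = (\delta_\omega \otimes_t Q)_t\) (a consequence of \(\alpha(t)=\omega(t)\)), transfers the special semimartingale structure and compensators through \(\theta_t\) via Lemmata \ref{lem: filtraation shift} and \ref{lem: jacod restatements}, and uses Standing Assumption \ref{SA: markov} to collapse the \(\Theta\)-condition to a statement about the common shifted law (the forward reduction being packaged there as Lemma \ref{lem: implication c^*}). The only differences are cosmetic: you verify the key identity and the support condition explicitly and treat both directions symmetrically, which the paper leaves implicit.
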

\begin{proof}
	Set \(\q \triangleq \delta_\alpha \otimes_t Q\) and \(\p \triangleq \delta_\omega \otimes_t Q\).
	Suppose that \( \q \in \cC(t,\alpha) \).
	Thanks to Lemma~\ref{lem: implication c^*}, we have \(\q_t \in \cC(0, \alpha (t)) = \cC(0, \omega (t))\). Since \(\q_t = \p_t\), we also have \(\p_t \in \cC(0, \omega (t))\). We deduce from Lemma \ref{lem: jacod restatements} that \(Y^u \in \fSac (t, \p)\) and that \(\p\)-a.s. \(\A^{\p} (Y^u_{\cdot + t}) = \A^{\p_t} (Y^u) \circ \theta_t\). 	Hence, using Standing Assumption \ref{SA: markov}, we get that
	\begin{align*}
	(\llambda \otimes \p) ( (\A^{\p} (Y^u_{\cdot + t}))_{u \in U} \not \in \Theta (\, \cdot + t, X) ) 
	&= (\llambda \otimes \p) ( (\A^{\p_t} (Y^u) \circ \theta_t)_{u \in U} \not \in \Theta (\, \cdot\,, X \circ \theta_t) )
	\\&= (\llambda \otimes \p_t) ( (\A^{\p_t} (Y^u))_{u \in U} \not \in \Theta  ) = 0.
	\end{align*}
	In summary, \( \p \in \cC(t, \omega) \). 
	The converse implication follows by symmetry.
\end{proof}

\begin{definition}
	A correspondence \(\cU \colon \bR_+ \times F \twoheadrightarrow \mathfrak{P}(\Omega)\) is said to be 
	\begin{enumerate}
		\item[\textup{(i)}]
		\emph{stable under conditioning} if for any \((t, x) \in \bR_+ \times F\), any stopping time \(\tau\) with \(t \leq \tau < \infty\), and any \(P \in \cU(t, x)\), there exists a \(P\)-null set \(N \in \cF_\tau\) such that 
		\(\delta_{\omega (\tau(\omega))} \otimes_{\tau (\omega)} P (\, \cdot\, | \mathcal{F}_\tau) (\omega) \in \cU(\tau (\omega), \omega (\tau (\omega)))\) for all \(\omega \not \in N\);
		\item[\textup{(ii)}]
		\emph{stable under pasting} if for any \((t, x) \in \bR_+ \times F\), any stopping time \(\tau\) with \(t \leq \tau < \infty\), any \(P \in \cU(t, x)\) and any \(\mathcal{F}_\tau\)-measurable map \(\Omega \ni \omega \mapsto Q_\omega \in \mathfrak{P}(\Omega)\) the following implication holds:
		\[
		\qquad \quad P\text{-a.a. } \omega \in \Omega \ \ \delta_{\omega (\tau(\omega))} \otimes_{\tau (\omega)} Q_\omega \in \cU (\tau (\omega), \omega (\tau(\omega)))\ \Longrightarrow \ P \otimes_\tau Q \in \cU(t, x).
		\]
	\end{enumerate}
\end{definition}

\begin{lemma} \label{lem: K stable under both}
	The correspondence \(\cK\) is stable under conditioning and pasting.
\end{lemma}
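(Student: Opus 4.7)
The plan is to reduce both stability assertions for \(\cK\) to the analogous properties of \(\cC\) established in Lemmata~\ref{coro: stab cond} and~\ref{lem: stab pasting}, via Lemma~\ref{lem: iwie Markov}. The bridge I will use is the observation that, under Standing Assumption~\ref{SA: markov}, the set \(\cK(t,x)\) coincides with \(\cC(t, \omega^x)\) for \emph{any} path \(\omega^x \in \Omega\) that is constantly equal to \(x\) on \([0,t]\): the initial condition and the semimartingale requirements match verbatim, and the two forms of the uncertainty constraint (with \(\Theta(\cdot + t, X)\) vs.\ \(\Theta(\cdot + t, \omega^x \otimes_t X)\)) agree thanks to the equivalent formulation of \(\cC\) recorded in the remark after Standing Assumption~\ref{SA: non empty}.

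For \emph{stability under conditioning}, I would fix \(P \in \cK(t,x)\) and a stopping time \(\tau\) with \(t \leq \tau < \infty\). Viewing \(P \in \cC(t, \omega^x)\), Lemma~\ref{coro: stab cond} produces a \(P\)-null set \(N \in \cF_\tau\) such that \(P(\cdot \mid \cF_\tau)(\omega) \in \cC(\tau(\omega), \omega)\) for every \(\omega \notin N\). Using standard properties of regular conditional probabilities at stopping times (in the spirit of part (ii) of Lemma~\ref{lem: collection easy observations}) one enlarges \(N\) so that \(P(\cdot \mid \cF_\tau)(\omega)\)-a.s.\ \(X = \omega\) on \([0, \tau(\omega)]\); a direct manipulation of the concatenation formula then yields \(P(\cdot \mid \cF_\tau)(\omega) = \delta_\omega \otimes_{\tau(\omega)} P(\cdot \mid \cF_\tau)(\omega)\). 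Denoting by \(\overline{\omega}\) the constant path at \(\omega(\tau(\omega))\), so that \(\omega(\tau(\omega)) = \overline{\omega}(\tau(\omega))\), Lemma~\ref{lem: iwie Markov} transfers this membership to \(\cC(\tau(\omega), \overline{\omega})\), which by the initial reduction equals \(\cK(\tau(\omega), \omega(\tau(\omega)))\).

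For \emph{stability under pasting}, I would fix \(P \in \cK(t,x)\), a stopping time \(\tau \geq t\) and an \(\cF_\tau\)-measurable kernel \(\omega \mapsto Q_\omega\) with \(\delta_{\omega(\tau(\omega))} \otimes_{\tau(\omega)} Q_\omega \in \cK(\tau(\omega), \omega(\tau(\omega)))\) for \(P\)-a.e.\ \(\omega\), and set \(\widetilde{Q}_\omega \triangleq \delta_\omega \otimes_{\tau(\omega)} Q_\omega\). Lemma~\ref{lem: iwie Markov} combined with the initial reduction gives \(\widetilde{Q}_\omega \in \cC(\tau(\omega), \omega)\) for \(P\)-a.e.\ \(\omega\). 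A short calculation based on \(\omega \otimes_t \omega' = \omega\1_{[0,t)} + (\omega(t) + \omega' - \omega'(t))\1_{[t,\infty)}\) verifies the idempotency \(\omega \otimes_{\tau(\omega)} (\omega \otimes_{\tau(\omega)} \omega'') = \omega \otimes_{\tau(\omega)} \omega''\), which in turn yields \(P \otimes_\tau Q = P \otimes_\tau \widetilde{Q}\). Lemma~\ref{lem: stab pasting} then gives \(P \otimes_\tau Q \in \cC(t, \omega^x)\), and since \(\{X = x \text{ on } [0,t]\} \in \cF_\tau\) (because \(\tau \geq t\)) while \(P = P \otimes_\tau Q\) on \(\cF_\tau\) by Lemma~\ref{lem: identiy up to tau}, one also gets \((P \otimes_\tau Q)(X_s = x \text{ for all } s \in [0, t]) = 1\), so \(P \otimes_\tau Q \in \cK(t,x)\). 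The main technical nuisance I anticipate throughout is the bookkeeping around the concatenation operator, each instance of which ultimately reduces to the cancellation \(\omega(\tau(\omega)) + \omega(\tau(\omega)) - \omega(\tau(\omega)) = \omega(\tau(\omega))\) at the splicing time; all other steps proceed by direct appeal to the machinery already developed for \(\cC\).
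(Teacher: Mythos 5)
Your proposal is correct and follows essentially the same route as the paper, whose proof is the one-line observation that stability under conditioning follows from Lemmata~\ref{coro: stab cond} and~\ref{lem: iwie Markov} and stability under pasting from Lemmata~\ref{lem: stab pasting} and~\ref{lem: iwie Markov}; you merely make explicit the identification \(\cK(t,x)=\cC(t,\omega^x)\) and the concatenation bookkeeping that the paper leaves implicit.
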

\begin{proof}
	Stability under conditioning follows from Lemmata \ref{coro: stab cond} and~\ref{lem: iwie Markov}, and stability under pasting follows from Lemmata \ref{lem: stab pasting} and \ref{lem: iwie Markov}.
\end{proof}

Recall from \cite[Definition 18.1]{charalambos2013infinite} that a correspondence \(\mathcal{U} \colon \bR_+ \times F \twoheadrightarrow \mathfrak{P}(\Omega)\) is called \emph{measurable} if the lower inverse \(\{ (t, x) \in \bR_+ \times F \colon \mathcal{U} (t, x) \cap G \not = \emptyset\}\) is Borel measurable for every closed set \(G \subset \mathfrak{P}(\Omega)\). The proof of the following result is similar to those of \cite[Lemma 5.12]{CN22b}. We added it for reader's convenience and because it explains why both stability properties, i.e., stability under conditioning and pasting, are needed, see also Remark \ref{rem: both stability needed} below.

\begin{lemma} \label{lem: U to U*}
	Suppose that \(\cU \colon \bR_+ \times F \twoheadrightarrow \mathfrak{P}(\Omega)\) is a measurable correspondence with nonempty and compact values such that, for all \((t, x) \in \bR_+ \times F\) and \(P \in \cU (t, x)\), \(P (X = x \text{ on } [0, t])= 1\). Suppose that \(\cU\) is stable under conditioning and pasting. Then, for every \(\phi \in \usc_b (F; \bR)\), the correspondence 
	\[
	\cU^* (t, x) \triangleq \Big\{ P \in \cU (t, x) \colon E^P \big[ \phi (X_T) \big] = \sup_{Q \in \cU (t, x)} E^Q \big[ \phi (X_T) \big] \Big\}, \quad (t, x) \in \bR_+ \times F,
	\]
	is also measurable with nonempty and compact values and it is stable under conditioning and pasting. Further, if \(\cU\) has convex values, then so does \(\cU^*\).
\end{lemma}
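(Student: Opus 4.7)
The plan is to verify the five assertions of the lemma — nonemptyness and compactness, measurability, stability under conditioning, stability under pasting, and preservation of convex values — in this order, exploiting throughout that the functional $g(P) \triangleq E^P[\phi(X_T)]$ is bounded and upper semicontinuous on $\mathfrak{P}(\Omega)$. (In the continuous-path setting of Condition~\ref{cond: sammel}(i) relevant for Theorem~\ref{theo: strong Markov selection}, $\omega \mapsto \phi(\omega(T))$ is bounded and u.s.c.\ on $\Omega = C(\bR_+;F)$, so the Portmanteau theorem gives u.s.c.\ of $g$.) Nonemptyness and compactness of $\cU^*(t,x) = \cU(t,x) \cap \{g \geq v(t,x)\}$, with $v(t,x) \triangleq \sup_{P \in \cU(t,x)} g(P)$, are immediate: the supremum of the u.s.c.\ $g$ on the nonempty compact set $\cU(t,x)$ is attained, and $\{g \geq v(t,x)\}$ is closed. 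For measurability of $\cU^*$, I would regularize by choosing $\psi_n \in C_b(F;\bR)$ with $\psi_n \searrow \phi$ pointwise; then $g_n(P) \triangleq E^P[\psi_n(X_T)]$ is continuous on $\mathfrak{P}(\Omega)$. The measurable maximum theorem \cite[Theorem~18.19]{charalambos2013infinite}, applied to $\cU$ and the continuous Carath\'eodory function $g_n$, shows that $v_n(t,x) \triangleq \max_{P \in \cU(t,x)} g_n(P)$ is Borel. Extracting a convergent subsequence of maximizers $P_n^* \to P^*$ in $\cU(t,x)$ and using $g_n(P_n^*) \geq g_n(Q)$ for any fixed $Q \in \cU(t,x)$ passes to the limit via dominated convergence to give $v_n \searrow v$; hence $v$ is Borel. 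The graph $\{(t,x,P) \in \on{gr}\cU : g(P) \geq v(t,x)\}$ of $\cU^*$ is therefore Borel, and \cite[Theorem~18.10]{charalambos2013infinite} yields that $\cU^*$ is measurable with nonempty and compact values.

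Stability under conditioning is the crux and I would argue by contradiction. Given $P \in \cU^*(t,x)$ and a finite stopping time $\tau \geq t$, stability of $\cU$ furnishes a $P$-null set $N \in \cF_\tau$ such that $R_\omega \triangleq \delta_{\omega(\tau(\omega))} \otimes_{\tau(\omega)} P(\cdot | \cF_\tau)(\omega) \in \cU(\tau(\omega),\omega(\tau(\omega)))$ for $\omega \notin N$. Assume there is an $\cF_\tau$-set $A \subset \{\tau < T\}$ with $P(A) > 0$ on which $R_\omega \notin \cU^*$; on $\{\tau \geq T\}$ every $Q \in \cU(\tau(\omega),\omega(\tau(\omega)))$ is concentrated on paths with $X_T = \omega(\tau(\omega))$, so $E^Q[\phi(X_T)]$ is independent of $Q$ and the argmax property holds automatically. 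Using measurability of $\cU^*$ and the Kuratowski--Ryll-Nardzewski selection theorem, I select an $\cF_\tau$-measurable kernel $\widetilde Q$ that coincides with a maximizer in $\cU^*(\tau(\omega),\omega(\tau(\omega)))$ on $A$ and with $R_\omega$ off $A$. Pasting stability of $\cU$ yields $P \otimes_\tau \widetilde Q \in \cU(t,x)$. Combining the standard identity $P = P \otimes_\tau R$ (valid because $P(\cdot|\cF_\tau)(\omega)$ is concentrated on paths equal to $\omega$ on $[0,\tau(\omega)]$ for $P$-a.a.\ $\omega$) with the elementary decomposition
\[
E^{P \otimes_\tau \widetilde Q}[\phi(X_T)] = E^P\big[\1_{\{T < \tau\}} \phi(X_T) + \1_{\{\tau \leq T\}} E^{\widetilde Q_\omega}[\phi(X_T)]\big],
\]
the strict improvement on $A$ gives $E^{P \otimes_\tau \widetilde Q}[\phi(X_T)] > E^P[\phi(X_T)] = v(t,x)$, contradicting $P \in \cU^*(t,x)$.

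Stability under pasting and convexity preservation are then short. For pasting, given $P \in \cU^*(t,x)$ and $Q$ with $Q_\omega \in \cU^*(\tau(\omega),\omega(\tau(\omega)))$ $P$-a.s., $\cU$'s pasting stability gives $P \otimes_\tau Q \in \cU(t,x)$; the already-established conditioning stability of $\cU^*$ combined with the hypothesis forces $E^{Q_\omega}[\phi(X_T)] = v(\tau(\omega),\omega(\tau(\omega))) = E^{R_\omega}[\phi(X_T)]$ $P$-a.s.\ on $\{\tau \leq T\}$, and the same decomposition above yields $E^{P \otimes_\tau Q}[\phi(X_T)] = E^P[\phi(X_T)] = v(t,x)$. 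Convex values are preserved because $g$ is affine in $P$. The main obstacle in the whole argument is the conditioning step, where measurability of $\cU^*$ and \emph{both} stability properties of $\cU$ must be invoked simultaneously to construct the improving kernel $\widetilde Q$; this is exactly why the conclusions on $\cU^*$ can be derived only once $\cU$ is known to possess all three of these features together.
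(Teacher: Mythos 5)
Your argument is correct and is essentially the paper's own: non-emptiness and compactness via Weierstrass for the u.s.c.\ map \(P \mapsto E^P[\phi(X_T)]\) on the compact set \(\cU(t,x)\); stability under conditioning by pasting an \(\cF_\tau\)-measurable selector of \(\cU^*\) against \(P(\cdot|\cF_\tau)\) and comparing with the optimality of \(P\) (the paper phrases this as showing \(P(A)=0\) rather than as an explicit contradiction, and handles the set \(\{\tau\geq T\}\) implicitly, but the mechanism is identical); stability under pasting by the same decomposition of \(E^{P\otimes_\tau Q}[\phi(X_T)]\); convexity by affinity of \(P\mapsto E^P[\phi(X_T)]\).

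The one step where your write-up is shakier than the paper's is the measurability of \(\cU^*\). Your approximation \(\psi_n\searrow\phi\), \(g_n(P)=E^P[\psi_n(X_T)]\), \(v_n\searrow v\) is fine and gives Borel measurability of the value function \(v\); but the final inference ``\(\on{gr}\cU^*\) is Borel, hence \(\cU^*\) is measurable by \cite[Theorem~18.10]{charalambos2013infinite}'' is not covered by that theorem, and in general passing from a Borel graph to Borel measurability of a correspondence requires either a completeness assumption on the \(\sigma\)-algebra or a genuinely nontrivial projection result (it is true here only because the values are compact, via Arsenin--Kunugui-type theorems). Fortunately your own construction repairs this elementarily: since \(g=\inf_n g_n\) with \(g_n\) continuous and decreasing, one has \(\cU^*(t,x)=\bigcap_n\{P\in\cU(t,x)\colon g_n(P)\geq v(t,x)\}\), a decreasing intersection of measurable correspondences with compact values, whose lower inverses of closed sets are the (Borel) intersections of the corresponding lower inverses by the finite intersection property; alternatively, simply cite \cite[Lemma~12.1.7]{stroock2007multidimensional} together with \cite[Theorem~18.10]{charalambos2013infinite}, as the paper does. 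A second, minor, omission of the same flavour occurs in the conditioning step: you posit an \(\cF_\tau\)-set \(A\subset\{\tau<T\}\) with \(P(A)>0\) on which \(R_\omega\notin\cU^*\), but to conclude the a.s.\ statement you must know that the bad set \(\{\omega\colon R_\omega\notin\cU^*(\tau(\omega),\omega(\tau(\omega)))\}\) is itself \(\cF_\tau\)-measurable; this follows from the graph measurability of \(\cU^*\) and the \(\cF_\tau\)-measurability of \(\omega\mapsto(\tau(\omega),\omega(\tau(\omega)),R_\omega)\) (the paper invokes \cite[Lemma~12.1.9]{stroock2007multidimensional} for exactly this), so it should be stated explicitly rather than assumed.
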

\begin{proof}
	We adapt the proof of \cite[Lemma 12.2.2]{stroock2007multidimensional}, see also the proofs of \cite[Lemma~3.4~(c), (d)]{hausmann86}.
	As \(\psi\) is assumed to be upper semicontinuous, \cite[Theorem 2.43]{charalambos2013infinite} implies that \( \cU^* \) has nonempty and compact values. Further, \cite[Theorem~18.10]{charalambos2013infinite} and \cite[Lemma~12.1.7]{stroock2007multidimensional} imply that \(\cU^*\) is measurable.
	The final claim about convexity is obvious. It is left to show that \(\cU^*\) is stable under conditioning and pasting.
	Take \((t, x) \in \bR_+ \times F, P \in \cU^* (t, x)\) and let \(\tau\) be a stopping time such that \(t \leq \tau < \infty\). We define 
	\begin{align*}
	N &\triangleq \big\{ \omega \in \Omega \colon P_\omega \triangleq \delta_{\omega (\tau (\omega))} \otimes_{\tau (\omega)} P (\, \cdot\, | \cF_\tau) (\omega) \not \in \cU (\tau (\omega), \omega (\tau (\omega)))\big\}, \\
	A &\triangleq \big\{ \omega \in \Omega \backslash N \colon P_\omega \not \in \cU^* (\tau (\omega), \omega (\tau (\omega)))\big\}.
	\end{align*}
	As \(\cU\) is stable under conditioning, we have \(P(N) = 0\). By \cite[Lemma 12.1.9]{stroock2007multidimensional}, \(N, A \in \cF_\tau\). As we already know that \(\cU^*\) is measurable, by virtue of \cite[Theorem 12.1.10]{stroock2007multidimensional}, there exists a measurable map \((s, y) \mapsto R (s, y)\) such that \(R (s, y) \in \cU^* (s, y)\). We set \(R_\omega \triangleq R (\tau (\omega), \omega (\tau (\omega)))\), for \(\omega \in \Omega\), and note that \(\omega \mapsto R_\omega\) is \(\cF_\tau\)-measurable. Further, we set 
	\[
	Q_\omega \triangleq \begin{cases} R_\omega, & \omega \in N \cup A,\\
	P_\omega, & \omega \not \in N \cup A. \end{cases}
	\]
	By definition of \(R\) and \(N\), \(Q_\omega \in \cU (\tau(\omega), \omega (\tau (\omega)))\) for all \(\omega \in \Omega\).
	As \(\cU\) is stable under pasting, \(P \otimes_{\tau} Q\in \cU(t, x)\). We obtain
	\begin{align*}
	&\sup_{Q^* \in \cU (t, x)} E^{Q^*} \big[ \phi (X_T) \big] 
	\\&\qquad\geq E^{P \otimes_{\tau} Q} \big[ \phi (X_T) \big]
	\\&\qquad= \int_{N\cup A} E^{\delta_\omega \otimes_{\tau (\omega)} R_\omega} \big[ \phi (X_T) \big] P (d \omega) + E^P \big[ \1_{N^c \cap A^c}E^P \big[ \phi (X_T) | \cF_\tau \big] \big] 
	\\&\qquad= \int_{A} E^{\delta_\omega \otimes_{\tau (\omega)} R_\omega} \big[ \phi (X_T) \big] P (d \omega) + E^P \big[ \1_{A^c}E^P \big[ \phi (X_T) | \cF_\tau \big] \big]
	\\&\qquad= \int_{A} \big[ E^{\delta_\omega \otimes_{\tau (\omega)} R_\omega} \big[ \phi (X_T) \big] - E^{\delta_\omega \otimes_{\tau (\omega)} P_\omega} \big[ \phi (X_T) \big] \big] P (d \omega) + \sup_{Q^* \in \cU (t, x)} E^{Q^*} \big[ \phi (X_T) \big]
	\\&\qquad= \int_{A} \big[ E^{R_\omega} \big[ \phi (X_T) \big] - E^{P_\omega} \big[ \phi (X_T) \big] \big] P (d \omega) + \sup_{Q^* \in \cU (t, x)} E^{Q^*} \big[ \phi (X_T) \big].
	\end{align*}
	As \(E^{R_\omega} \big[ \phi (X_T) \big] > E^{P_\omega} \big[ \phi (X_T) \big]\) for all \(\omega \in A\), we conclude that \(P (A) = 0\). This proves that \(\cU^*\) is stable under conditioning. 
	
	Next, we prove stability under pasting. Let \((t, x) \in \bR_+ \times F\), take a stopping time \(\tau\) with \(t \leq \tau < \infty\), a probability measure \(P \in \cU^*(t, x)\) and an \(\mathcal{F}_\tau\)-measurable map \(\Omega \ni \omega \mapsto Q_\omega \in \mathfrak{P}(\Omega)\) such that, for \(P\)-a.a. \(\omega \in \Omega\), 
	\(\delta_{\omega (\tau(\omega))} \otimes_{\tau (\omega)} Q_\omega \in \cU^* (\tau (\omega), \omega (\tau (\omega)))\). As \(\cU\) is stable under pasting, we have \(P \otimes_\tau Q \in \cU (t, x)\). Further, recall that \(\delta_{\omega (\tau (\omega))} \otimes_{\tau (\omega)} P (\, \cdot\, | \cF_\tau) (\omega)\in \cU (\tau (\omega), \omega (\tau (\omega)))\) for \(P\)-a.a. \(\omega \in \Omega\), as \(\cU\) is stable under conditioning. Thus, we get 
	\begin{align*}
	\sup_{Q^* \in \cU (t, x)} &E^{Q^*} \big[ \phi (X_T) \big] \\&\geq E^{P \otimes_\tau Q} \big[ \phi (X_T) \big] 
	\\&= \int E^{\delta_\omega \otimes_{\tau (\omega)} Q_\omega} \big[ \phi (X_T) \big] P (d \omega)
	\\&= \int E^{\delta_{\omega (\tau (\omega))} \otimes_{\tau (\omega)} Q_\omega} \big[ \phi (X_T) \big] \1_{\{\tau (\omega) < T\}}P (d \omega) + E^P \big[ \phi (X_T) \1_{\{T \leq \tau\}}\big]
	\\&= \int \sup_{Q^* \in \cU (\tau (\omega), \omega (\tau (\omega)))} E^{Q^*} \big[ \phi (X_T) \big] \1_{\{\tau (\omega) < T\}} P (d \omega) + E^P \big[ \phi (X_T) \1_{\{T \leq \tau\}}\big]
	\\&\geq \int E^{\delta_{\omega (\tau (\omega))} \otimes_{\tau (\omega)} P (\, \cdot\, | \cF_\tau)(\omega)} \big[ \phi (X_T) \big] \1_{\{\tau (\omega) < T\}} P (d \omega) + E^P \big[ \phi (X_T) \1_{\{T \leq \tau\}}\big]
	\\&= E^P \big[ E^P \big[ \phi (X_T) | \cF_\tau \big] \1_{\{\tau < T\}} \big] + E^P \big[ \phi (X_T) \1_{\{T \leq \tau\}}\big]
	\\&= E^P \big[ \phi (X_T) \big]
	\\&= \sup_{Q^* \in \cU (t, x)} E^{Q^*} \big[ \phi (X_T) \big].
	\end{align*}
	This implies that \(P \otimes_\tau Q\in \cU^* (t, x)\). The proof is complete.
\end{proof}

\subsection{Proof of Theorem \ref{theo: strong Markov selection}}
	We follow the proof of \cite[Theorem 2.19]{CN22b}, which adapts the proofs of \cite[Theorems 6.2.3 and 12.2.3]{stroock2007multidimensional}, cf. also the proofs of \cite[Proposition 6.6]{nicole1987compactification} and \cite[Proposition 3.2]{hausmann86}. In the following, Condition~\ref{cond: sammel} is in force.
	
	We fix a finite time horizon \(T > 0\) and a function \(\psi \in \usc_b (F; \bR)\).
	As \(F\) is a Polish space, there exists an equivalent metric \(d_F\) with respect to which \(F\) is totally bounded (see \cite[p. 43]{par} or \cite[p. 9]{stroock2007multidimensional}). Let \(U_{d_F} (F)\) be the space of all bounded uniformly continuous functions on \((F, d_F)\) endowed with the uniform topology. Then, by \cite[Lemma 6.3]{par}, \(U_{d_F}(F)\) is separable. Furthermore, by \cite[Theorem 5.9]{par}, \(U_{d_F} (F)\) is measure determining.   
	Let \(\{\sigma_n \colon n \in \mathbb{N}\}\) be a dense subset of \((0, \infty)\), let \(\{\phi_n \colon n \in \mathbb{N}\}\) be a dense subset of \(U_{d_F}(F)\) and let \((\lambda_N, f_N)_{N = 1}^\infty\) be an enumeration of \(\{(\sigma_m, \phi_n) \colon n, m \in \mathbb{N}\}\). 
	For \((t, x) \in \bR_+ \times F\), define inductively  
	\[
	\cU_0 (t, x) \triangleq \Big\{ P \in \cK (t, x)\colon E^P \big[ \psi (X_T) \big] = \sup_{Q \in \cK(t, x)} E^Q \big[ \psi (X_T) \big] \Big\}
	\]
	and 
	\[
	\cU_{N + 1} (t, x) \triangleq \Big\{ P \in \cU_N (t, x) \colon E^P \big[ f_{N + 1} (X_{\lambda_{N + 1}}) \big] = \sup_{Q \in \cU_N (t, x)} E^Q \big[ f_{N + 1} (X_{\lambda_{N + 1}}) \big] \Big\}
	\]
	for \(N \in \mathbb{Z}_+\).
	Furthermore, we set \[\cU_\infty (t, x) \triangleq \bigcap_{N = 0}^\infty \cU_N (t, x).\]
	
	Thanks to Proposition \ref{prop: K upper hemi and compact} and Lemmata \ref{lem: r^* compact} and \ref{lem: K stable under both}, the correspondence \(\cK\) is measurable with nonempty convex and compact values and it is further stable under conditioning and pasting. Thus, by Lemma \ref{lem: U to U*}, the same is true for \(\cU_0\) and, by induction, also for every \(\cU_N, N \in \mathbb{N}\). 
	As (arbitrary) intersections of convex and compact sets are itself convex and compact, \(\cU_\infty\) has convex and compact values. By Cantor's intersection theorem, \(\cU_\infty\) has nonempty values, and, by \cite[Lemma~18.4]{charalambos2013infinite}, \(\cU_\infty\) is measurable. Moreover, it is clear that \(\cU_\infty\) is stable under conditioning, as this is the case for every \(\cU_N, N \in \mathbb{Z}_+\). 
	
	We now show that \(\cU_\infty\) is singleton-valued. Take \(P, Q \in \cU_\infty (t, x)\) for some \((t, x) \in \bR_+ \times F\). By definition of \(\cU_\infty\), we have 
	\[
	E^P \big[ f_N (X_{\lambda_N}) \big] = E^Q \big[ f_N (X_{\lambda_N})\big], \quad N \in \mathbb{N}.
	\]
	This implies that \(P \circ X_s^{-1} = Q \circ X_s^{-1}\) for all \(s \in \bR_+\). Next, we prove that 
	\[
	E^P \Big[ \prod_{k = 1}^n g_k (X_{t_k}) \Big] = E^Q \Big[ \prod_{k = 1}^n g_k (X_{t_k}) \Big] 
	\]
	for all \(g_1, \dots, g_n \in C_b (F; \bR), t \leq t_1 < t_2 < \dots < t_n < \infty\) and \(n \in \mathbb{N}\). We use induction over \(n\). For \(n = 1\) the claim is implied by the equality \(P \circ X_s^{-1} = Q \circ X_s^{-1}\) for all \(s \in \bR_+\). Suppose that the claim holds for \(n \in \mathbb{N}\) and take test functions \(g_1, \dots, g_{n + 1} \in C_b (F; \bR)\) and times \(t \leq t_1 < \dots < t_{n + 1} < \infty\).  We define 
	\[
	\mathcal{G}_n \triangleq \sigma (X_{t_k}, k = 1, \dots, n).
	\]
	Since 
	\[
	E^P \Big[ \prod_{k = 1}^{n + 1} g_k (X_{t_k}) \Big] = E^P \Big[ E^P \big[ g_{n + 1} (X_{t_{n + 1}}) | \mathcal{G}_n \big] \prod_{k = 1}^n g_k (X_{t_k}) \Big], 
	\]
	it suffices to show that \(P\)-a.s. 
	\[
	E^P \big[ g_{n + 1} (X_{t_{n + 1}}) | \mathcal{G}_n \big] = E^Q \big[ g_{n + 1} (X_{t_{n + 1}}) | \mathcal{G}_n \big].
	\]
	As \(\cU_\infty\) is stable under conditioning, there exists a null set \(N_1 \in \cF_{t_n}\) such that \(\delta_{\omega (t_n)} \otimes_{t_n} P (\, \cdot\, | \cF_{t_n}) (\omega) \in \cU_\infty (t_n, \omega (t_n))\) for all \(\omega \not \in N_1\). Notice that, by the tower rule, there exists a \(P\)-null set \(N_2 \in \mathcal{G}_n\) such that, for all \(\omega \not \in N_2\) and all \(A \in \cF\), 
	\begin{equation} \label{eq: convex computation}
	\begin{split}
	\int \delta_{\omega' (t_n)} \otimes_{t_n} &P (A | \cF_{t_n}) (\omega') P (d \omega' | \mathcal{G}_n) (\omega) 
	\\&= \iint \1_A (\omega' (t_n) \otimes_{t_n} \alpha) P (d \alpha | \cF_{t_n}) (\omega' ) P (d \omega' | \mathcal{G}_n) (\omega) 
	\\&= \iint \1_A (\omega (t_n) \otimes_{t_n} \alpha) P (d \alpha | \cF_{t_n}) (\omega' ) P (d \omega' | \mathcal{G}_n) (\omega) 
	\\&=  \int \1_A (\omega (t_n) \otimes_{t_n} \omega') P (d \omega' | \mathcal{G}_n) (\omega) 
	\\&= (\delta_{\omega (t_n)} \otimes_{t_n} P (\, \cdot\, | \mathcal{G}_n) (\omega)) (A).
	\end{split}
	\end{equation}
	Let \(N_3 \triangleq \{P (N_1 | \mathcal{G}_n) > 0\} \in \mathcal{G}_n\). Clearly, \(E^P [ P (N_1| \mathcal{G}_n)] = P(N_1) = 0\), which implies that \(P (N_3) = 0\). For a moment, take \(\omega \not \in N_2 \cup N_3\). Using that \(\cU_\infty\) has convex and compact values and that \(\delta_{\omega' (t_n)} \otimes_{t_n} P (\, \cdot\, | \cF_{t_n}) (\omega') \in \cU_\infty (t_n, \omega' (t_n))\) for all \(\omega' \not \in N_1\), we obtain that 
	\[
	\int \delta_{\omega' (t_n)} \otimes_{t_n} P (A | \cF_{t_n}) (\omega') P (d \omega' | \mathcal{G}_n) (\omega) \in \cU_\infty (t_n, \omega (t_n)).
	\]
	Consequently, by virtue of \eqref{eq: convex computation}, we also have \(\delta_{\omega (t_n)} \otimes_{t_n} P (\, \cdot\, | \mathcal{G}_n) (\omega) \in \cU_\infty (t_n, \omega (t_n))\). Similarly, there exists a \(Q\)-null set \(N_4 \in \mathcal{G}_n\) such that \(\delta_{\omega (t_n)} \otimes_{t_n} Q (\, \cdot\, | \mathcal{G}_n) (\omega) \in \cU_\infty (t_n, \omega (t_n))\) for all \(\omega \not \in N_4\). Set \(N \triangleq N_2 \cup N_3 \cup N_4\). As \(P = Q\) on \(\mathcal{G}_n\), we get that \(P (N) = 0\). For all \(\omega \not \in N\), the induction base implies that 
	\begin{align*}
	E^P\big[ g_{n + 1} (X_{t_{n + 1}}) | \mathcal{G}_n\big] (\omega) &= E^{\delta_{\omega (t_n)} \otimes_{t_n} P (\, \cdot\, | \mathcal{G}_n)(\omega)} \big[ g_{n + 1} (X_{t_{n + 1}})\big] 
	\\&= E^{\delta_{\omega (t_n)} \otimes_{t_n} Q (\, \cdot\, | \mathcal{G}_n)(\omega)} \big[ g_{n + 1} (X_{t_{n + 1}})\big]
	\\&= E^Q\big[ g_{n + 1} (X_{t_{n + 1}}) | \mathcal{G}_n\big] (\omega).
	\end{align*}
	The induction step is complete and hence, \(P = Q\). 
	
	We proved that \(\cU_\infty\) is singleton-valued and we write \(\cU_\infty (s, y) = \{P_{(s, y)}\}\). By the measurability of \(\cU_\infty\), the map \((s, y) \mapsto P_{(s,y)}\) is measurable. It remains to show the strong Markov property of the family \(\{P_{(s, y)} \colon (s, y) \in \bR_+ \times F\}\). Take \((s, y) \in \bR_+ \times F\). As \(\cU_\infty\) is stable under conditioning, for every finite stopping time \(\tau \geq s\), there exists a \(P_{(s, x)}\)-null set \(N\) such that, for all \(\omega \not \in N\),
	\[
	\delta_{\omega (\tau (\omega))} \otimes_{\tau (\omega)} P_{(s, y)} (\, \cdot\, | \cF_{\tau})(\omega) \in \cU_\infty (\tau (\omega), \omega (\tau (\omega))) = \{P_{(\tau (\omega), \omega (\tau (\omega)))}\}.\]
	This yields, for all \(\omega \not \in N\), that 
	\[
	P_{(s, y)} (\, \cdot\, | \cF_\tau)(\omega) = \delta_\omega \otimes_{\tau (\omega)} \big[ \delta_{\omega (\tau (\omega))} \otimes_{\tau (\omega)} P_{(s, y)} (\, \cdot\, | \cF_{\tau})(\omega)\big] = \delta_\omega \otimes_{\tau (\omega)} P_{(\tau (\omega), \omega (\tau (\omega)))}.
	\]
	This is the strong Markov property and consequently, the proof is complete.
\qed

\begin{remark} \label{rem: both stability needed}
	Notice that the strong Markov property of the selection \(\{P_{(s, y)} \colon (s, y) \in \bR_+ \times F\}\) follows solely from the stability under conditioning property of \(\cU_\infty\). We emphasis that the stability under pasting property of each \(\cU_N, N \in \mathbb{Z}_+,\) is crucial for its proof. Indeed, in Lemma \ref{lem: U to U*}, the fact that \(\cU\) is stable under pasting has been used to establish that \(\cU^*\) is stable under conditioning.
\end{remark}

\section{Proof of the Nonlinear Markov Property: Proposition \ref{prop: markov property}} \label{sec: pf NMP}
In this section, the Standing Assumptions \ref{SA: meas gr}, \ref{SA: non empty} and \ref{SA: markov} are in force.
We need a refinement of Lemma \ref{lem: implication c^*}.

\begin{lemma} \label{lem: needed for MP}
	For every \((t, \omega) \in \of 0, \infty\of\), \(\cK(0, \omega (t)) = \{P_t \colon P \in \cC(t, \omega)\}\).
\end{lemma}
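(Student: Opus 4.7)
One inclusion is essentially free: by Lemma~\ref{lem: implication c^*}, $P \in \cC(t,\omega)$ implies $P_t \in \cK(0, \omega(t))$, so $\{P_t \colon P \in \cC(t,\omega)\} \subset \cK(0, \omega(t))$. The substance of the proof is therefore the reverse inclusion: given $Q \in \cK(0, \omega(t))$, I would exhibit an explicit $P \in \cC(t,\omega)$ with $P_t = Q$.

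The plan is to construct $P$ by prepending the fixed path $\omega|_{[0,t)}$ in front of paths sampled under $Q$, which avoids the spatial offset built into the concatenation $\otimes_t$ and makes the shift-relation transparent. Since $Q(X_0 = \omega(t)) = 1$, the map $\Psi \colon \Omega \to \Omega$ defined by $\Psi(\omega')(s) \triangleq \omega(s)$ for $s < t$ and $\Psi(\omega')(s) \triangleq \omega'(s-t)$ for $s \geq t$ is $Q$-a.s. well-defined as a \cadlag path (at $s = t$ the right-value is $\omega'(0) = \omega(t)$, matching the spatial endpoint of $\omega|_{[0,t)}$). Setting $P \triangleq Q \circ \Psi^{-1}$, the identity $\theta_t \circ \Psi = \on{id}_\Omega$ immediately yields $P_t = Q$, and $P(X = \omega \text{ on } [0,t]) = 1$ by construction.

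It remains to verify $P \in \cC(t, \omega)$. Fix $u \in U$. Since $Y^u_{\cdot + t} = Y^u \circ \theta_t$ and the right-continuous natural filtration of $X_{\cdot + t}$ satisfies $\cF^{X_{\cdot + t}}_{s+} = \theta_t^{-1}(\cF_{s+})$, the map $\phi = \theta_t$, with $P^* = Q = P_t$ and $P' = P$, satisfies the hypotheses of Lemma~\ref{lem: jacod restatements}. Combined with \eqref{eq: characterization AP}, that lemma transfers $Y^u \in \fSac(Q)$ into $Y^u \in \fSac(t, P)$ and produces the identity $\A^P(Y^u_{\cdot + t}) = \A^Q(Y^u) \circ \theta_t$; in particular, the corresponding densities satisfy $(d\A^P(Y^u_{\cdot + t})/d\llambda)(s, \omega'') = (d\A^Q(Y^u)/d\llambda)(s, \theta_t(\omega''))$ for $(\llambda \otimes P)$-a.a.\ $(s, \omega'')$.

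Finally, for the $\Theta$-constraint, Standing Assumption~\ref{SA: markov} gives $\Theta(s+t, \omega'') = \Theta(\omega''((s+t)-)) = \Theta(\theta_t(\omega'')(s-)) = \Theta(s, \theta_t(\omega''))$, so the $P$-exceptional set on which the density constraint fails is precisely the preimage, under $(s, \omega'') \mapsto (s, \theta_t(\omega''))$, of the $Q$-exceptional set. Because $P_t = Q$, Fubini delivers $(\llambda \otimes P) \circ (\on{id} \times \theta_t)^{-1} = \llambda \otimes Q$, and the $Q$-exceptional set is $(\llambda \otimes Q)$-null since $Q \in \cK(0, \omega(t))$; hence the $P$-exceptional set is $(\llambda \otimes P)$-null. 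In view of the alternative form of $\cC$ recorded in Remark~(ii) following Standing Assumption~\ref{SA: non empty}, this establishes $P \in \cC(t, \omega)$ and completes the reverse inclusion. The only technical point is matching the right-continuous filtrations so that Lemma~\ref{lem: jacod restatements} applies, which mirrors the calculation already performed inside Lemma~\ref{lem: implication c^*}.
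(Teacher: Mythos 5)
Your proof is correct, and in substance it builds the very same measure as the paper: since under \(P^t = P \circ \gamma_t^{-1}\) the path is a.s.\ constant equal to \(\omega(t)\) on \([0,t]\), the paper's pasting \(\delta_\omega \otimes_t P^t\) coincides with your pushforward \(Q \circ \Psi^{-1}\) (the spatial offset in \(\otimes_t\) vanishes a.s.), and both proofs use Lemma \ref{lem: implication c^*} for the easy inclusion and the identity \(\theta_t \circ (\text{prepending}) = \on{id}\) to get \(P_t = Q\). The difference is in how membership in \(\cC(t,\omega)\) is certified: the paper simply cites Lemma \ref{lem: p^t} (to pass from \(\cK(0,\omega(t))\) to \(\cK(t,\omega(t))\)) and Lemma \ref{lem: iwie Markov} (to exchange the constant prefix for \(\omega\)), whereas you verify it from scratch by applying Lemma \ref{lem: jacod restatements} with \(\phi=\theta_t\), the filtration identification \(\theta_t^{-1}(\cF_{s+})=\cF^{X_{\cdot+t}}_{s+}\) of Lemma \ref{lem: filtraation shift}, Standing Assumption \ref{SA: markov}, and the remark that \(\cC(t,\omega)\) may be written with \(\Theta(\cdot+t,X)\). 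This is exactly the transfer argument that underlies the paper's Lemmata \ref{lem: p^t} and \ref{lem: iwie Markov}, so you are effectively inlining those lemmas: the paper's route is shorter given its toolbox, while yours is self-contained and makes the shift relation and the \(\Theta\)-constraint bookkeeping fully explicit. The only point to tidy is that when \(\Omega\) is the continuous-path space, \(\Psi(\omega')\) lies in \(\Omega\) only on the event \(\{\omega'(0)=\omega(t)\}\); since this event has full \(Q\)-measure and is Borel, redefining \(\Psi\) arbitrarily (and measurably) on its complement fixes this without affecting anything, so it is a cosmetic rather than a genuine gap.
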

\begin{proof}
	The inclusion  \(\cK(0, \omega (t)) \supset \{P_t \colon P \in \cC(t, \omega)\}\) follows from Lemma \ref{lem: implication c^*}. For the converse inclusion, take \(P \in \cK(0, \omega (t))\). Then, by Lemmata \ref{lem: p^t} and \ref{lem: iwie Markov}, we have \(Q \triangleq \delta_\omega \otimes_t P^t \in \cC (t, \omega)\). 
	Since, for every \(A \in \cF\), it holds that
	\begin{align*}
	Q_t (A) = \int \1_{\theta^{-1}_t A} (\omega \otimes_t \omega') P^t (d \omega')
	= (P^t)_t (A) = P (A), 
	\end{align*}
	 the proof is complete. 
\end{proof}

We can now follow the proof of \cite[Proposition 2.8]{CN22b} to deduce Proposition \ref{prop: markov property} from the DPP (Theorem \ref{theo: DPP}).

\begin{proof}[Proof of Proposition \ref{prop: markov property}]
For every upper semianalytic function \(\psi \colon \Omega \to [- \infty, \infty]\) and any \((t, \omega) \in \of 0, \infty\of\), we deduce from Lemma \ref{lem: needed for MP} that 
	\[ \cE_t(\psi \circ \theta_t)(\omega) = \sup_{P \in \cC(t, \omega)} E^{P_t} \big[ \psi \big] =
	 \sup_{P \in \cK(0, \omega(t))} E^P \big[ \psi\big],
	\]
	which means nothing else than 
	\begin{align*}
	 \cE_t(\psi \circ \theta_t)(\omega) = \cE^{\omega(t)}(\psi).
	\end{align*}
	Now, the DPP (Theorem \ref{theo: DPP}) yields that 
	\[
	\cE^x(\psi \circ \theta_t )  = \cE^x( \cE_t( \psi \circ \theta_t)) 
	= \cE^x( \cE^{X_t}(\psi)).
	\]
	The proof is complete.
\end{proof}

\section{Proof of the Sublinear Semigroup Property: Proposition \ref{prop: semigroup}} \label{sec: pf SG}
First, let us show that \(T_t\), for every \(t \in \bR_+\), is a selfmap on the space of bounded upper semianalytic functions. Take a bounded upper semianalytic function \(\psi \colon F \to \bR\) and \(t \in \bR_+\). Clearly, \(T_t (\psi)\) is bounded. It remains to show that \(T_t (\psi)\) is also upper semianalytic. Define a function \(\phi \colon F \times \Omega \to [- \infty, \infty)\) by
\[
\phi (x, \omega) \triangleq \psi (\omega (t)) \1_{\{\omega (0) = x\}} + (- \infty) \1_{\{\omega (0) \not = x\}}.
\]
Evidently, \(\Omega \ni \omega \mapsto \omega (t)\) and \(F \times \Omega \ni (x, \omega) \mapsto \1_{\{\omega (0) = x\}}\) are Borel. Hence, by \cite[Lemma~7.30]{bershre}, the function \(\phi\) is upper semianalytic.
We set 
\[
\cK \triangleq \big\{ P \in \mathfrak{P}(\Omega) \colon P \in \cC(0, \omega) \text{ for some } \omega \in \Omega\big\}.
\]
By Lemma~\ref{lem: MGC Borel}, the correspondence \(\cC\) has a Borel measurable graph. Thus, the set 
\[
K \triangleq \big\{ (\omega, P) \in \Omega \times \mathfrak{P}(\Omega) \colon P \in \cC (0, \omega) \big\}
\]
is Borel measurable, and consequently, the set \(\cK\) is analytic as it is the image of \(K\) under the projection to the second coordinate (\cite[Proposition 7.39]{bershre}). 
Notice that
\[
T_t (\psi) (x) = \sup_{P \in \cK} E^P \big[ \phi (x, X) \big], \quad x \in F.
\]
We conclude from \cite[Propositions 7.47 and 7.48]{bershre} that \(x \mapsto T_t (\psi) (x)\) is upper semianalytic.

Finally, we discuss the properties (i) -- (iii) from Definition \ref{def: nonlinear MSG}. The properties (ii) and (iii) are trivially satisfied and the first property (i) is implied by Proposition \ref{prop: markov property}. The proof is complete. \qed

\section{Proof of the \(\usc_b\)-Feller Property: Theorem \ref{thm: USC Feller property}} \label{sec: pf USC FP}
Fix \(\psi \in \usc_b(F; \mathbb{R})\) and \(t \in \bR_+\), and notice that 
\(
\omega \mapsto \psi(\omega(t))
\)
is upper semicontinuous and bounded. Thus, by \cite[Theorem~8.10.61]{bogachev}, the map 
\(
\mathfrak{P}(\Omega) \ni P \mapsto E^{P}[\psi(X_{ t}) ]
\)
is upper semicontinuous, too.
Thanks to Proposition \ref{prop: K upper hemi and compact}, the correspondence \(x \mapsto \cR (x)\) is upper hemicontinuous and compact-valued. 
Thus, upper semicontinuity of 
\( x \mapsto T_t (\psi) (x) \)
follows from \cite[Lemma~17.30]{charalambos2013infinite}. The proof is complete.
\qed

\section{Proof of Proposition \ref{prop: conds hold for SPDE}} \label{sec: pf NSPDE}
The following section is devoted to the proof of Proposition \ref{prop: conds hold for SPDE}, which is split into several parts. In this section, we presume that Condition \ref{cond: main SPDE} holds.
\subsection{Proof of parts (i) and (ii) from Condition \ref{cond: sammel}}
Take \(u = \langle y, \cdot\, \rangle_{H_2}^i \in U\).
By virtue of \cite[Problem 13, p. 151]{EK}, the maps \(\omega\mapsto Y^u (\omega)  = \langle y, \omega\rangle_{H_2}^i\) and \(\omega \mapsto L (\omega) = \|\omega\|_{H_2}\) are continuous. Furthermore, for every \(R, M > 0\), we have \[|Y^u_{\cdot \wedge \rho_M}| \1_{\{\|X_0\|_{H_2} \leq R\}} \leq \|y\|^i_{H_2} (R + M)^i.\]
Thus, part (i) of Condition~\ref{cond: sammel} holds.

The correspondence \(\Theta\) is convex-valued by part (ii) of Condition \ref{cond: main SPDE}. Hence, also part (ii) of Condition \ref{cond: sammel} holds. \qed
\subsection{Proof of part (iii) from Condition \ref{cond: sammel}}
The linear growth assumption (iv) from Condition \ref{cond: main SPDE} implies the first part of Condition \ref{cond: sammel} (iii). The second part, i.e., \eqref{eq: moment bound relax} for \(L = \|X\|_{H_2}\), follows directly from the following lemma and Chebyshev's inequality. \qed

\begin{lemma} \label{lem: uni moment bound}
	For every bounded set \(K \subset H_2,\) every \(T > 0\) and every \(p > 1 / \alpha\), where \(\alpha \in (0, 1/2)\) is as in part \textup{(v)} of Condition \ref{cond: main SPDE}, 
	\[
	\sup_{x \in K} \sup_{P \in \cR(x)} E^P \Big[ \sup_{s \in [0, T]} \|X_s\|^p_{H_2} \Big] < \infty.
	\]
\end{lemma}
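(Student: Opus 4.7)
The plan is to exploit the mild solution representation of any $P \in \cR(x)$ together with the factorization method of Da Prato, Kwapień and Zabczyk, and close the estimate via Gronwall. By Lemma \ref{lem: SPDE representation}, every $P \in \cR(x)$ is realized as the law of a process $Y$ satisfying
\[
Y_t = S_t x + \int_0^t S_{t-s} \mu(\g(s, Y), Y_s)\hspace{0.04cm} ds + \int_0^t S_{t-s} \sigma(\g(s, Y), Y_s) \hspace{0.04cm}dW_s
\]
on some filtered probability space carrying a cylindrical $H_1$-Brownian motion $W$, where $\g$ is predictable and $G$-valued. It therefore suffices to bound $E[\sup_{s \leq T}\|Y_s\|^p_{H_2}]$ uniformly in $x \in K$ and in $\g$.

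I would estimate the three contributions separately. Let $M \triangleq \sup_{t \in [0, T]}\|S_t\|_{L(H_2, H_2)}$, which is finite by the $C_0$-semigroup property. The semigroup term is controlled by $M\|x\|_{H_2}$, uniformly in $x \in K$. For the Bochner integral, Jensen's inequality and the linear growth bound from Condition~\ref{cond: main SPDE}(iv) give
\[
\Big\|\int_0^t S_{t-s}\mu(\g(s, Y), Y_s) \hspace{0.04cm}ds\Big\|^p_{H_2} \leq (MT\C)^{p-1} M\C \int_0^t (1 + \|Y_s\|_{H_2})^p \hspace{0.04cm}ds.
\]
For the stochastic convolution I would apply the factorization identity
\[
\int_0^t S_{t-s}\sigma(\g(s, Y), Y_s) \hspace{0.04cm}dW_s = \tfrac{\sin(\pi\alpha)}{\pi}\int_0^t (t-s)^{\alpha-1} S_{t-s} Z_s \hspace{0.04cm}ds
\]
with $Z_s \triangleq \int_0^s (s-r)^{-\alpha} S_{s-r}\sigma(\g(r, Y), Y_r) \hspace{0.04cm}dW_r$. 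Since $p > 1/\alpha$, the conjugate exponent $q = p/(p-1)$ satisfies $q(\alpha-1) > -1$, so Hölder's inequality in $s$ produces a pathwise bound
\[
\sup_{t \in [0, T]}\Big\|\int_0^t (t-s)^{\alpha-1} S_{t-s} Z_s \hspace{0.04cm}ds\Big\|^p_{H_2} \leq C_1 \int_0^T \|Z_s\|^p_{H_2} \hspace{0.04cm}ds
\]
with $C_1 = C_1(\alpha, p, T, M)$. The Burkholder--Davis--Gundy inequality in $H_2$, combined with the Hilbert--Schmidt bound from Condition~\ref{cond: main SPDE}(v), yields
\[
E[\|Z_s\|^p_{H_2}] \leq C_p\Big(\int_0^s [\f(s-r)/(s-r)^\alpha]^2 (1 + \|Y_r\|_{H_2})^2 \hspace{0.04cm}dr\Big)^{p/2}.
\]

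To close the argument, I would apply Jensen's inequality once more (using the finite mass $\int_0^T [\f(u)/u^\alpha]^2 \hspace{0.04cm}du$ from Condition~\ref{cond: main SPDE}(v) to normalize) in order to pass the $p/2$-th power inside the integral, then invoke Fubini to arrive at an estimate of the form
\[
\varphi(T_0) \leq C_2\big(1 + \sup_{x \in K}\|x\|^p_{H_2}\big) + C_3 \int_0^{T_0} \varphi(r) \hspace{0.04cm}dr, \qquad \varphi(t) \triangleq E\big[\sup_{s \leq t}\|Y_s\|^p_{H_2}\big],
\]
for all $T_0 \in [0, T]$, with constants independent of $x \in K$ and $P \in \cR(x)$. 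Gronwall's lemma then delivers the claim. The main technical obstacle is the stochastic convolution estimate, where the factorization method, BDG and the singular weight $\f(s)/s^\alpha$ must be combined carefully; the hypothesis $p > 1/\alpha$ is precisely what renders $(\alpha-1)q$ integrable after Hölder. A preliminary localization at the stopping times $\tau_n = \inf\{t \geq 0 \colon \|Y_t\|_{H_2} \geq n\} \wedge T$ ensures the a priori finiteness of $\varphi$ needed before Gronwall is applied, and monotone convergence concludes.
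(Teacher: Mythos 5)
Your proposal is correct and follows essentially the same route as the paper: the mild representation from Lemma \ref{lem: SPDE representation}, the Da Prato--Kwapie\'n--Zabczyk factorization of the stochastic convolution, H\"older with exponent \(p > 1/\alpha\) to tame the singular kernel, Burkholder plus the Hilbert--Schmidt bound from Condition \ref{cond: main SPDE}(v), and a localized Gronwall argument. The only cosmetic differences are that the paper uses Young's convolution inequality where you use Jensen with the normalized measure \([\f(u)/u^\alpha]^2\,du\), and it localizes at \(\rho_M\) and concludes with Fatou rather than monotone convergence.
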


Before we prove this lemma, we record another useful observation which is used in the proof.

\begin{lemma} \label{lem: SPDE representation}
	For every \(x \in H_2\) and \(P \in \cR(x)\), there exists a predictable map \(\g \colon \of 0, \infty\gs \to G\) such that, possibly on a standard extension of the filtered probability space \((\Omega, \cF, \mathbf{F}, P)\), the coordinate process \(X\) admits the dynamics
	\begin{align*}
	X_t = S_tx + \int_0^t S_{t-s} \mu (\g (s, X), X_s) ds + \int_0^t S_{t-s} \sigma (\g (s, X), X_s) d W_s, \quad t \in \bR_+,
	\end{align*}
	where \(W\) is a cylindrical standard Brownian motion.
\end{lemma}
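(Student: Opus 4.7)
The plan is to first extract a predictable control via Proposition \ref{prop: structure of R}, recognize the resulting martingale identities as the cylindrical (analytically weak) martingale problem for a semilinear SPDE, and finally invoke the standard equivalence between such solutions and mild solutions driven by a cylindrical Brownian motion on a standard extension.

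First I would check the hypotheses of Proposition \ref{prop: structure of R} for the data at hand. Setting
\[
\z^u (g, t, \omega) \triangleq \mathscr{L}^{u, g} (\omega (t)), \qquad u \in U, \ g \in G, \ (t, \omega) \in \of 0, \infty \of,
\]
parts (i) and (iii) of Condition~\ref{cond: main SPDE} ensure that $\z = (\z^u)_{u \in U}$ is a predictable Carath\'eodory function into $\bR^U$ (recall that $\omega \mapsto \langle y, \omega (t)\rangle_{H_2}$ is continuous on $\Omega = C(\bR_+; H_2)$ for each $y \in D \subset D(A^*)$), and the associated correspondence coincides with $\Theta$. Proposition \ref{prop: structure of R} then supplies a predictable map $\g \colon \of 0, \infty \of \to G$ such that for every $u \in U$ the process
\[
Y^u - Y^u_0 - \int_0^\cdot \mathscr{L}^{u, \g (s, X)} (X_s) \, ds
\]
is a local $P$-martingale.

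Next I would unpack this for the two families $u = \langle y, \cdot\rangle_{H_2}$ and $u = \langle y, \cdot\rangle_{H_2}^2$ with $y \in D$. The first gives that
\[
M^y_\cdot \triangleq \langle y, X_\cdot\rangle_{H_2} - \langle y, x\rangle_{H_2} - \int_0^\cdot \bigl( \langle A^* y, X_s\rangle_{H_2} + \langle y, \mu (\g (s, X), X_s)\rangle_{H_2} \bigr) ds
\]
is a local $P$-martingale, while the second, combined with It\^o's formula applied to $(M^y)^2$, identifies the quadratic variation of $M^y$ as $\int_0^\cdot \|\sigma^* (\g (s, X), X_s) y\|_{H_1}^2 \, ds$. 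Using that $D$ is dense in $D(A^*)$ for the graph norm (by construction, cf.\ \cite[Lemma 7.3]{criens22}) together with the linear growth from Condition \ref{cond: main SPDE}(iv) and a dominated convergence argument, these identities extend to all $y \in D(A^*)$. This is precisely the cylindrical martingale problem associated with the semilinear SPDE $dY_t = AY_t \, dt + \mu (\g, Y_t) \, dt + \sigma (\g, Y_t) \, dW_t$, $Y_0 = x$, in the sense of \cite{criens20, criens21}.

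Finally, I would invoke the equivalence between solutions of this cylindrical martingale problem and mild solutions of the corresponding SPDE. Under Condition~\ref{cond: main SPDE}(iv)--(v), the standard representation theorem for semilinear SPDEs, such as \cite[Theorem 2.5]{criens22}, provides, possibly on a standard extension of $(\Omega, \cF, \F, P)$, a cylindrical standard Brownian motion $W$ over $H_1$ so that $X$ satisfies the mild formulation stated in the lemma. The main technical obstacle is the last step: one must verify that the predictable control $\g$, being only measurable in time and $\omega$, is admissible as a coefficient in the stochastic convolution and that the Hilbert--Schmidt bound in Condition \ref{cond: main SPDE}(v) secures well-definedness of the convolution $\int_0^t S_{t-s} \sigma(\g, X_s) dW_s$; both are handled via a factorization argument driven by $\f$ and the exponent $\alpha \in (0, 1/2)$.
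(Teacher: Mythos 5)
Your proposal is correct and follows essentially the same route as the paper's proof: extract the predictable control $\g$ via Proposition \ref{prop: structure of R}, extend the local martingale property from $D$ to all of $D(A^*)$ by graph-norm density, and then invoke the equivalence of the resulting cylindrical (analytically weak) martingale problem with the mild formulation on a standard extension (the paper cites Ondrej\'at's equivalence theorem and the proof of \cite[Lemma 3.6]{criens21} for this last step). Your additional detail on identifying the quadratic variation from the $i=2$ test functions is exactly what is carried out inside the cited lemma, so no substantive difference remains.
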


\begin{proof}
		Take \(x \in H_2\) and \(P \in \cR (x)\). Thanks to Proposition~\ref{prop: structure of R}, there exists a predictable function \(\g \colon \of 0, \infty\of \hspace{0.05cm} \to G\) such that the processes
	\[
	u (X) - \int_0^\cdot \mathscr{L}^{u, \g (s, X)} (X_s) ds, \qquad u = \langle y, \cdot\, \rangle_{H_2}^i, y \in D, i = 1, 2, 
	\]
	are \(P\)-local martingales. Recall that \(D \subset D (A^*)\) has the property that for every \(y \in D(A^*)\) there exists a sequence \((y_n)_{n = 1}^\infty \subset D\) such that \(y_n \to y\) and \(A^* y_n \to A^* y\). Hence, as ucp (uniformly on compacts in probability) limits of continuous local martingales are again continuous local martingales (see \cite[Lemma~B.11]{CE}), we conclude that all processes of the form
	\begin{align} \label{eq: test pr SPDE rep}
	u (X) - \int_0^\cdot \mathscr{L}^{u, \g (s, X)} (X_s) ds, \quad u = \langle y, \cdot\, \rangle_{H_2}^i, y \in D (A^*), i = 1,2, 
	\end{align}
	are \(P\)-local martingales.
	Thanks to \cite[Theorem 13]{MR2067962}, in our setting the concepts of analytically weak and mild solutions are equivalent for the SPDE 
	\[
	d Y_t = A Y_t dt + \mu (\g (t, Y), Y_t) dt + \sigma (\g (t, Y), Y_t) d W_t,
	\]
	where \(W\) is a cylindrical standard Brownian motion.
	Using the \(P\)-local martingale properties of the processes from \eqref{eq: test pr SPDE rep}, the claim of the lemma now follows as in the proof of \cite[Lemma~3.6]{criens21}. We omit the details.
\end{proof}

\begin{proof}[Proof of Lemma \ref{lem: uni moment bound}]
	\emph{Step 1: Recap on the Factorization Method.} 
	In this step, let \(P\) be a probability measure on some filtered probability space which supports a cylindrical standard \(P\)-Brownian motion~\(W\) over the Hilbert space \(H_2\). Take \(1/p < \lambda \leq 1\) and set, for \(h \in L^p ([0, T]; H_2)\),
	\begin{align} \label{eq: R def}
	R_\lambda (h) (t) \triangleq \int_0^t (t - s)^{\lambda - 1} S_{t-s} h (s) ds, \quad t \in [0, T].
	\end{align}
	Notice that \(R_\lambda\) is well-defined, as, by H\"older's inequality,
	\begin{equation} \begin{split} \label{eq: fak bound 1}
	\int_0^t (t - s)^{\lambda - 1} &\|S_{t-s} h (s)\|_{H_2} ds 
	\\&\leq \Big(\int_0^T s^{p (\lambda - 1)/(p - 1)} \|S_{s}\|^{p/(p - 1)}_{L(H_2, H_2)} ds\Big)^{(p - 1)/p} \Big( \int_0^t \|h(s)\|^{p}_{H_2} ds \Big)^{1/p}.
	\end{split}
	\end{equation}
	This inequality shows that 
	\[
	\|R_\lambda h (t)\|_{H_2} \leq \Big(\int_0^T s^{p (\lambda - 1)/(p - 1)} \|S_{s}\|^{p/(p - 1)}_{L(H_2, H_2)} ds\Big)^{(p - 1)/p} \Big( \int_0^T \|h(s)\|^{p}_{H_2} ds \Big)^{1/p},
	\]
	which means that \(R_\lambda\) is a bounded linear operator on \(L^{p}([0, T]; H_2)\). The following lemma provides more properties of \(R_\lambda\), which turn out to be useful in Section \ref{sec: pf part (v)} below.
	\begin{lemma}[\textup{\cite[Proposition 1]{gatarekgoldys}}] \label{lem: R compact}
		The operator \(R_\lambda\) maps \(L^{p}([0, T]; H_2)\) into \(C([0, T]; H_2)\). Moreover, if the semigroup \(S\) is compact, then \(R_\lambda\) is compact.
	\end{lemma}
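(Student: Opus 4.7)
The strategy is the classical factorization argument going back to Da Prato and Zabczyk, applied to the operator $R_\lambda$ defined in \eqref{eq: R def}. I would split the statement into two parts: first continuity of $R_\lambda h$ for every $h \in L^p([0, T]; H_2)$, and then compactness of $R_\lambda$ as an operator from $L^p([0, T]; H_2)$ into $C([0, T]; H_2)$ under the extra hypothesis that $S$ is a compact semigroup.

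For continuity the plan is to verify the claim first on the dense subspace $C([0, T]; H_2) \subset L^p([0, T]; H_2)$. For continuous $h$ and $t \to t_0$, strong continuity of $S$ yields pointwise convergence of the integrand, which is dominated uniformly by the integrable function $s \mapsto (t \vee t_0 - s)^{\lambda - 1} M \|h\|_\infty$ with $M \triangleq \sup_{r \in [0, T]} \|S_r\|_{L(H_2, H_2)}$ (integrable since $\lambda > 0$). Dominated convergence plus a trivial bound on the small boundary piece over $[t_0 \wedge t, t_0 \vee t]$ then shows $R_\lambda h \in C([0, T]; H_2)$. Since the Hölder bound already displayed in \eqref{eq: fak bound 1} shows that $R_\lambda$ is a bounded operator from $L^p([0, T]; H_2)$ into $L^\infty([0, T]; H_2)$, and $C([0, T]; H_2)$ is dense in $L^p([0, T]; H_2)$, the image of $R_\lambda$ lies in the uniform closure of $C([0, T]; H_2)$, which is $C([0, T]; H_2)$ itself.

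For compactness I would invoke the Arzel\`a--Ascoli theorem, which demands (a) relative compactness of $\mathcal{A}_t \triangleq \{ R_\lambda h (t) \colon \|h\|_{L^p} \le 1 \}$ in $H_2$ for each $t$, and (b) equicontinuity of the family $\{ R_\lambda h \colon \|h\|_{L^p} \le 1 \}$ as $H_2$-valued curves. For (a), I fix $\epsilon \in (0, t)$ and split
\[
R_\lambda h (t) = S_\epsilon \int_0^{t - \epsilon} (t - s)^{\lambda - 1} S_{t - \epsilon - s} h (s) ds + \int_{t - \epsilon}^t (t - s)^{\lambda - 1} S_{t - s} h (s) ds.
\]
By the bound from \eqref{eq: fak bound 1} the first integral is contained in some fixed ball $B \subset H_2$, so the first term lies in $S_\epsilon (B)$, which is relatively compact because $S_\epsilon$ is. The second term has norm bounded by $C(\epsilon) \to 0$ as $\epsilon \to 0$, by the same Hölder estimate applied on the short interval (the constraint $\lambda > 1/p$ is exactly what ensures integrability of the resulting kernel). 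This gives total boundedness of $\mathcal{A}_t$. For (b), for $0 \le t' < t \le T$ and $\|h\|_{L^p} \le 1$ I would use the decomposition
\begin{align*}
R_\lambda h (t) - R_\lambda h (t')
&= \int_{t'}^t (t - s)^{\lambda - 1} S_{t - s} h (s) ds \\
&\quad + \int_0^{t'} \big[ (t - s)^{\lambda - 1} - (t' - s)^{\lambda - 1} \big] S_{t - s} h (s) ds \\
&\quad + (S_{t - t'} - \on{id}) R_\lambda h (t').
\end{align*}
Hölder handles the first integral uniformly, while convergence of the scalar kernel $(t - s)^{\lambda - 1} - (t' - s)^{\lambda - 1}$ to zero in $L^{p/(p - 1)}([0, t'])$ handles the second.

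The main obstacle is the third term: a $C_0$-semigroup is not uniformly strongly continuous on bounded subsets of $H_2$, only on relatively compact ones. This is precisely where compactness of $S$ becomes essential, and where part (a) does double duty. Indeed, (a) supplies the compact closure $K \subset H_2$ of $\{ R_\lambda h (t') \colon \|h\|_{L^p} \le 1 \}$, and on this compact set strong continuity of $S$ is automatically uniform, giving $\|(S_{t - t'} - \on{id}) R_\lambda h (t')\|_{H_2} \to 0$ uniformly in $h$. Hence equicontinuity holds, and Arzel\`a--Ascoli closes the proof.
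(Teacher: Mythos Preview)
Your proof sketch is correct and follows the standard factorization argument; this is essentially the proof given in the cited reference \cite[Proposition~1]{gatarekgoldys}. Note, however, that the paper does not supply its own proof of this lemma: it is stated as a direct restatement of that proposition, so there is nothing in the paper to compare your argument against beyond the citation itself.
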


	We now recall the factorization formula from \cite{DaPrato_fak}.
	Take \(\alpha \in (0, 1/2)\) and let \(p > 2\) be large enough such that \(1/p < \alpha\).
		Moreover, let \(\f\colon (0, T] \to [0, \infty]\) be a Borel function such that
	\[
	\int_0^T \Big[ \frac{\f (s)}{s ^\alpha} \Big]^2 ds < \infty,
	\]
	let \(\phi\) be a predictable \(L_1 (H_1, H_2)\)-valued process and let \(\psi\) be a real-valued predictable process such that
	\[
	\|S_t \phi_s \|_{L_2 (H_1, H_2)} \leq \f (t) |\psi_s|,
	\]
	for all \(t, s \in (0, T]\), and 
	\[
	E^P \Big[ \int_0^T |\psi_s|^p ds \Big] < \infty.
	\]
	The factorization formula given by \cite[Theorem~5.10]{DaPrato} shows that 
	\begin{align} \label{eq: fac fub}
	\int_0^t S_{t-s} \phi_s d W_s = \frac{\sin (\pi \alpha)}{\pi} R_\alpha (Y) (t), \quad t \in [0, T],
	\end{align}
	with
	\begin{align*} 
	Y_t \triangleq \int_0^t (t - s)^{- \alpha} S_{t-s} \phi_s d W_s,
	\end{align*}
	see also \cite[Step 0 in Section 4]{criens22} for more details. 
	By Eq. (4.4) in \cite{criens22}, it also holds that
	\begin{equation} \begin{split} \label{eq: fak bound 2}
	E^P \Big[ \int_0^T \|Y_t\|^{p}_E dt \Big] 
	\leq c_{p} \Big( \int_0^T \Big[\frac{\f (s) }{s^{\alpha}} \Big]^2 ds \Big)^{p/2}  E^P\Big[ \int_0^T |\psi_s|^p ds \Big],
	\end{split}
	\end{equation}
	where the constant \(c_p\) only depends on \(p\).
	In particular, \eqref{eq: fak bound 1} and \eqref{eq: fak bound 2} yield, for all \(r \in [0, T]\), that
	\begin{align} \label{eq: max ineq convolution}
	E^P \Big[ \sup_{t \in [0, r]} \Big\| \int_0^t S_{t - s} \phi_s d W_s \Big\|^p_{H_2} \Big] \leq \C E^P\Big[ \int_0^r |\psi_s|^p ds \Big],
	\end{align}
	where \(\C > 0\) is a constant which only depends on \(T, S, \f, \alpha\) and \(p\).
	
	\smallskip
	\emph{Step 2: A Gronwall Argument.} We are in the position to prove Lemma \ref{lem: uni moment bound}. Take \(x \in K\) and \(P \in \cR (x)\) and let \(\g\) be as in Lemma \ref{lem: SPDE representation}. Then, using the inequalities \eqref{eq: fak bound 1} and \eqref{eq: max ineq convolution}, for every \(M > 0\) and all \(r \in [0, T]\), we obtain 
	\begin{align*}
	E^P \Big[ \sup_{t \in [0, r \wedge \rho_M]} \|X_t\|^p_{H_2} \Big] &\leq \C \Big( \|x\|^p_{H_2} + E^P \Big[ \sup_{t \in [0, r \wedge \rho_M]} \Big\| \int_0^t S_{t - s} \mu (\g (s, X), X_s) ds \Big\|^p_{H_2} \Big]
	\\&\hspace{1.85cm} + E^P \Big[ \sup_{t \in [0, r \wedge \rho_M]} \Big\| \int_0^t S_{t - s} \sigma (\g (s, X), X_s) d W_s \Big\|^p_{H_2} \Big] \Big)
	\\&\leq \C\Big( \|x\|^p_{H_2} + E^P \Big[ \int_0^r \|\mu ( \g (s, X), X_{s \wedge \rho_M} )\|_{H_2}^p ds \Big] 
		\\&\hspace{1.85cm} + E^P \Big[ \sup_{t \in [0, r]} \Big\| \int_0^t S_{t - s} \sigma (\g (s, X), X_{s \wedge \rho_M}) d W_s \Big\|^p_{H_2} \Big] \Big)
		\\&\leq \C\Big( \|x\|^p_{H_2} + 1 + \int_0^r E^P \Big[ \sup_{s \leq t \wedge \rho_M} \|X_s\|^p_{H_2} \Big] dt \Big), 
		\end{align*}
		where the constant \(\C> 0\) depends only on the constant from Condition \ref{cond: main SPDE} (iv), \(T, S, \f, \alpha\) and \(p\).
		Finally, we conclude the claim of the lemma from Gronwall's and Fatou's lemma.
		The proof is complete.
\end{proof}

\subsection{Proof of part (iv) from Condition \ref{cond: sammel}}
By part (iii) of Condition~\ref{cond: main SPDE}, the map \((g, x) \mapsto (\mathscr{L}^{u, g} (x))_{u \in U}\) is continuous. Thus, thanks to \cite[Lemma~3.2]{CN22b}, for every \((t, \omega) \in \of 0, \infty\of\) and \(m \in \mathbb{N}\), the correspondences \(s \mapsto \Theta (\omega (s))\) and \(\alpha \mapsto \Theta (\alpha ([t, t + 1/m]))\) are continuous with compact values,
and, since \(\bR^U\) is a Fr\'echet space (\cite[p. 206]{charalambos2013infinite}), it follows also that \(\oconv \Theta (\omega ([t, t + 1/m]))\) is compact (\cite[Theorem 5.35]{charalambos2013infinite}). 
Finally, we deduce from \cite[Theorem 17.35]{charalambos2013infinite} that \(\omega \mapsto \oconv \Theta ([t, t + 1/m], \omega)\) is upper hemicontinuous for every \(t \in \bR_+\) and \(m \in \mathbb{N}\). This completes the proof.
\qed

\subsection{Proof of part (v) from Condition \ref{cond: sammel}} \label{sec: pf part (v)}
We adapt the compactness method from~\cite{gatarekgoldys}. Fix a finite time horizon \(T > 0\) and take \(p > 1/\alpha\), where \(\alpha\) is as in part (v) of Condition~\ref{cond: main SPDE}.
Let \(K \subset H_2\) be a compact set and, for \(R > 0\), define 
\begin{align*}
G_R \triangleq \Big\{ \omega &\in C ([0, T]; H_2)\colon \omega = S x_0 + R_1 (\psi) + \frac{\sin (\pi \alpha)}{\pi}R_\alpha (\phi), \ x_0 \in K, \\& \phi, \psi \in L^{p} ([0, T]; H_2)  \text{ with } \int_0^T \|\psi (s)\|_{H_2}^{p} ds \leq R, \ \int_0^T \|\phi (s)\|_{H_2}^{p} ds \leq R \Big\},
\end{align*}
where the operators \(R_1\) and \(R_\alpha\) are defined as in \eqref{eq: R def}.
For every \(t \in [0, T]\), the set \(\{S_t x_0 \colon x_0 \in K\}\) is compact, because \(S\) is a compact semigroup by Condition~\ref{cond: main SPDE}~(v). By \cite[Lemma I.5.2]{EN00}, the map
\(
[0, T] \times K \ni (t, x) \mapsto S_t x \in E
\)
is uniformly continuous. Thus, the Arzel\`a--Ascoli theorem (\cite[Theorem A.5.2]{Kallenberg}) yields that the set \(\{([0, T] \ni t \mapsto S_t x_0) \colon x_0 \in K\}\) is relatively compact in \(C([0, T]; H_2)\). Recall from Lemma~\ref{lem: R compact} that \(R_1\) and \(R_\alpha\) are compact operators from \(L^p ([0, T]; H_2)\) into \(C ([0, T]; H_2)\). Hence, for every \(R > 0\), the set \(G_R\) is relatively compact in \(C([0, T]; H_2)\). 

For any \(P \in \cR (x_0)\), by Lemma \ref{lem: SPDE representation} and the factorization formula \eqref{eq: fac fub}, possibly on an enlargement of the filtered probability space \((\Omega, \cF, \mathbf{F}, P)\), we have a.s.
\[
X_t = S_t x_0 + R_1 (\mu (\g, X)) (t) + \frac{\sin(\pi \alpha)}{\pi} R_\alpha (Y) (t), \quad t \in [0, T], 
\]
where
\[
Y_t \triangleq \int_0^t (t - s)^{- \alpha} S_{t - s} \sigma (\g (s, X), X_s) d W_s, \quad t \in [0, T].
\]
Using this observation and Chebyshev's inequality, we deduce from \eqref{eq: fak bound 1} and \eqref{eq: fak bound 2}, the linear growth assumptions on \(\mu\) and \(\sigma\), and Lemma~\ref{lem: uni moment bound}, that, for every \(\varepsilon> 0\), there exists an \(R > 0\) such that 
\begin{align*}
\sup_{x \in K} \sup_{P\in \cR(x)} P ( ([0, T] \ni t \mapsto X_t) \in G_R ) \geq 1 - \varepsilon.
\end{align*}
By virtue of \cite[Theorem 23.4]{Kallenberg}, since \(T > 0\) was arbitrary, we conclude that the set \(\bigcup_{x \in K} \cR(x)\) is tight and hence, relatively compact by Prohorov's theorem. The proof is complete. \qed

\section{Proof of Strong \(\usc_b\)--Feller property: Theorem \ref{theo: strong USC NSPDE}} \label{sec: pf strong USC}
The following proof adapts the idea from \cite{CN22c} that the strong \(\usc_b\)--Feller property can be deduced from the the strong Markov selection principle and a change of measure.
Throughout this proof, we fix a function \(\psi \in \usc_b (F; \bR)\) and a time \(T > 0\). W.l.o.g., assume that \(|\psi| \leq 1\).
We start with some auxiliary observations. 
Notice that Condition~\ref{cond: strong USC Feller NSPDE} implies Condition~\ref{cond: main SPDE}. Hence, by Corollary \ref{coro:SMSP NSPDE}, there exists a strong Markov family \(\{P_{(s, x)} \colon (s, x) \in \bR_+ \times H\}\) such that \(P_{(s, x)} \in \cK(s, x)\) and \(T_T (\psi) (x) = E^{P_{(0, x)}} [ \psi (X_T)]\). To simplify our notation, we set \(P_x \triangleq P_{(0, x)}\).
By Lemma \ref{lem: SPDE representation}, with some abuse of notation, for every \(x \in H\), there exists a predictable map \(\g^x \colon \of 0, \infty\of \, \to G\) and a cylindrical standard \(P_x\)-Brownian motion \(W\) such that \(P_x\)-a.s.
\begin{align*} 
X_t = S_tx + \int_0^t S_{t-s} \mu (\g^x  (s, X), X_s) ds + \int_0^t S_{t-s} d W_s, \quad t \in \bR_+.
\end{align*}
We define 
\[
Z^{P_x} \triangleq \exp \Big( -\int_0^\cdot \langle \mu (\g^x (s, X), X_s), d W_s \rangle_H - \frac{1}{2} \int_0^\cdot \| \mu (\g^x (s, X), X_s)\|^2_H ds \Big), 
\]
and, for \(M > 0\), we set 
\begin{align*}
\rho_M \triangleq \inf \{t \geq 0 \colon \|X_t\|_H \geq M\} \wedge M.
\end{align*}
For every \(M > 0\), \cite[Theorem 8.25]{jacod79} (or Novikov's condition \cite[Theorem 19.24]{Kallenberg}) yields that \(Z^{P_x}_{\cdot \wedge \rho_M}\) is a uniformly integrable \(P_x\)-martingale. 
We define a probability measure \(Q^M_x\) on \((\Omega, \cF)\) via the Radon--Nikodym derivative \(d Q^M_x / d P_x = Z^x_{\rho_M}\). 
Thanks to Girsanov's theorem (cf. \cite[Proposition~I.0.6]{roeckner15}), the process
\[
B^{M} \triangleq W + \int_0^{\cdot \wedge \rho_M} \mu (\g^x (s, X), X_s) ds 
\]
is a cylindrical standard \(Q_x^M\)-Brownian motion. Thus, under \(Q_x^M\), we have
\[
X_t = S_tx + \int_0^t S_{t - s} d B^{M}_s, \quad t  \leq \rho_M.
\]
Now, for every \(t \in [0, M)\), we obtain 
\begin{equation} \label{eq: OU bound}
\begin{split}
Q_x^M (\rho_M \leq t) &\leq \frac{1}{M} E^{Q_x^M} \Big[ \sup_{s \in [0, t \wedge \rho_M]} \|X_s\|_H \Big] 
\\&= \frac{1}{M} E^{Q_x^M} \Big[ \sup_{s \in [0, t \wedge \rho_M]} \Big\| S_s x + \int_0^s S_{s - r} B^{M}_r\Big\|_H \Big]
\\&\leq \frac{1}{M} E^{Q_x^M} \Big[ \sup_{s \in [0, t]} \Big\| S_sx + \int_0^s S_{s - r} B^{M}_r\Big\|_H \Big]
\\&= \frac{1}{M} E^{P_x} \Big[ \sup_{s \in [0, t]} \Big\|S_s x + \int_0^s S_{s - r} W_r\Big\|_H \Big] \to 0 \text{ as } M \to \infty.
\end{split}
\end{equation}
Hence,
\[
E^{P_x} \big[ Z^{P_x}_t \big] = \lim_{M\to \infty} E^{P_x} \big[ Z^{P_x}_t \1_{\{\rho_M > t\}} \big] = \lim_{M \to \infty} Q^M_x (\rho_M> t) = 1, 
\]
where we use the monotone convergence theorem for the first equality. This shows that \(Z^{P_x}\) is a true \(P_x\)-martingale.
Consequently, by a standard extension theorem (\cite[Lemma~19.19]{Kallenberg}), there exists a probability measure \(Q_x\) on \((\Omega, \cF)\) such that \(Q_x (G) = E^{P_x}[ Z_T^{P_x} \1_G ]\) for all \(G \in \mathcal{F}_T\) and \(T > 0\). Using again Girsanov's theorem (cf. \cite[Proposition~I.0.6]{roeckner15}), we obtain that the process
\[
B \triangleq W + \int_0^\cdot \mu (\g^x (s, X), X_s) ds 
\]
is a cylindrical standard \(Q_x\)-Brownian motion and, under \(Q_x\), 
\[
X_t = S_tx + \int_0^t S_{t - s} d B_s, \quad t \in \bR_+.
\]

Now, we are in the position to prove Theorem \ref{theo: strong USC NSPDE}. 
Let \((x^n)_{n = 0}^\infty \subset H\) be a sequence such that \(x^n \to x^0\) and fix \(\varepsilon > 0\). By virtue of \eqref{eq: OU bound}, there exists an \(M' > 0\) such that 
\begin{align} \label{eq: stopping time control 1}
\sup_{n \in \mathbb{Z}_+} Q_{x^n} (\rho_{M'} \leq T) \leq \varepsilon. 
\end{align}
Furthermore, by Lemma \ref{lem: uni moment bound}, there exists an \(M^\circ > 0\) such that 
\begin{align} \label{eq: stopping time control 2}
\sup_{n \in \mathbb{Z}_+} P_{x^n} (\rho_{M^\circ} \leq T) \leq \varepsilon. 
\end{align}
We set \(M \triangleq M' \vee M^\circ\). By the linear growth part of Condition \ref{cond: strong USC Feller NSPDE}, there exists a constant \(\C = \C_M > 0\) such that 
\[
\| \mu (g, x) \|_H \leq \C
\]
for all \(g \in G\) and \(x \in H \colon \|x\|_H \leq M\). 
Take \(\beta \in (0, T)\) and notice that, for all \(n \in \mathbb{Z}_+\),
\begin{align*}
	\big( E^{P_{x^n}} \big[ |1 - Z^{P_{x^n}}_{\beta \wedge \rho_M}| \big] \big)^2 &\leq E^{P_{x^n}} \big[ |1 - Z^{P_{x^n}}_{\beta \wedge \rho_M}|^2 \big]
	= E^{P_{x^n}} \big[ \big(Z^{P_{x^n}}_{\beta \wedge \rho_M}\big)^2 \big] - 1 
	\leq e^{\beta \C^2} - 1.
\end{align*}
Now, let \(\beta \in (0, T)\) small enough such that, for all \(n \in \mathbb{Z}_+\),
\begin{align} \label{eq: second moment Z control}
	E^{P_{x^n}} \big[ |1 - Z^{P_{x^n}}_{\beta \wedge \rho_M}| \big] \leq \varepsilon.
\end{align}
Define
\[
\Psi_\beta (x) \triangleq E^{P_{\beta, x}} \big[ \psi (X_{T}) \big], \quad x \in H.
\]
Then, for every \(n \in \mathbb{N}\), using the (strong) Markov property of \(\{P_{(s, x)} \colon (s, x) \in \bR_+ \times H\}\) and \eqref{eq: stopping time control 1}, \eqref{eq: stopping time control 2} and \eqref{eq: second moment Z control}, we obtain 
\begin{align*}
	\big| E^{P_{x^n}} \big[ \psi (X_T) \big] &- E^{P_{x^0}} \big[ \psi (X_T) \big] \big| 
	\\&= \big| E^{P_{x^n}} \big[ \Psi_\beta (X_\beta) \big] - E^{P_{x^0}} \big[ \Psi_\beta (X_\beta) \big] \big|
	\\&\leq \big| E^{P_{x^n}} \big[ \Psi_\beta (X_\beta) \1_{\{\rho_M > \beta\}} \big] - E^{P_{x^0}} \big[ \Psi_\beta (X_\beta) \1_{\{\rho_M> \beta\}}\big] \big| + 2 \varepsilon
	\\&\leq \big| E^{P_{x^n}} \big[ Z^{P_{x^n}}_\beta \Psi_\beta (X_\beta) \1_{\{\rho_M > \beta\}} \big] - E^{P_{x^0}} \big[ Z^{P_{x^0}}_\beta\Psi_\beta (X_\beta) \1_{\{\rho_M> \beta \}}\big] \big| 
	\\&\hspace{3.5cm} + E^{P_{x^n}} \big[ |1 - Z^{P_{x^n}}_{\beta \wedge \rho_M}| \big] + E^{P_{x^0}} \big[ |1 - Z^{P_{x^0}}_{\beta \wedge \rho_M}| \big]  + 2 \varepsilon
	\\&\leq \big| E^{Q_{x^n}} \big[ \Psi_\beta (X_\beta) \1_{\{\rho_M > \beta\}} \big] - E^{Q_{x^0}} \big[\Psi_\beta (X_\beta) \1_{\{\rho_M> \beta\}}\big] \big| 
	+ 4 \varepsilon
	\\&\leq \big| E^{Q_{x^n}} \big[ \Psi_\beta (X_\beta)\big] - E^{Q_{x^0}} \big[\Psi_\beta (X_\beta)\big] \big| 
	+ 6 \varepsilon.
\end{align*}
Thanks to \cite[Theorem 9.32]{DaPrato}, we have
\[
\big| E^{Q_{x^n}} \big[ \Psi_\beta (X_\beta) \big] - E^{Q_{x^0}} \big[ \Psi_\beta (X_\beta) \big] \big| \to 0
\]
as \(n \to \infty\). 
Hence, there exists an \(N \in \mathbb{N}\) such that, for all \(n \geq N\), 
\begin{align*}
	\big| E^{P_{x^n}} \big[ \psi (X_T) \big] - E^{P_{x^0}} \big[ \psi (X_T) \big] \big| \leq 7 \varepsilon.
\end{align*}
We conclude that the map \(x \mapsto E^{P_x} [ \psi (X_T) ]\) is continuous. This finishes the proof of Theorem \ref{theo: strong USC NSPDE}.
\qed

\section*{Acknowledgment}
The author thanks Lars Niemann for many interesting discussions. Moreover, helpful comments of an anonymous referee are gratefully acknowledged. 

\appendix

\section{Some facts on the space of L\'evy measures}
As in Section \ref{sec: nonlinear levy}, let \(\mathcal{L}\) be the space of all L\'evy measure on \(\bR^d\), i.e., the space of all measures \(\K\) on \((\bR^d, \mathcal{B}(\bR^d))\) such that 
\[K(\{0\} = 0 \quad \text{ and } \quad \int (1 \wedge \|x\|^2) \K (dx) < \infty.\] 
We endow \(\mathcal{L}\) with the weakest topology under which all maps \[
\K \mapsto \int f(x) (1 \wedge \|x\|^2) \K (dx), \qquad  f \in C_b (\bR^d; \bR),\] are continuous. The following lemma shows that this topology has a convenient structure.  It extends \cite[Lemma 2.3]{neufeld2014measurability}, where \(\mathcal{L}\) is shown to be a separable metrizable space.
\begin{lemma} \label{lem: Levy polish}
	The space \(\mathcal{L}\) is a Polish space. 
\end{lemma}
\begin{proof}
	The proof is split into two steps. 
	
	\smallskip
	{\em Step 1.}
	Let \(\mathcal{M}^f (\bR^d)\) be the set of all finite measures on \((\bR^d, \mathcal{B}(\bR^d))\) endowed with the weak topology. This space is Polish by \cite[Theorem 1.11]{prok}. We now prove that
	\[
	\mathcal{M}^f_0 (\bR^d) \triangleq \big\{ K \in \mathcal{M}^f (\bR^d) \colon K (\{0\}) = 0 \big\}
	\]
	is a \(G_\delta\) subset of \(\mathcal{M}^f (\bR^d)\) and consequently, a Polish subspace. 
	Let \((g_n)_{n = 1}^\infty\) be a sequence of continuous functions from \(\bR^d\) into \([0, 1]\) such that \(g_n \searrow \1_{\{0\}}\). It is well-known that such a sequence exists. Now, set 
	\[
	H \triangleq \bigcap_{n = 1}^\infty \bigcup_{m = 1}^\infty \Big\{ K \in \mathcal{M}^f (\bR^d) \colon \int g_m d K < 1/n \Big\}.
	\]
	The map \(K \mapsto \int g_m d K\) is continuous by the definition of the weak topology. Thus, \(\{K \in \mathcal{M}^f (\bR^d) \colon \int g_n d K < 1/m\}\) is open and consequently, \(H\) is a \(G_\delta\) set. We now explain that 
	\(
	 \mathcal{M}^f_0 (\bR^d) = H.
	\)
	If \(K \in \mathcal{M}^f_0 (\bR^d)\), then 
	\[
	\lim_{m \to \infty} \int g_md K = K (\{0\}) = 0,
	\]
	by the dominated convergence theorem. Hence, \(K \in H\). 
	Conversely, take \(K \in H\). For every \(n \in \mathbb{N}\), there exists an \(m \in \mathbb{N}\) such that 
	\(
	\int g_m d K < 1/n.
	\)
	As \(k \mapsto g_k\) decreases, we get 
	\(
	\int g_k d K < 1/n
	\)
	for all \(k \geq m\). This proves that \(\lim_{m \to \infty} \int g_m d K = 0\) and it follows from the dominated convergence theorem that \(K (\{0\}) = 0\). In summary, \(K \in  \mathcal{M}^f_0 (\bR^d)\), which completes the proof of \(\mathcal{M}^f_0 (\bR^d) = H\).
	
	\smallskip
	{\em Step 2.}
	Let \(\mathsf{d}\) be a metric that induces the weak topology on \(\mathcal{M}^f_0 (\bR^d)\). Then, 
	\[
	(K_1, K_2) \mapsto \mathsf{d}_{\mathcal{L}} (K_1, K_2) \triangleq \mathsf{d} ( g d K_1, g d K_2), \quad g (x) \triangleq 1 \wedge \|x\|^2, 
	\]
	induces the topology of \(\mathcal{L}\). 
	In other words, the function
	\(
	K \mapsto g d K
	\)
	is an isometry between \(\mathcal{L}\) and \(\mathcal{M}^f_0 (\bR^d)\). Further, it is a bijection whose inverse is given by 
	\(
	K \mapsto \1_{\bR^d \backslash \{0\}} dK / g.
	\)
	Hence, \(\mathcal{L}\) and \(\mathcal{M}^f_0 (\bR^d)\) are isometric, which implies that \(\mathcal{L}\) is a Polish space.
\end{proof}

We also identify a class of continuous functions on \(\mathcal{L}\).
\begin{lemma} \label{lem: continuity in L}
	Let \(g \colon \mathbb{R}^d \to \mathbb{R}\) be a continuous function such that \(|g (x)| \leq C (1 \wedge \|x\|^2)\) for some constant \(C > 0\). Then, the map 
	\[
	K \mapsto \int g (x) K (dx)
	\]
	is continuous from \(\mathcal{L}\) into \(\mathbb{R}\). 
\end{lemma}
\begin{proof}
	
	Suppose that \(\K^n \to \K^0\) in \(\mathcal{L}\). Then, by definition of the topology on \(\mathcal{L}\), \(G^n \triangleq (1 \wedge \|x\|^2) K^n (dx) \to (1 \wedge \|x\|^2) K^0 (dx) \triangleq G^0\) in \(\mathcal{M}^f (\bR^d)\). As \(G^0 (\{0\}) = 0\), the function 
	\[
	f (x) \triangleq \begin{cases} g (x) / (1 \wedge \|x\|^2), & x \not = 0, \\ 0, & x = 0, \end{cases}
	\]
	is bounded and \(G^0\)-a.e. continuous. Hence, the continuous mapping theorem for \(\mathcal{M}^f (\bR^d)\) (see \cite[Theorem~1.8] {prok}) yields that 
	\(
	\int f \, d G^n \to \int f \, d G^0.
	\)
	By definition of \(f\), this yields the claim. 
\end{proof}

\end{document}